\newtheorem{theorem}{Theorem}[section]
\newtheorem{prop}[theorem]{Proposition}
\newtheorem{lm}[theorem]{Lemma}
\newtheorem{conj}[theorem]{Conjecture}
\theoremstyle{definition}
\newtheorem{defn}[theorem]{Definition}
\newtheorem{rmk}[theorem]{Remark}
\newtheorem{eg}[theorem]{Example}
\newcommand{\ul} \underline
\newcommand{\mtx}{\left[ \begin{matrix}}
\newcommand{\mtxend}{\end{matrix}\right]}
\newcommand{\A}{\mathcal A}
\newcommand{\G}{\mathcal G}
\newcommand{\C}{\mathcal C}
\newcommand{\Z}{\mathcal Z}
\newcommand{\E}{\mathcal E}
\DeclareMathOperator{\<}{\prec}
\definecolor{red}{rgb}{0.8,0,0}
\definecolor{green}{rgb}{0,0.8,0}
\let\oldmarginpar\marginpar
\renewcommand\marginpar[1]{\-\oldmarginpar[\raggedleft\footnotesize #1]%
{\raggedright\footnotesize #1}}
\begin{document}

\title{Algebraic Statistics for a Directed Random Graph Model with Reciprocation}\thanks{Supported in part by  NSF Grant DMS-0631589  to  Carnegie Mellon University.}

\author{Sonja Petrovi\'c}
\address{Department of Mathematics, Statistics, and Computer Science\\
University of Illinois, Chicago, IL, 60607}
\email{\tt petrovic@math.uic.edu}

\author{Alessandro Rinaldo}
\address{Department of Statistics\\
Carnegie Mellon University, Pittsburgh, PA 15213}
\email{\tt arinaldo@stat.cmu.edu}

\author{Stephen E. Fienberg}
\address{Department of Statistics and Machine Learning Department\\
 Carnegie Mellon University, Pittsburgh, PA 15213}
\email{\tt fienberg@stat.cmu.edu}

\date{}

%\dedicatory{To Paul Holland, whose work on contingency tables and network models continues to provide us with research ideas.}
\maketitle

\begin{abstract}
The $p_1$ model is a directed random graph model used to describe  dyadic interactions in a social network  in terms of  effects due to differential attraction (popularity) and expansiveness, as well as an additional effect due to reciprocation.  In this article we carry out an algebraic statistics analysis of this model. We show that the $p_1$ model is a toric model specified by a multi-homogeneous ideal. We conduct an extensive study of the Markov bases for $p_1$ models that incorporate explicitly the constraint arising from multi-homogeneity. We consider the properties of the corresponding  toric variety and relate them to the conditions for existence of the maximum likelihood and extended maximum likelihood estimator. Our results are directly relevant to the estimation and conditional goodness-of-fit testing problems in $p_1$ models. 
\end{abstract}

 \tableofcontents

\vskip0.2in
\section{Introduction}
\label{section:introduction}

The study of random graphs is an active area of research in many fields, including mathematics, probability, biology and social sciences. In the social network literature, the nodes of the network represent population units and the edges represent relationships among individuals. Thus, prescribing a probability distribution over a network is essentially a way of encoding the dynamic of interactions and behavior among individuals in a given population.  For a review, see Goldenberg et al.~\cite{GZAF:2009}.

One of the earlier and most popular statistical models for social networks is the $p_1$ model of Holland and Leinhardt, described in their 1981 seminal paper \cite{holl:lein:1981}. In a $p_1$ model 
the network is represented as a digraph in which, for every pair of nodes $(i,j)$, or dyad, one can observe one of four possible dyadic configurations: a directed edge from $i$ to $j$, a directed edge from $j$ to $i$, two directed edges and no edges between $i$ and $j$. Each dyad is in any of these configurations independently of all the other dyads. Quoting from \cite{holl:lein:1981}, the probability distribution over the entire network depends on various parameters quantifying the ``attractiveness" and "productivity'' of the individual nodes, the ``strength of reciprocity" of each dyad and the network ``density,"  or overall tendency to have edges. See the next section for details.   

Despite its simplicity and interpretability, and despite being a special case of the broader and well understood class of log-linear models, e.g., see~\cite{fien:wass:1981a,fien:meye:wass:1985}, the $p_1$ model poses extraordinary challenges of both theoretical and practical nature. Indeed, first and foremost, the number of network parameters depends on the size of the network, so that, as the population size grows, the model complexity also increases. This remarkable feature renders the $p_1$ model, as well as virtually any other network model, unlike traditional statistical models (including log-linear models), whose complexity is fixed and independent of the sample size.  Second, statistical inference for the $p_1$ model is typically made on the basis of {\it one} realization of the network, which amounts to one observation per dyad.   Because of the unavoidable sparsity and low information content of the observable data, and because of the dependence of the model complexity on the sample size, classical statistical procedures for estimation and goodness-of-fit testing in $p_1$ models exhibit properties that are poorly understood and are likely to be suboptimal or, at the very least, used inadequately.

In this article, we revisit the Holland-Leinhardt $p_1$ model using the tools and language  of algebraic geometry as is customary in the area of research known as algebraic statistics. See, e.g., \cite{diac:stur:1998}, \cite{pist:ricco:wynn:2001} and \cite{drto:stur:sull:2009}.
 For the class of $p_1$ models at hand, we identify the probabilities of each of the $2{n(n-1)}$  possible dyadic configurations with the indeterminates of a multi-homogeneous toric ideal. Our first goal is to investigate  the Markov bases for $p_1$ by computing a minimal generating set of this ideal first and then by removing basis elements that violate the condition of one observation per dyad. To our knowledge, this is the first time in the Markov bases literature that sampling constraints of this form, known in statistics as product Multinomial sampling schemes, have been directly incorporated in the study of Markov bases. Our results prescribe a set of rules for creating Markov moves that are applicable to network data and offer significant efficiency improvements over existing methods for generating Markov bases.  Our second goal is to study the toric variety associated to the $p_1$ ideal and to relate its geometric properties with the conditions for existence of the maximum likelihood estimator of the dyadic probabilities. 
 
The paper is organized as follows. In Section \ref{sec:p1.model} we describe in detail the $p_1$ model, its statistical properties and the statistical tasks of estimation and testing that motivate our work. In particular, we consider three different flavors of the $p_1$ model, of increasing complexity. In section \ref{sec:markov} we embark on an exhaustive study of the Markov bases for $p_1$ models. As we mentioned above, the %fact that the 
 constraint that there is only one observation per %dyadic
 dyad   makes our analysis unusually difficult. In Section \ref{sec:mle} we describe the geometric properties of the $p_1$ model and we give explicit conditions for the existence of the MLE.

%%%%%%%%%%%%%%%%%%%%%%%%%%%%%%%%%%%%%%%%%%%%%%%%%%%%%%%%%%%%%%%%%%%%%%%%%%%%%%%%%%%%
\section{The Holland-Leindhardt  $p_1$ Model}\label{sec:p1.model}

We consider a directed graph on the set of $n$ nodes. The nodes correspond to units in a network, such as individuals, and the edges correspond to links
between the units.  We focus on dyadic pairings and keep track of whether node $i$ sends an edge to $j$, or vice versa, or none, or both.   Let $p_{i,j}(1,0)$ be the probability of $i$ sending an edge toward $j$, and let $p_{j,i}(0,1)$ the probability of $j$ sending an edge toward $i$ (thus, $1$ denotes the outgoing side of the edge).  Further,  
$p_{ij}(1,1)$ is the probability of $i$ sending an edge to $j$ {\emph{and}}  $j$ sending an edge to $i$, while $p_{i,j}(0,0)$ is the probability that there is no edge between $i$ and $j$. Thus, 
\begin{equation}\label{eq:sum.1}
p_{i,j}(0,0) + p_{i,j}(1,0) + p_{i,j}(0,1) + p_{i,j}(1,1) = 1,
\end{equation}
for each of the ${n \choose 2}$ pairs of nodes $(i,j)$.
%To describe the parametrization of the model, we denote the effect of an outgoing edge from $i$ by $\alpha _i$, the effect of an incoming edge into $j$ by $\beta _j$, and the added effect of reciprocated edges (in both directions) as  $\rho_{ij}$. 
%
The Holland-Leinhardt  $p_1$ model of interest is given as follows (see \cite{holl:lein:1981}):
\begin{equation}\label{eq:hl.eqs}
\begin{array}{rcl} 
    \log p_{ij}(0,0) &= &\lambda _{ij}\\
    \log p_{ij}(1,0) &= & \lambda _{ij} + \alpha_i + \beta_j + \theta \\ 
    \log p_{ij}(0,1) &= & \lambda _{ij} + \alpha_j + \beta_i + \theta \\
    \log p_{ij}(1,1) &= & \lambda _{ij} + \alpha_i + \beta_j + \alpha_j + \beta_i + 2\theta + \rho_{i,j}.
\end{array}
\end{equation}
The real-valued variables $\theta$, $\alpha_i$, $\beta_i$, $\rho_{i,j}$ and $\lambda_{i,j}$ for all $i< j$ are the {\it model parameters}. The parameter $\alpha_i$ controls the effect of an outgoing edge from $i$, the parameter $\beta_j$ the effect of an incoming edge into $j$, and  $\rho_{i,j}$ the added effect of reciprocated edges (in both directions). The ``density" parameter $\theta$ quantifies the overall propensity of the network to have edges. Finally, $\lambda_{ij}$ is just a normalizing constant to ensure that \eqref{eq:sum.1} holds for each each dyad $(i,j)$. See the original paper on $p_1$ model by \cite{holl:lein:1981} for an extensive interpretation of the model parameters.

We take note here of a basic yet rather fundamental feature of our settings that apply to $p_1$ models as well as to many other statistical models for network: data become available in the form of {\it one} observed network. Even though for each pair of nodes $(i,j)$, the four probabilities $p_{i,j}(\bullet,\bullet)$ are strictly positive according to the defining equations \eqref{eq:hl.eqs}, {\it each dyad can be observed in one and only one of the 4 possible states.} Thus, despite the richness and complexity of $p_1$ models, their statistical analysis is severely limited by the disproportionally small information content in the data. This  fact makes $p_1$ and, more generally, network models challenging, and it greatly affects our analysis, as we will show.

We study the following special cases of the general $p_1$ structure:
\begin{enumerate}
  \item  $\rho_{ij}=0$, {\it no reciprocal effect}.
    \item $\rho_{ij}=\rho$, {\it constant reciprocation}.
    \item $\rho_{ij}=\rho + \rho_i+\rho_j$, {\it edge-dependent reciprocation}.
\end{enumerate}

%\section{Algebraic statistics representation of $p_1$ model (FIND A BETTER NAME)}\label{sec:intro.stat}

For a network on $n$ nodes, we represent the vector of $2n(n-1)$ dyad probabilities as
\[
p=(p_{1,2},p_{1,3},\ldots,p_{n-1,n}) \in \mathbb{R}^{2n(n-1)},
\]
where, for each $i <j$, $p_{i,j} = \left( p_{i,j}(0,0), p_{i,j}(1,0) ,p_{i,j}(0,1),p_{i,j}(1,1) \right) \in \Delta_3$, with $\Delta_3$ the standard simplex in $\mathbb{R}^4$.

As usual in algebraic statistics, the $p_1$ \emph{model} is defined to be the set of all candidate probability distributions that satisfy the Holland-Leinhardt equations \eqref{eq:hl.eqs}.
By definition, the $p_1$ model is log-linear; that is, the set of candidate probabilities $p$ is such that $\log p$ is in the linear space spanned by the rows of a
 matrix $A$, which is often called the \emph{design matrix of the model}. 
  Indeed,  the design matrix encodes a homomorphism between
 two polynomial rings:
\begin{align*}
	\varphi_n: \mathbb C[p_{ij}(a,b) : i<j\in\{1\dots n\},a,b\in\{0,1\}]\to \mathbb C[\lambda_{ij},\alpha_i,\beta_i,\theta,\rho_{ij} : i<j\in\{1,\dots,n\}].
\end{align*}
 induced by 
\begin{align*}
	p_{ij}(a,b) \mapsto \lambda_{ij} \alpha_i^{a} \alpha_j^{b} \beta_i^{b} \beta_j^{a} \theta^{a+b} \rho_{ij}^{\min(a,b)}
\end{align*}
where $a,b\in\{0,1\}$, and we consider parameters  
$\lambda_{ij}$, $\alpha_i$, $\beta_i$, $\rho_{ij}$ and $\theta$ for $i,j\in\{1,\dots,n\}$
as unknowns.
The design matrix $A$ simply describes the action of $\varphi_n$ on the probabilities $p_{ij}(\bullet,\bullet)$.  The columns of
$A$ are indexed by $p_{ij}$'s and its rows by the model parameters. The entries of 
the design matrix are either $0$ or $1$; there is a $1$  
in the $(r,c)$-entry of the matrix if the parameter indexing row $r$ appears in the image of the $p_{ij}$ indexing the column $c$.  
For example, in the case $\rho_{ij}=0$, the matrix $A$ is of size $(2n)\times (3{{n}\choose {2}})$.  
We will create the design matrices 
in a systematic way:
the rows will always be indexed by 
$\lambda_{ij},\alpha_1,\dots,\alpha_n$, $\beta_1,\dots,\beta_n$, $\theta$, $\rho_{ij}$ 
lexicographically in that order. The columns will be ordered in the following way:
first fix $i$ and $j$ in the natural lexicographic ordering; then, within each set, vary the edge directions in this order: $(0,0)$, $(1,0)$, $(0,1)$, $(1,1)$. Examples can be found in section \ref{section:small-Markov}. 
Furthermore, it is easy to see that the design matrix for the network on $n$ nodes will consist of several copies of the $2$-node matrix, placed as a submatrix of in those rows and columns corresponding to all $2$-node subnetworks.

Let $\zeta$ denote the vector of model parameters, whose dimension $d$ depends on the type of restrictions on the $p_1$. Then, using the design matrix $A$, one can readily verify that the Holland-Leinhardt equations have  the log-linear representation 
\[
\log p = A^\top \zeta,
\]
or, equivalently, letting $a_k$ and $p_k$ be the $k$-th column of $A$ and the $k$-th entry of $p$, respectively,
\[
p_k = \prod_{l=1}^d a_k(l)^{\zeta(l)}.
\]

\subsection{Algebraic Statistical Challenges of $p_1$ Models}

We now describe two fundamental statistical tasks for the analysis of $p_1$ models that, both in theory and practice, still remain in part unsolved.

Denote by $M_A$ the $p_1$ model, that is, the set of points satisfying the Holland-Leinhardt equations \eqref{eq:hl.eqs}. Notice that $M_A$ is a relatively open set in the positive orthant of $\mathbb{R}^{2n(n-1)}$ of dimension  $\mathrm{rank}(A)$.  Let $\mathcal{X}_n \subset \mathbb{R}^{2n(n-1)}$ be the set of all observable networks on $n$ nodes, \textit{the sample space.} Notice that $|\mathcal{X}_n| = 4^{n(n-1)}$.  %Consistently with the notation used so far, we will write every $x \in \mathcal{X}_n$ as
We will write every point $x$ in the sample space $\mathcal X$ as
\[
x = (x_{1,2},x_{1,3},\ldots,x_{n-1,n}), 
\]
where each of the  ${n \choose 2}$ subvectors $x_{i,j}$ is a vertex of $\Delta_3$. 

Probabilistically, each $x_{i,j}$ is the realization of a random vector in $\mathbb{R}^4$ having Multinomial distribution with size $1$ and class probabilities $p^0_{i,j} \in \Delta_3$, where
\[
p^0 = (p^0_{1,2},p^0_{1,3},\ldots,p^0_{n-1,n})
\]
is an unknown vector in $M_A$ (thus, $p^0 > 0)$. Furthermore, \eqref{eq:hl.eqs} implies the Multinomial vectors representing the dyad configurations are mutually independent.

Once a network $x \in \mathcal{X}_n$ has been observed, statisticians are interested in the following interrelated fundamental problems:
\begin{itemize}
\item estimation problem: to estimate the unknown vector $p^0 \in M_A$;
\item goodness-of-fit problem: to test whether the $p_1$ model $M_A$ can be considered appropriate, i.e. whether the estimate produced in part (a) is a good fit to the observed data $x$.
\end{itemize}

To tackle the first problem, the most commonly used method is maximum likelihood estimation, which we now describe. For a fixed point $x \in \mathcal{X}_n$, the {\it likelihood function} $\ell \colon M_A \rightarrow [0,1]$ given by
\[
\ell_x(p) = \prod_{i<j} \left( \prod_{k=1}^4 p_{i,j}(k)^{x_{i,j}(k)}\right)
\]
returns the probability of observing the network $x$ as a function of the Multinomial probabilities $p \in M_A$. The {\it maximum likelihood estimate}, or MLE, of $p^0$ is 
\begin{equation}\label{eq:MLE}
\hat{p} = \mathrm{argmax}_{p \in M_A} \ell_x(p),
\end{equation}
i.e. the vector of Multinomial probabilities in $M_A$ that makes $x$ the most likely network to have been observed. Despite $\ell_x$ being smooth and convex for each $x \in \mathcal{X}_n$, it may happen that for some $x$, the supremum of the $\ell_x$ is achieved on the boundary of $M_A$. In this case, $\hat{p}$ does not belong to $M_A$, since it will have some zero coordinates. In this case, the MLE is said to be non-existent. Indeed, if a vector $p$ with zero entries is to satisfy the Holland and Leinhardt equations, then some of the model parameters must be equal to $-\infty$.

While the estimation problem is relatively well constrained, the problem of testing for goodness of fit has a much broader scope, as it entails testing whether the assumption that $p^0 \in M_A$ is itself appropriate. If it turns out, based on available data, that this assumption is rejected, than the next step is to further test that a different, typically more complex, $p_1$ model $M_{A'} \supset M_A$ is appropriate. In extreme cases, one may be led to conclude that no  $p_1$ model is adequate given the observed network. 
A typical goodness-of-fit testing proceeds through the following steps:

\begin{enumerate}
\item Compute the MLE $\hat{p}$ as defined in \eqref{eq:MLE}.
\item Compute the goodness-of-fit statistic $GF(x)$. This quantity measures how close the MLE is to the  observed network, or, using a statistical jargon, ``how well the model $M_A$ fits the data $x$." Among the most commonly used statistics are Pearson's $\chi^2$ and the likelihood ratio statistic:
\[
\sum_{i<k}\sum_{k=1}^4 \frac{\left( \hat{p}_{i,j}(k) - x_{i,j}(k) \right)^2}{\hat{p}^2_{i,j}(k)} \quad \text{and} \quad \sum_{i<k}\sum_{k=1}^4 x_{i,j}(k) \log \left( \frac{x_{i,j}(k)}{\hat{p}_{i,j}(k)} \right),
\]
respectively.
\item Reject the assumption that $p^0 \in M_A$ if the goodness-of-fit statistic used in step (2) is ``too large."
\end{enumerate}

Step (3) above is clearly the most delicate, as there is no generally valid recipe for deciding when $GF(x)$ is too large, one of the reasons being that the goodness-of-fit statistic is itself a random variable. In many applications, it is customary to declare a model a \emph{poor fit} if $G(x)$ is greater than a given deterministic threshold (depending on the size of the network and the number $d$ of model parameters), which is obtained based on asymptotic approximations. Despite their widespread use, for the present problem these approximations have no theoretical justification (see, e.g., \cite{haberman:81}) and can, in fact, be very poor. 

Markov bases provide an alternative, non-asymptitic approach to performing goodness-of-fit tests and model selection. Their use has gained some momentum and popularity in recent years.
Let $t = Ax$ denote the vector of margins, or {\it sufficient statistics}, corresponding to the observed network. Let $\mathcal{T}_t = \{ x' \in \mathcal{X}_n \colon A x' = t\} \subset \mathcal{X}_n$ be the fiber of $t$. From statistical theory (see, e.g., \cite{bfh:07}), all the networks in $\mathcal{X}_n$ belonging to the same fiber will produce the same MLE. Thus, from the inferential standpoint, all the networks in the same fiber are equivalent. Based on this, if $x$ is the observed network and $t$ the associated sufficient statistic, one can decide whether the model $M_A$ provides a good fit if the quantity
\begin{equation}\label{eq:alpha}
\alpha_x = \frac{|\{x' \in \mathcal{T}_t \colon GF(x') > GF(x) \}|}{|\mathcal{T}_t|}
\end{equation}
is large. Notice that $\alpha_x$ is the fraction of networks in the fiber at $t$ whose goodness-of-fit statistic value is larger than the value at the true network $x$. Heuristically, if $\alpha_x$ is large, i.e. closer to $1$, then  $\hat{p}$ is closer to the observed network than most of the other points in the same fiber, thus implying that the model fits really well.

Despite its appeal, this methodology is rarely feasible due to the high computational costs of determining the fiber.
Thus, rather than computing the fiber exactly, one can attempt to estimate $\alpha_x$ by performing a sufficiently long random walk over $\mathcal{F}_t$, as follows. A Markov basis for $M_A$ is set of vectors  $\mathcal{M} = \{f_1,\ldots,f_M \} \subset \mathbb{Z}^{2n(n-1)}$ such that
\begin{enumerate}
\item[(i)] $\mathcal{M} \subset \mathrm{kernel}(A)$;
\item[(ii)] for each observable margin $t$ and each $x,x' \in \mathcal{T}_t$, there exists a sequence $(\epsilon_1,f_{i_1}), \ldots, (\epsilon_N,f_{i_N})$ (with $N$ depending on both $x$ and $x'$), such that $\epsilon_j \in \{-1,1\}$ for all $j$ and
\[
x' = x + \sum_{j=1}^N \epsilon_j f_{i_j}, \quad x + \sum_{j=1}^a \epsilon_j f_{i_j} \in \mathcal{X}_n \; \text{ for all } \; a=1,\ldots,N.
\]
\end{enumerate}
Notice that Markov bases are not unique. If a Markov basis is available, then, for any observable network $x$ with sufficient statistics $t = Ax$, it is possible to estimate $\alpha_x$ by walking at random inside the fiber $\mathcal{T}_t$ according to the following algorithm: 
\begin{enumerate}
\item set k=0 and set $x^{\mathrm{old}} = x$;
\item at every step, starting from the current position $x^{\mathrm{old}}$,
\begin{enumerate}
\item randomly choose a vector $f \in \mathcal{M}$ and a number $\epsilon \in \{-1,1\}$;
\item   if $x^{\mathrm{old}} + \epsilon f \in \mathcal{X}_n$, move to $x^{\mathrm{new}} = x^{\mathrm{old}} + \epsilon f \in \mathcal{X}_n$, otherwise stay at $x^{\mathrm{new}} = x^{\mathrm{old}}$;
\end{enumerate}
\item if $GF(x^{\mathrm{new}})$ is larger than $GF(x)$ set k = k+1;
\item repeat the steps (2) and (3)  $K$ times.
\end{enumerate}

Provided that the algorithm is run for a sufficiently long time (i.e. $K$ is large enough), the number $\frac{k}{K}$ is an accurate estimate of $\alpha_x$. See \cite{diac:stur:1998} for details and variants of the basic Markov basis algorithm described above.

The goal of this article is to investigate the algebraic and geometric properties of $p_1$ models that are relevant to the estimation and goodness-of-fit testing problems. 
In Section \ref{sec:markov}, we study the toric ideal $I_A$ to construct Markov bases for $p_1$ models that are applicable to points belonging to the sample space $\mathcal{X}_n$. 
From the geometric side, in Section \ref{sec:mle} we turn our attention to the (closure of the) set $M_A$, which is a subset of the toric variety of $I_A$, and show its relevance and for the maximum likelihood estimation problem.

\section{Markov Bases of $p_1$ Models}
\label{sec:markov}
 
In this section we study the properties of Markov bases for the three versions of the $p_1$ model described in section \ref{sec:p1.model}.

Our analysis presents certain complications that sets it apart from most of the existing literature on Markov bases.
Indeed, the traditional algebraic geometry machinery generates Markov bases that are ``universal", in the sense of depending only on the design matrix $A$, and not the sample space and its properties. The reason for this is that Markov bases are obtained as generating sets toric ideals, and thus they cut out a complex toric variety; while the model itself lies in the positive real part of the variety. As a result, Markov bases tend to be very large even for design matrices of moderate size. In contrast, as we noted above, the sample space for network data is very highly constrained, since each dyad can only be observed in one and only one of the four possible configurations. Consequently, many of the basis elements of any Markov bases are not applicable, since they violate this basic constraint of the data.  Thus, once we find the Markov bases, we still need to be careful in identifying what elements are relevant to our data and in removing the moves that are not applicable.
To our knowledge, this is the first time this issue has been addressed in the Markov bases literature.

On the other hand, we are able to decompose every Markov basis element using certain ``basic" moves (Theorem \ref{thm:essentialMarkovForModel}), which are, as we will see, statistically meaningful by definition.  
The key idea is to decompose the toric ideal of the $p_1$ model using ideals which are known and easier to understand.
Namely, ignoring the normalizing constants $\lambda_{ij}$ reveals a connection between $p_1$ models and toric varieties which are associated to certain graphs.  
These have been studied by the commutative algebra and algebraic geometry community, specifically Villarreal \cite{Vill} and Ohsugi and Hibi \cite{OH}. 
We will use this connection to explain the form of Markov bases of $p_1$ models.  In terms of ideal generators for the $p_1$ model, 
re-introducing the normalizing constants does add another level of difficulty. 
%			\marginpar{\it A la Ale's \\ note on\\ minimal \\ moves. \\ --sonja}
 However, in terms of moves on the network, we can avoid this difficulty by exhibiting the decomposition of the moves (although inapplicable in terms of ideal generators) using well-understood binomials arising from graphs.  
This approach reduces the complexity and size of Markov moves. In addition, it allows us to bypass the study of those basis elements are not applicable due to the constraints described above.

%%%%%%%%%%%%%%%%%%%%%%%%%%%%%%%%%%%%%%%%%%%%%%%%%%%%%%%%%%%%%%%%%%%%%%%%%%%%%%%%%%%%
\subsection{The toric ideal of the $p_1$ model}
\label{section:algebra-toric-ideal-of-p1}

Recall that the model consists of the points of the probability simplex that are in the row space of the design matrix. It follows that our model is, in fact, the
positive part of the variety that is (the closure of) the image of the map $\varphi_n$. %, is, in fact, our model. 
%It is also (the real part of) a (projective) toric variety.  
For more discussion on the geometry of the model, see Section \ref{sec:mle}.  
To understand the model, we ask for all the polynomial equations that vanish on all of the points of the model; this set of equations is the defining ideal of the variety.
In the case of log-linear models, the ideal is a \emph{toric ideal} (see \cite{St} for a standard reference). It can be computed from the kernel of the design matrix $M$:
\begin{align*}
	I_M = \left( p^u-p^v : u-v \in \mathrm{kernel}(M) \right),
\end{align*}
where $p^u$ denotes a monomial. % in the $p_{ij}$'s in vector notation: $ p_1^{u_1}\dots p_c^{u_c}$ for $c=4{{n}\choose{2}}$.
Any \emph{generating set} of the defining ideal $I_M$ gives a Markov basis for our model. This is the Fundamental Theorem of Markov Bases (see \cite{diac:stur:1998},
or \cite{drto:stur:sull:2009}, Theorem 1.3.6). 
It describes the relations among the $p_{ij}(\bullet,\bullet)$,
and can be used for a random walk in any given fiber consisting of the points with the same minimal sufficient statistics, and thus to compute the exact distribution of the model for goodness-of-fit purposes. Note that the sufficient statistics, in this case, are the in- and out- degree distributions of the nodes.  

In order to enumerate \emph{all} networks with the same degree distributions, one might want to use a \emph{Gr\"obner basis} instead.  A Gr\"obner basis is a generating set of the ideal, usually non-minimal, with some special structure desirable to have for computations. It is guaranteed to connect all points in a given fiber; every Gr\"obner basis is a Markov basis.  A Gr\"obner basis can be made unique by requiring it to be \emph{reduced}.  There are finitely many reduced Gr\"obner bases, and the union of all of them  is contained in the set of primitive binomials, called the \emph{Graver basis} of the ideal.  The Graver basis is generally very hard to compute, but sometimes easier to describe algebraically, and its structure naturally implies constraints on the structure of the minimal Markov moves.  

Our first goal is to understand the structure of these Markov bases for the three cases of the $p_1$ model.  Even though their size grows rapidly as we increase the number of nodes, there is quite a lot of structure in these generating sets.  In what follows, we will first illustrate this structure on some small networks. 

Let us first fix some notation for the remainder of the paper.
Since there are three cases of the $p_1$ model, we need three different names for the design matrices of the $n$-node network.  The design matrix depends on the choice of $n$ and $\rho_{ij}$:
\begin{enumerate}
    \item  For the case $\rho_{ij}=0$, when the reciprocal effect is \emph{zero}, the design matrix for the $n$-node network will be denoted by $\Z_n$.
    \item For the case of  \emph{constant} reciprocation, i.e. $\rho_{ij}=\rho$, the $n$-node network matrix will be denoted by $\C_n$.
    \item When reciprocation is \emph{edge-dependent}, i.e. $\rho_{ij}=\rho + \rho_i+\rho_j$, the design matrix will be denoted by $\E_n$.
\end{enumerate}

\smallskip

%%%%%%%%%%%%%%%%%%%%%%%%%%%%%%%%%%%%%%%%%%%%%%%%%%%%%%%%%%%%%%%%%%%%%%%%%%%%%%%%%%%%
\subsection{Markov bases of small networks}
\label{section:small-Markov}

%%%%%%%%%%%%%%%%%%%%%%%%%%%%%%%%%%%%%%%%%%%%%%%%%%%%%%%%%%%%%%%%%%%%%%%%%%%%%%%%%%%%
\subsubsection{Case I: no reciprocation} {\bf{($\rho_{ij}=0$})}
\label{subsec:smallMarkov-noRho}

This is clearly a special case of $\rho_{ij}=\rho$, but we treat it here separately.  We will see that, algebraically, it is interesting in its own right.

Let's start with the simplest nontrivial example: $n=2$.  The design matrix $\Z_2$ which encodes the parametrization $\varphi_2$ of the variety is
\begin{align*}
    \Z_2 = \mtx
    1&1&1&1\\
    0&1&0&1\\
    0&0&1&1\\
    0&0&1&1\\
    0&1&0&1\\
    0&1&1&2
    \mtxend 
    \begin{matrix}
	    \lambda_{12}\\ \alpha_1 \\ \alpha_2 \\ \beta_1 \\ \beta_2 \\ \theta
    \end{matrix} 
\end{align*}
The rows of $\Z_2$ are indexed by parameters as indicated, while the columns are indexed by $p_{12}(0,0)$, $p_{12}(1,0)$, $p_{12}(0,1)$, $p_{12}(1,1)$.
The ideal $I_{\Z_2}$ is the principal ideal generated by one quadric:
\begin{align*}
	I_{\Z_2} = ( p_{12}(1,0) p_{12}(0,1) - p_{12}(1,1)p_{12}(0,0) )
\end{align*}
and thus this single binomial is a Markov basis, and also a Gr\"obner basis with respect to any term order.
This can be verified by hand, or using software such as {\tt 4ti2} \cite{4ti2}.
\begin{rmk}[{\bf From binomials to moves}]
	In general, we can translate binomials to moves in the following way: 
	we will remove all edges that are represented by the $p_{ij}$'s in the negative monomial, and add all edges represented by the $p_{ij}$'s in the positive 		monomial.  Note that if $p_{ij}(0,0)$ occurs in either, it has no effect: it says to remove or add the ''no-edge'', so nothing needs to be done. 
	However, there is a reason why 		$p_{ij}(0,0)$'s show up in the ideal: the structure of $\Z_n$ for any $n$ requires that each binomial in the ideal is homogeneous with respect to the pair $\{i,j\}$.  Here, for example, since the positive monomial is of degree two, the negative 		monomial has $p_{12}(0,0)$ attached to it to ensure it also is of degree two.\\	
	Thus, the generator of $I_{\Z_2}$ represents the following Markov move:  
	\begin{verbatim} 
		Delete the bidirected edge between 1 and 2, 
		and replace it by an edge from		1 to 2 and an edge from 2 to 1.
	\end{verbatim}
\end{rmk}
However, if we would like to require at most one edge per dyad, then this binomial is meaningless and there aren't really any allowable Markov moves.  
Philosophically, the case of no reciprocation somehow contradicts this requirement, since if $\rho_{ij}=0$, we always get that a bi-directed edge between two nodes is valued the same as two edges between them.  Thus the assumption of only one edge per dyad makes this problem so much more complicated, as relations like this one for any dyad in an $n$-node network will appear in the generating sets of the ideal $I_{\Z_n}$, but we will \emph{never} want to use them.

Next, let $n=3$.  The toric ideal $I_{\Z_3}$ is minimally generated by the following set of binomials:
\begin{align*}
    p_{23}(0,1) p_{23}(1,0) &-p_{23}(1,1)p_{23}(0,0),\\
    p_{13}(0,1) p_{13}(1,0) &-p_{13}(1,1)p_{13}(0,0),\\
    p_{12}(0,1) p_{12}(1,0) &-p_{12}(1,1)p_{12}(0,0),\\
    p_{12}(0,1) p_{13}(1,0) p_{23}(0,1) & -p_{12}(1,0) p_{13}(0,1) p_{23}(1,0).
\end{align*}
It is interesting to note that the first $3$ generators are precisely the binomials from $I_{\Z_2}$ for the three dyads $\{1,2\}$, $\{1,3\}$, and $\{2,3\}$.
The only statistically meaningful generator is the cubic. It represents the following move:
\begin{verbatim}
	Replace the edge from 1 to 2 by the edge from 2 to 1;
	replace the edge from 2 to 3 by the edge from 3 to 2;
	replace the edge from 3 to 1 by the edge from 1 to 3.
\end{verbatim}
Graphically, it represents the three-cycle oriented two different ways: the positive monomial represents the cycle $1\to 3\to 2\to 1$, while the negative monomial represents the cycle $1\to 2\to 3\to 1$.

Suppose now that $n=4$.  A minimal generating set for the ideal $I_{\Z_4}$ consists of $151$ binomials:
\begin{itemize}
	\item $6$ quadrics, 
	\item $4$ cubics, 
	\item $93$ quartics and 
	\item $48$ quintics.  
\end{itemize}
Some of these violate the requirement that each dyad can be observed in only one state. 
As it is impractical to write all of these binomials down, we will list just a few of those that are statistically meaningful (i.e. respect the requirement of at most one edge per dyad at any time).  As expected, the quadrics and the cubics are simply the generators of $I_{\Z_3}$ for the four $3$-node subnetworks of the four-node network.  As we've seen, the quadrics are not of interest.  The cubics represent the three-cycles.  Here is a list of sample quartics, written in binomial form:
% as it is most appropriate at the moment:
\begin{align*}
	p_{12}(1,1)p_{34}(1,1)p_{23}(0,0)p_{14}(0,0) & -  p_{12}(0,0)p_{34}(0,0)p_{23}(1,1)p_{14}(1,1),\\ %printout Markov type A
	 p_{23}(1,1)p_{14}(1,1)p_{13}(0,0)p_{24}(0,0) &-   p_{23}(1,0)p_{14}(1,0)p_{13}(0,1)p_{24}(0,1),\\ %printout Markov type E
	 p_{23}(1,1)p_{14}(1,1) p_{12}(0,0)p_{34}(0,0) &- p_{12}(1,0)p_{23}(1,0)p_{34}(1,0)p_{14}(0,1),\\ %printout Markov type I
	 p_{12}(0,0)p_{23}(1,1)p_{34}(0,1)p_{14}(1,0) &- p_{12}(1,0)p_{23}(1,0)p_{34}(1,1)p_{14}(0,0) .\\  %notebook summary first type 1-4-3-2-3 = 1-2-3-4-3.
\end{align*}
Finally, we list some representative quintics:
\begin{align*}
	 p_{12}(0,0)p_{23}(1,1)p_{34}(0,1)p_{14}(0,1)p_{24}(1,0)  &-  p_{12}(0,1)p_{23}(1,0)p_{34}(1,1)p_{14}(0,0)p_{24}(0,1) ,\\ %printout Markov "sample"
	 p_{12}(1,0)p_{23}(1,0)p_{14}(0,0)p_{13}(1,1)p_{24}(1,0)  &-  p_{12}(0,1)p_{23}(1,1)p_{14}(1,0)p_{13}(1,0)p_{24}(0,0).\\ %notebook summary second type
\end{align*}

This set of Markov moves is quite more complex then the  $10$ moves originally described by Holland and Leinhardt for the $4$-node case.
We will postpone any further analysis of these binomials until the next section. For now, let us note that all of them preserve the in- and out- degree distributions of the nodes in the network.  After we study the other two cases for $\rho_{ij}$, we will see a recurring underlying set of moves which can be used to understand the ideals.

%%%%%%%%%%%%%%%%%%%%%%%%%%%%%%%%%%%%%%%%%%%%%%%%%%%%%%%%%%%%%%%%%%%%%%%%%%%%%%%%%%%%
\smallskip
\subsubsection{Case II: constant reciprocation} {\bf{($\rho_{ij}=\rho$)}}
\label{subsec:smallMarkov-constantRho}

Now we introduce one more row to the zero-$\rho$ design matrix $\Z_n$ to obtain the constant-$\rho$ matrix $\C_n$.
Namely, this row represents the constant $\rho$ added to those columns indexed by $p_{ij}(1,1)$ for all $i,j\in [n]$.
It is filled with the pattern $0,0,0,1$ repeated as many times as necessary.  For example, the design matrix for the $2$-node network is as follows:
\begin{align*}
    \C_2 = \mtx
    1&1&1&1\\
    0&1&0&1\\
    0&0&1&1\\
    0&0&1&1\\
    0&1&0&1\\
    0&1&1&2\\
    0&0&0&1
    \mtxend 
    \begin{matrix}
	    \lambda_{12}\\ \alpha_1 \\ \alpha_2 \\ \beta_1 \\ \beta_2 \\ \theta \\ \rho
    \end{matrix} 
\end{align*}
In this case the ideal is empty (there is nothing in the kernel of $\C_2$), which is expected since the only relation in the case of $\rho_{ij}=0$ required that there is no reciprocation. Here, the bidirected edge is valued differently then the two single edges in a dyad; this is the meaning of the last row of the design matrix.

For the $3$-node network, the Markov move consist only of the cubic from the case $\rho_{ij}=0$:
\begin{align*}
	    p_{12}(0,1) p_{13}(1,0) p_{23}(0,1) & -p_{12}(1,0) p_{13}(0,1) p_{23}(1,0).
\end{align*}
On a side note, even the Graver basis consists only of this move and $15$ other non-applicable moves (those which ignore the single-edged dyad assumption).

Let $n=4$. The software {\tt 4ti2} outputs a minimal generating set of the ideal $I_{\C_4}$ consisting of:
\begin{itemize}
	\item  $4$ cubics,
	\item $57$ binomials of degree $4$,
	\item $72$ of degree $5$,
	\item $336$ of degree $6$,
	\item $48$ of degree $7$, and 
	\item $18$ of degree $8$.
\end{itemize}
Out of this large (!) set, the applicable Markov moves are the same as in the case $\rho_{ij}=0$ with a few degree-six binomials added, such as:
\begin{align*}
	p&_{12}(0,0)p_{13}(1,1)p_{14}(1,1)p_{23}(0,1)p_{24}(1,0)p_{34}(0,0) -  \\
		&   p_{12}(1,1)p_{13}(0,1)p_{14}(1,0)p_{23}(0,0)p_{24}(0,0)p_{34}(1,1).
\end{align*}

%%%%%%%%%%%%%%%%%%%%%%%%%%%%%%%%%%%%%%%%%%%%%%%%%%%%%%%%%%%%%%%%%%%%%%%%%%%%%%%%%%%%
\smallskip
\subsubsection{Case III: edge-dependent reciprocation} {\bf{($\rho_{ij}=\rho+\rho_i+\rho_j$)}}
\label{subsec:smallMarkov-RhoEdgeDependent}

To construct the design matrix $\E_n$ for this case, we start with the matrix $\C_n$ from the case $\rho_{ij}=\rho$,
and introduce $n$ more rows indexed by $\rho_1,\dots, \rho_n$.
Every fourth column of the new matrix, indexed by $p_{ij}(1,1)$, has two nonzero entries: a $1$ in the rows corresponding to $\rho_i$ and $\rho_j$.
For example, when $n=2$, the matrix looks like this:
\begin{align*}
    \E_2 = \mtx
    1&1&1&1\\
    0&1&0&1\\
    0&0&1&1\\
    0&0&1&1\\
    0&1&0&1\\
    0&1&1&2\\
    0&0&0&1\\
    0&0&0&1\\
    0&0&0&1
    \mtxend 
    \begin{matrix}
	    \lambda_{12}\\ \alpha_1 \\ \alpha_2 \\ \beta_1 \\ \beta_2 \\ \theta \\ \rho \\ \rho_1 \\ \rho_2
    \end{matrix} 
\end{align*}
This is a full-rank matrix so the ideal for the $2$-node network is empty.

With $n=3$ we get the expected result;  the ideal $I_{\E_3}$ is the principal ideal
$$ I_{\E_3} = ( p_{12}(1,0)p_{23}(1,0)p_{13}(0,1) - p_{12}(0,1)p_{23}(0,1)p_{13}(1,0) ) . $$

With $n=4$ we get the first interesting Markov moves for the edge-dependent case.
The software {\tt 4ti2} outputs a minimal generating set of the ideal $I_{\E_4}$ consisting of:
\begin{itemize}
	\item  $4$ cubics,
	\item $18$ binomials of degree $4$,
	\item $24$ of degree $5$.
\end{itemize}
The cubics, as usual, represent re-orienting a $3$-cycle.  Similarly some of the quartics represent $4$-cycles.  And then we get a few more binomials, of the following types:
\begin{align*}
	p&_{13}(0,0) p_{24}(0,0)  p_{14}(0,1)  p_{23}(0,1)   - \\
	  & p_{13}(0,1) p_{24}(0,1)  p_{14}(0,0)  p_{23}(0,0)
\end{align*}
of degree four, and 
\begin{align*} 
	p&_{13}(0,0) p_{24}(0,0) p_{14}(0,1) p_{12}(1,0) p_{23}(1,0) - \\
	   & p_{13}(1,0) p_{24}(0,1) p_{14}(0,0) p_{12}(0,1) p_{23}(0,0)
\end{align*}
of degree five.
Note that these two are just representatives; we may, for example, replace every $p_{ij}(0,0)$ in each of them by $p_{ij}(1,1)$, and get other Markov moves which are minimal generators of the toric ideal $I_{\E_4}$.

\smallskip
%%%%%%%%%%%%%%%%%%%%%%%%%%%%%%%%%%%%%%%%%%%%%%%%%%%%%%%%%%%%%%%%%%%%%%%%%%%%%%%%%%%%

\subsection{From the $p_1$ model to an edge subring of a graph}
\label{section:simplified-model-and-graphs}

A careful reader will have noticed a pattern in the moves that have appeared so far.  To that end, 
let us single out two special submatrices that appear in the design matrices in each of the three cases.  
\begin{enumerate}
	\item
		First, for each case, we consider the matrix of the \emph{simplified} model obtained from the $p_1$ model by 
		simply forgetting the normalizing constants $\lambda_{ij}$.  Let us denote these simplified matrices  by 
		 $\tilde\Z_n$, $\tilde\C_n$ and $\tilde\E_n$. Note that ignoring $\lambda_{ij}$'s results in zero columns 
		 for each column indexed by $p_{ij}(0,0)$, % for some pair $(i,j)$.  
		 and so we are effectively also ignoring all $p_{ij}(0,0)$'s.
		Hence, the matrices $\tilde\Z_n$, $\tilde\C_n$ and $\tilde\E_n$ have ${{n}\choose{2}}$ less rows and ${{n}\choose{2}}$
		less columns than $\Z_n$, $\C_n$ and $\E_n$, respectively.
	\item
		The second special matrix will be denoted by $\A_n$ and is common to all three cases. It is obtained from 
		$\tilde\Z_n$, $\tilde\C_n$ or $\tilde\E_n$
		by ignoring the columns indexed by $p_{ij}(1,1)$ for all $i$ and $j$, and then removing any zero rows.  
\end{enumerate}
While these constructions may seem artificial at a first glance, we will soon see that they are helpful in effectively describing 
Markov bases for $p_1$ model for any $n$.

\smallskip

Let us consider an example.
The ideal of $I_{\A_4}$ is generated by the four cubics representing the $3$-cycles, and six quadrics, each of which represents the following move for some choice of $i,j,k,l\in\{1,\dots,4\}$:
\begin{verbatim}
	Replace the edges from i to j and from l to k 
		by the edges from i to k and from l to j.
\end{verbatim}
Graphically, 
%%%%%%%%%%%%   {\bf \textcolor{green} {to be INCLUDED :  A PICTURE!!}} \\
the move simply exchanges the heads of the directed arrows while keeping the tails fixed.

It turns out that these moves are basic building blocks for the Markov moves for the $4$-node network, decorated with some other edges to ensure homogeneity, and sometimes this decoration can be quite non-obvious.   They depend on the homogeneity requirements which are there for all three cases of $p_1$, but also on the way that bi-directed edges might interact with ``regular'' edges, specially in the case of no reciprocation.  %The precise statements of the Theorems appear in the next section.  
In particular, 
we will see (Theorems \ref{thm:simplified-noRho-decomposition}, \ref{thm:simplified-EdgeRho-decomposition}) that the ideals of the simplified models are a sum of the ideal $I_{\A_n}$ and another nice toric ideal. 
 It will then follow that the ideal of our model is a multi-homogeneous piece of the ideal of the simplified model (Theorem \ref{thm:geometry-of-p1:multihomog}).  
 Equivalently, the corresponding varieties are obtained by slicing the simplified-model varieties with hyperplanes.
The upshot of the decomposition is that the Markov moves can be obtained by overlapping simple moves.
 
\begin{eg}\label{eg:overlap}
	The following binomial is a generator of the ideal $I_{\Z_4}$:
	\begin{align*}
		p_{12}(1,0) p_{13}(1,1) p_{23}(1,0) p_{24}(1,0)  -  p_{12}(0,1) p_{13}(1,0) p_{14}(1,0) p_{23}(1,1). 
	\end{align*}
	The move itself is equivalent to performing a sequence of two simple moves, namely:
	\begin{verbatim}
		replace the cycle 1 -> 2 -> 3
			by the cycle 1 <- 2 <- 3 ,
	\end{verbatim}
	followed by
	\begin{verbatim}
		Replace the edges from 1 to 3 and from 2 to 4 
			by the edges from 1 to 4 and from 2 to 3.
	\end{verbatim}
	This ''decomposition" depends on the fact that reciprocation is zero, so that the double edge is valued the same as two regular edges.
\end{eg}
The following example illustrates that not all Markov moves are obtained in the same fashion. 
\begin{eg}\label{eg:overlap-and-lift}
	Consider the case of edge-dependent reciprocation on $n=4$ nodes. The following degree-five binomial appears as a minimal generator of the ideal $I_{\E_4}$,
	for a choice of $1\leq i,j,k,l \leq n$:
	\begin{align*}
		p_{ij}(1,0) p_{ik}(0,0) p_{il}(0,1) p_{jk}(1,0) p_{jl}(0,0) - p_{ij}(0,1) p_{ik}(1,0) p_{il}(0,0) p_{jk}(0,0) p_{jl}(0,1).
	\end{align*}
	Clearly this move can be obtained by the following sequence of simple moves:
	\begin{verbatim}
		Replace the edges from l to i and from j to k 
			by the edges from l to k and from j to i,
	\end{verbatim}
	followed by
	\begin{verbatim}
		Replace the edges from i to j and from l to k 
			by the edges from i to k and from l to j.
	\end{verbatim}
	Note that we do not remove or add the empty edge represented by $p_{ij}(0,0)$; but these variables are required by homogeneity.
\end{eg}
%\begin{eg}[not competed yet!]
%\marginpar{finish me ! \\ --sonja}
%	\textcolor{red}{  insert pictures instead of just binomials.
%	}
%	here we'll insert a few examples of how to decompose the moves \emph{a la Ale} in the sense that for $n=4$, say, 
%	  it will turn out that \emph{all} Markov moves are combinations of the 10 moves for $\A_4$; even though as binomials, 
%	  these do NOT decompose inside of the toric ideal.  So in a sense what I am saying is that the original 10 moves make most sense anyway, 
%	  and what we are doing with this re-parametization is introducing some decorative stuff and making our binomials a 
%	  little more complicated to ensure they are homogeneous.
%	
%	A perfect example of such a decomposition can be found on slide $20$ (page $80$ of the pdf file!)
%	of
%	\begin{verbatim}
%		http://www.samsi.info/200809/algebraic/
%		presentations/stat_models/Petrovic-algstats-samsi.pdf
%	\end{verbatim}
%\end{eg}

%%%%%%%%%%%%%%%%%%%%%%%%%%%%%%%%%%%%%%%%%%%%%%%%%%%%%%%%%%%%%%%%%%%%%%%%%%%%%%%%%%%%

\subsection{The toric ideal of the common submatrix}{\bf ($\A_n$)}
\label{subsec:common-submatrix-ideal}

Focusing on the submatrix $\A_n$ reveals additional structure which can be studied using some standard algebraic techniques. 
To that end, we recall the following standard definition (see \cite{Vill}).
\begin{defn}
	Let $k$ be any field (e.g. $k=\mathbb C$), $G$ be any graph and $E(G)$ the set of its edges.  
	If we consider the vertices of $G$ to be variables, then the edges of $G$ correspond to degree-two monomials in those 
	variables. The ring 
	$$k[G]:=k[ xy \phantom{i}  | \phantom{i}  (x,y) \in E(G)]$$
	is called the \emph{monomial subring} or the \emph{edge subring} of the graph $G$.  \\
	Its ideal of relations is called the \emph{toric ideal of the edge subring}.
\end{defn}
We will be interested in special graphs.  
Let $G_n:=K_{n,n}\backslash\{(i,i)\}_{i=1}^n$ be the complete bipartite (undirected) graph $K_{n,n}$ on two sets of with $n$ vertices each, 
but with the vertical edges $(i,i)$ removed.  
If we label one set of vertices $\alpha_1,\dots,\alpha_n$ and the other set $\beta_1,\dots,\beta_n$, then our graph $G_n$ has edges
$(\alpha_i,\beta_j)$ for all $i\neq j$.  Thus we are interested in the ring
$$k[G_n]:=k[\alpha_i\beta_j \phantom{i} | \phantom{i} (\alpha_i,\beta_j)\mbox{ is an edge of } G_n].$$

\begin{lm} \label{lm:homog-part-of-our-ideal-corresponds-to-toric-ideal-of-graph}
    With notation just introduced, the ideal $I_{\A_n}$ is the toric ideal of the monomial subring $k[G_n]$.
\end{lm}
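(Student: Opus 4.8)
The plan is to exhibit an explicit bijection between the generators of the parametrization defining $I_{\A_n}$ and the parametrization defining the toric ideal of $k[G_n]$, and then invoke the standard fact that a toric ideal is determined up to isomorphism by (the integer lattice spanned by) the columns of its defining matrix. First I would recall exactly what $\A_n$ is: starting from $\tilde\Z_n$ (equivalently $\tilde\C_n$ or $\tilde\E_n$, which agree in the relevant rows and columns), we discard all columns indexed by $p_{ij}(1,1)$ and all columns indexed by $p_{ij}(0,0)$ (the latter already being zero columns once the $\lambda_{ij}$ are dropped), and then delete any zero rows. The surviving columns are indexed by $p_{ij}(1,0)$ and $p_{ij}(0,1)$ for $i<j$, and the surviving rows are indexed by $\alpha_1,\dots,\alpha_n,\beta_1,\dots,\beta_n$ — the row for $\theta$ becomes the all-ones row (since each of $p_{ij}(1,0)$ and $p_{ij}(0,1)$ has $\theta$-exponent $1$) and contributes nothing new to the lattice, so it may be absorbed or ignored up to the usual homogenization, and I would remark on this explicitly.

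Next I would write down the monomial map. From \eqref{eq:hl.eqs} (with $\lambda_{ij}$ suppressed and $\theta$ suppressed as above), the column $p_{ij}(1,0)$ maps to the monomial $\alpha_i\beta_j$ and the column $p_{ij}(0,1)$ maps to $\alpha_j\beta_i$. So $\A_n$ (after the harmless row/column bookkeeping) is precisely the matrix whose columns are the exponent vectors of $\{\alpha_i\beta_j : i\neq j\}$ in the variables $\alpha_1,\dots,\alpha_n,\beta_1,\dots,\beta_n$. But that is exactly the set of squarefree degree-two monomials attached to the edges $(\alpha_i,\beta_j)$, $i\neq j$, of the graph $G_n=K_{n,n}\setminus\{(i,i)\}$. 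Hence the monomial subring generated by the image of $\varphi_n$ restricted to these coordinates is literally $k[G_n]$, and the kernel of the corresponding ring homomorphism — which by definition is $I_{\A_n}$ — is by definition the toric ideal of the edge subring $k[G_n]$. The identification $p_{ij}(1,0)\leftrightarrow \alpha_i\beta_j$, $p_{ij}(0,1)\leftrightarrow \alpha_j\beta_i$ is the required bijection on variables, and since the two toric ideals are kernels of the same monomial parametrization (up to relabeling variables), they coincide.

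The one point that needs care — and the only real obstacle — is the treatment of the $\theta$-row and the fact that in the $p_1$ parametrization the monomials $\alpha_i\beta_j\theta$ carry an extra $\theta$, so formally the columns of $\A_n$ before cleanup are not literally the edge-monomials of $G_n$ but those monomials times a common factor $\theta$. I would resolve this by noting that multiplying every generator of a monomial algebra by a single common new variable induces an isomorphism of the toric ideals (it is a Veronese-type / coning operation that does not change the lattice of relations $u-v$ with $\sum u = \sum v$, because the $\theta$-exponents are equal on both sides of every binomial in the ideal, the ideal being homogeneous in $\theta$-degree automatically). Equivalently: the kernel of the matrix $\A_n$ is unchanged whether or not we include the all-ones $\theta$-row, since that row imposes only the degree-homogeneity condition already forced by the bipartite structure. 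After this remark the identification of lattices is immediate and the lemma follows. I would also double-check the small cases against the example already given in the text: $I_{\A_4}$ should be generated by the four $3$-cycle cubics and the six "swap-the-heads" quadrics, which is exactly the classical description of the toric ideal of $K_{4,4}$ minus a perfect matching, providing a sanity check.
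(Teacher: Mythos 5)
Your argument is correct and follows essentially the same route as the paper: identify $\A_n$ (up to the bookkeeping you describe) with the incidence matrix of $G_n$, so that $I_{\A_n}$ is by definition the toric ideal of the edge subring $k[G_n]$. The paper's proof is a one-line version of this that silently ignores the all-ones $\theta$-row; your explicit remark that this row lies in the span of the $\alpha$-rows (bipartiteness) and hence does not change the kernel is a detail the paper glosses over, and it is handled correctly.
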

\begin{proof}
    Returning to our definition of $\A_n$, we see that it is the incidence matrix of the graph $G_n$; 
    that is, the columns of the matrix correspond to the exponents of the monomials representing the edges of $G_n$.
    Thus the claim readily follows by the definition of the toric ideal of $k[G_n]$ from Section 8.1. in \cite{Vill}.
\end{proof}
We will use this correspondance to obtain a Gr\"obner basis of $I_{\Z_n}$ and $I_{\E_n}$. But first, 
 let us introduce one more concept which is crucial to the description of a Gr\"obner basis of $I_{\A_n}$.
\begin{defn}\rm
    \label{defn:binomials-from-cycles}
    Following \cite{OH} and \cite{Vill}, we say that a binomial $f_c$
    \emph{arises from} a cycle $c$ of $G_n$ if:
    \begin{enumerate}
    \item $c$ is a cycle of the graph $G_n$; namely, $c$ is a closed walk on a subset of the vertices of $G_n$ with the property that
          no vertex is visited twice:
          $$  c = ( \alpha_{i_1}, \beta_{j_1}, \alpha_{i_2}, \beta_{j_2}, \dots, \beta_{j_k}, \alpha_{i_1} ) ,
          $$
    \item $f_c$ is the binomial obtained by placing the variables corresponding to the edges of the cycle $c$ alternately in one and then the other
          monomial of $f_c$:
          $$ f_c := f_c^+ - f_c^-
          $$
          with
          $$ f_c^+ := p_{i_1,j_1}(1,0) p_{i_2,j_2}(1,0) \dots p_{i_{k-1},j_{k-1}}(1,0) p_{i_i,j_k}(1,0)
          $$
          and
          $$ f_c^- := p_{i_2,j_1}(1,0) p_{i_3,j_2}(1,0) \dots p_{i_k,j_{k-1}}(1,0) p_{i_1,j_k}(1,0)   ,
          $$
          where for ease of notation we have let $p_{ij}(1,0):=p_{ji}(0,1)$ if $i>j$.
    \end{enumerate}
\end{defn}
There are finitely many cycles in $G_n$, though they may be nontrivial to enumerate. However, we will use this description to provide (theoretically)
an explicit Gr\"obner basis for our toric ideal.  In practice, one can use the program {\tt 4ti2} to obtain the binomials fairly quickly. 

Gr\"obner  bases, and even Graver bases, for the ideals $I_{\A_n}$ are known (\cite{OH}, \cite{Vill}):
\begin{theorem}[{{Bases of the ideal of the common submatrix}}]
\label{prop:G-basis-of-common-submatrix}

	Let $\G_n$ be the set of binomials arising from the cycles of the graph $G_n$. Then $\G_n$ is a Gr\"obner basis of $I_{\A_n}$.
	Thus it is also a Markov basis of $I_{\A_n}$, but not necessarily a minimal one. 
	
	Moreover, this set is a Graver basis of the ideal, and it coincides with %the set of circuits, and thus also with 
	the universal Gr\"obner basis.
\end{theorem}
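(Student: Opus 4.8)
The plan is to reduce the whole statement to classical facts about toric ideals of edge subrings of \emph{bipartite} graphs, and then to squeeze the four sets in question between one another using the universal inclusions among circuits, the universal Gr\"obner basis, and the Graver basis. We may assume $n\ge 3$, as for $n\le 2$ the graph $G_n$ has no cycles and $I_{\A_n}=(0)$, so the statement is vacuous. By Lemma~\ref{lm:homog-part-of-our-ideal-corresponds-to-toric-ideal-of-graph}, $I_{\A_n}$ is the toric ideal of the edge subring $k[G_n]$ of $G_n=K_{n,n}\setminus\{(i,i)\}_{i=1}^n$, which is bipartite with parts $\{\alpha_1,\dots,\alpha_n\}$ and $\{\beta_1,\dots,\beta_n\}$. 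I would then recall two general facts valid for the toric ideal $I_M$ of any integer matrix $M$ (see \cite{St}): first, $\mathrm{Circ}(I_M)\subseteq \mathcal{U}(I_M)\subseteq \mathrm{Grav}(I_M)$, where $\mathrm{Circ}$, $\mathcal{U}$, $\mathrm{Grav}$ denote the set of circuits, the universal Gr\"obner basis, and the Graver basis; and second, $\mathcal{U}(I_M)$ is a Gr\"obner basis with respect to every term order (hence a Markov basis), while $\mathrm{Grav}(I_M)$ is itself a Markov basis. Given this, it suffices to establish the two inclusions
\[
\G_n\subseteq \mathrm{Circ}(I_{\A_n})\qquad\text{and}\qquad \mathrm{Grav}(I_{\A_n})\subseteq \G_n,
\]
since together with the general chain they force $\mathrm{Circ}(I_{\A_n})=\mathcal{U}(I_{\A_n})=\mathrm{Grav}(I_{\A_n})=\G_n$, which is precisely the assertion of the theorem (the ``not necessarily minimal'' clause being then automatic, e.g.\ because the quadrics in $\G_n$ need not all be indispensable).

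For the inclusion $\mathrm{Grav}(I_{\A_n})\subseteq \G_n$ I would use the description, due to Ohsugi and Hibi (\cite{OH}; see also \cite{Vill}), of the Graver basis of the toric ideal of a graph $G$: a binomial lies in $\mathrm{Grav}(I_G)$ if and only if it equals $f_w$ for some \emph{primitive even closed walk} $w$ of $G$. As $G_n$ is bipartite, every closed walk is automatically even, so the only condition is primitivity, and the one genuinely combinatorial step is to check that a primitive closed walk in a bipartite graph visits no vertex twice. Indeed, if a closed walk $w$ returned to a vertex before closing up, one could split it at that vertex into two strictly shorter closed walks $w_1,w_2$, each again of even length by bipartiteness, and then $f_{w_1},f_{w_2}$ would exhibit a proper factorization witnessing that $f_w$ is not primitive. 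Hence the primitive even closed walks of $G_n$ are exactly its cycles, so $\mathrm{Grav}(I_{\A_n})=\G_n$.

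For the inclusion $\G_n\subseteq \mathrm{Circ}(I_{\A_n})$ I would argue directly from the matroid of $\A_n$. Under Lemma~\ref{lm:homog-part-of-our-ideal-corresponds-to-toric-ideal-of-graph} the columns of $\A_n$ are the incidence vectors of the edges of $G_n$. If $c$ is a cycle of $G_n$, the alternating $\pm1$ assignment on its edges lies in $\ker_{\mathbb Z}\A_n$, has coprime entries, and has disjoint positive and negative supports, so $f_c$ (in the notation of Definition~\ref{defn:binomials-from-cycles}) is a nonzero binomial of $I_{\A_n}$ supported exactly on $E(c)$. Moreover every proper subset of $E(c)$ is a disjoint union of paths and hence indexes a linearly independent set of columns, so $E(c)$ is a minimal dependent column set, i.e.\ a circuit of the vector matroid of $\A_n$. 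Therefore $f_c\in\mathrm{Circ}(I_{\A_n})$, giving $\G_n\subseteq\mathrm{Circ}(I_{\A_n})$ and closing the argument.

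The only step I expect to require real care is the bipartite reduction of the second paragraph: spelling out precisely why primitivity of $f_w$ forces $w$ to be a cycle, including the degenerate possibilities (walks that backtrack along an edge, which give the zero binomial and are discarded, or very short cycles), and keeping the identification ``edge of $G_n$'' $\leftrightarrow$ ``indeterminate of $I_{\A_n}$'' straight under the convention $p_{ij}(1,0):=p_{ji}(0,1)$ for $i>j$. Everything else is either a citation to \cite{OH} and \cite{Vill} or a formal consequence of the circuit / universal-Gr\"obner / Graver hierarchy.
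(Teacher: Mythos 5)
Your proposal is correct and follows essentially the same route as the paper's own (outline) proof: reduce to the toric ideal of the edge subring of the bipartite graph $G_n$ via Lemma~\ref{lm:homog-part-of-our-ideal-corresponds-to-toric-ideal-of-graph}, and then identify circuits, universal Gr\"obner basis, and Graver basis with the cycle binomials using the results of \cite{OH} and \cite{Vill}. The only difference is one of detail: where the paper cites these sources wholesale, you supply the sandwich $\G_n\subseteq\mathrm{Circ}\subseteq\mathcal U\subseteq\mathrm{Grav}\subseteq\G_n$ explicitly and prove the two outer inclusions directly (the matroid argument for circuits and the walk-splitting argument for primitivity), which is a sound and slightly more self-contained rendering of the same argument.
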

\begin{proof}[Proof (outline)]
    We will use Lemma
    \ref{lm:homog-part-of-our-ideal-corresponds-to-toric-ideal-of-graph}
    and the appropriate results from \cite{OH} and \cite{Vill} about the toric ideals of bipartite graphs.
    Note that there is quite a bit of vocabulary used in the cited results, but we have only defined those terms which we
    use in our description of the Gr\"obner basis in the Theorem.\\
    Oshugi and Hibi in \cite{OH} (Lemma 3.1.), and also Villarreal in \cite{Vill} (Proposition 8.1.2)
    give a generating set for $I_{{\A}_n}$.
    Moreover, Lemma 1.1. of \cite{OH} implies that these generators are in fact the Graver basis for $I_{{\A}_n}$. 
    On the other hand, from Chapter 8 of \cite{Vill} we see that precisely these generators in fact correspond to circuits,
    as well as the universal Gr\"obner basis for $I_{{\A}_n}$.  
    It follows that the circuits equal the Graver basis for our ideal $I_{{\A}_n}$.
    Avoiding technical details, we will just state that \emph{circuits} are a special subset
    of the Graver basis: they are minimal with respect to inclusion.
    The binomials in the Graver basis are given by the \emph{even cycles} of the graph $G_n$ (Corollary 8.1.5. in \cite{Vill}). But since our graph
    is bipartite, all of the cycles are even (Proposition 6.1.1. in \cite{Vill}), so it suffices to say that the Graver basis of $I_{{\A}_n}$
    consists of \emph{binomials arising from the cycles of the graph $G_n$}.
    %We have defined these binomials in \ref{defn:binomials-from-cycles}.\\
    Since the universal Gr\"obner basis (which equals Graver basis in this case) is by definition a Gr\"obner basis with respect to any
    term order, the proof is complete.  The binomials arising from the cycles of the graph $G_n$ form the desired set $\G_n$.
\end{proof}

In addition, we have a nice recursive way of constructing the binomials in $\G_n$.
\begin{prop}
		\label{prop:recursion-for-Gbasis-of-A(noRho)}
    The set $\G_n$ consists of precisely the binomials in $\G_{n-1}$ for all of the $n-1$-node subnetworks,
    together with the binomials arising from the cycles of the graph $G_n$ which pass through either $\alpha_i$ or $\beta_i$ for
    \emph{each} $i$ between $1$ and $n$.  
     %
%    		\marginpar{? useful for \\inductive pf\\of min.gens!}
\end{prop}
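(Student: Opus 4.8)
The plan is to read the statement directly off Theorem~\ref{prop:G-basis-of-common-submatrix}, which identifies $\G_n$ with the set of all binomials $f_c$ arising from cycles $c$ of the graph $G_n$, and then to organize the cycles of $G_n$ according to which vertices they miss. First I would record the elementary but essential observation that deleting node $m$ from the $n$-node network corresponds to deleting the two vertices $\alpha_m$ and $\beta_m$ from $G_n$, and that the induced subgraph $G_n\setminus\{\alpha_m,\beta_m\}$ is exactly (a relabelled copy of) $G_{n-1}$. Consequently the cycles of that $(n-1)$-node subnetwork are precisely the cycles of $G_n$ that use neither $\alpha_m$ nor $\beta_m$; and since Definition~\ref{defn:binomials-from-cycles} builds $f_c$ only from the edges of $c$ together with the relabelling convention $p_{ij}(1,0):=p_{ji}(0,1)$, neither of which depends on the ambient $n$, the binomial attached to such a cycle is literally the same element of the polynomial ring whether we view $c$ inside $G_{n-1}$ or inside $G_n$. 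This gives the inclusion of each $\G_{n-1}$ (one for every deleted node) into $\G_n$.

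Next I would split the cycles of $G_n$ by a trivial dichotomy: for a cycle $c$ and an index $i\in\{1,\dots,n\}$, either $c$ visits at least one of $\alpha_i,\beta_i$, or it visits neither. If there is some $m$ for which $c$ visits neither $\alpha_m$ nor $\beta_m$, then by the previous paragraph $f_c$ lies in $\G_{n-1}$ for the subnetwork on the nodes $\{1,\dots,n\}\setminus\{m\}$. If no such $m$ exists, then for every $i$ the cycle passes through $\alpha_i$ or $\beta_i$, so $f_c$ belongs by definition to the second family in the statement. Conversely, both families consist of binomials arising from genuine cycles of $G_n$ — for the first family this is exactly the embedding of subnetwork cycles established above — hence both are contained in $\G_n$ by Theorem~\ref{prop:G-basis-of-common-submatrix}. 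Putting the two directions together yields the claimed equality; one may add that the union is in fact disjoint, since a cycle contributing to some $\G_{n-1}$ omits the pair $\{\alpha_m,\beta_m\}$ and therefore cannot pass through $\alpha_m$ or $\beta_m$.

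I do not expect a serious obstacle here: the proposition is essentially a partition of the cycle set of $G_n$ according to ``which node, if any, is missed,'' and the content is bookkeeping built on top of Theorem~\ref{prop:G-basis-of-common-submatrix}. The one point that needs a word of care is the well-definedness of $f_c$ independently of $n$, i.e.\ that ``a cycle of the $(n-1)$-node subnetwork'' must be read as ``a cycle of $G_n$ avoiding a single pair $\{\alpha_m,\beta_m\}$,'' so that the two incarnations of $\G_{n-1}$ (as an abstract set of binomials and as a subset of $\G_n$) really coincide. Once that identification is in place the argument is immediate.
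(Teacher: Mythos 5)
Your proof is correct, but it reaches the conclusion by a different route than the paper. You treat the proposition as pure combinatorics of cycles: since Theorem~\ref{prop:G-basis-of-common-submatrix} identifies $\G_n$ with the set of binomials arising from cycles of $G_n$, you partition those cycles according to whether some pair $\{\alpha_m,\beta_m\}$ is entirely missed (in which case the cycle lives in the induced subgraph $G_n\setminus\{\alpha_m,\beta_m\}\cong G_{n-1}$ and its binomial is literally an element of $\G_{n-1}$ for the subnetwork omitting $m$) or every index is hit. The paper instead justifies the reduction to $(n-1)$-node subnetworks by citing Proposition~4.13 of \cite{St}: the design matrices of the subnetworks are submatrices of $\A_n$, so the Graver basis of $I_{\A_{n-1}}$ coincides with the part of the Graver basis of $I_{\A_n}$ supported on the corresponding columns. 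The two arguments hinge on different key facts --- your graph isomorphism $G_n\setminus\{\alpha_m,\beta_m\}\cong G_{n-1}$ versus the general restriction property of Graver bases --- and each has its advantages: yours is self-contained and elementary given the cycle description of $\G_n$, and it yields for free the observation that the decomposition is a genuine partition (a binomial determines the edge set, hence the vertex set, of its cycle); the paper's argument exhibits the proposition as an instance of a general principle about toric ideals of subconfigurations, which would survive even without an explicit combinatorial description of the Graver basis. Your one point of care --- that $f_c$ is the same polynomial whether $c$ is read inside $G_{n-1}$ or inside $G_n$ --- is exactly the right thing to check, and it holds because $f_c$ depends only on the edges of $c$ and the relabelling convention $p_{ij}(1,0):=p_{ji}(0,1)$, neither of which references the ambient $n$.
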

\begin{proof}
    The last condition can be restated as follows:
    we are looking for the primitive binomials $f$
    such that for each node $i$ in the random network, there exists an edge $(i,j)$ such that the variable $p_{ij}(a,b)$ appears in one of the monomials
    of $f$.\\
    The reduction to the $n-1$-node subnetworks is
    immediate from Proposition 4.13. in \cite{St}.  Namely, the design matrices for the $n-1$-node networks are submatrices of ${\A}_n$, hence the
    Graver basis of $I_{{\A}_{n-1}}$ equals that of $I_{{\A}_n}$ involving at most $n-1$ nodes.
\end{proof}
An example of a degree $5$ binomial in $I_{\A_5}$  which uses all $5$ nodes of the network is
$$ p_{14}(1,0)p_{15}(0,1)p_{23}(1,0)p_{24}(0,1)p_{35}(1,0) - p_{14}(0,1)p_{15}(1,0)p_{23}(0,1)p_{24}(1,0)p_{35}(0,1) .$$
%In the random network, t
The first term represents a cycle $1\to 4\to 2\to 3\to 5\to 1$, while the second term represents the same cycle with opposite orientation.
In terms of the graph $G_5$, each of the monomials represent edges, in alternating order, of the cycle
$$(\alpha_1,\beta_4, \alpha_2, \beta_3, \alpha_5, \beta_1, \alpha_4, \beta_2, \alpha_3, \beta_5, \alpha_1).$$

\begin{rmk}\rm
	In the Proposition above, all binomials have squarefree terms. This means that the initial ideal of the toric ideal 
	is squarefree. Using a theorem of Hochster (\cite{Hochster}), this implies that the coordinate rings of the corresponding varieties 
	are arithmetically Cohen-Macaulay.  This is a powerful algebraic-geometric property that is not encountered too often.
\end{rmk}
\begin{rmk}\rm
	There is a special property of the design matrix- \emph{unimodularity}- which, if satisfied, implies that all initial ideals or its toric ideal are squarefree, and also 
	that circuits equal the Graver basis.  In general, for the design matrix of the $p_1$ model it does \emph{not} hold.
\end{rmk}

We are now ready to embark on a more detailed study of the simplified models.

\smallskip

%%%%%%%%%%%%%%%%%%%%%%%%%%%%%%%%%%%%%%%%%%%%%%%%%%%%%%%%%%%%%%
\subsubsection{Case I: no reciprocation} {\bf ($\rho_{ij}=0$, simplified model)}
\label{subsec:simplified-noRho}

Let's start with the simplest nontrivial example: $n=2$.
The design matrix is:
\begin{align*}
    \tilde\Z_2 = \mtx
    1&0&1\\
    0&1&1\\
    0&1&1\\
    1&0&1\\
    1&1&2
    \mtxend .
\end{align*}
The rows of $\tilde\Z_2$ are indexed by $\alpha_1$, $\alpha_2$, $\beta_1$, $\beta_2$, and $\theta$,
 while the columns are indexed by $p_{12}(1,0)$, $p_{12}(0,1)$, $p_{12}(1,1)$.
One easily checks that the ideal $I_{\tilde\Z_2}$ is the principal ideal
$$I_{\tilde\Z_2} = ( p_{12}(1,0) p_{12}(0,1) - p_{12}(1,1) )$$
and thus this single binomial is a Markov basis, and also a Gr\"obner basis with respect to any term order.
%(Obtained using \cite{4ti2}.)
%This example is simple: it is clear that there are no other relations among the columns of $\tilde\Z_2$.

Next, let $n=3$.
%\begin{align*}
%    \tilde\Z_3=\mtx
%    1&0&1&1&0&1&0&0&0\\
%    0&1&1&0&0&0&1&0&1\\
%    0&0&0&0&1&1&0&1&1\\
%    0&1&1&0&1&1&0&0&0\\
%    1&0&1&0&0&0&0&1&1\\
%    0&0&0&1&0&1&1&0&1\\
%    1&1&2&1&1&2&1&1&2
%    \mtxend .
%\end{align*}
The toric ideal $I_{\tilde\Z_3}$ is minimally generated by the following set of binomials:
\begin{align*}
    p_{23}(0,1) p_{23}(1,0) &-p_{23}(1,1),\\
    p_{13}(0,1) p_{13}(1,0) &-p_{13}(1,1),\\
    p_{12}(0,1) p_{12}(1,0) &-p_{12}(1,1),\\
    p_{12}(0,1) p_{13}(1,0) p_{23}(0,1) & -p_{12}(1,0) p_{13}(0,1) p_{23}(1,0).
\end{align*}
It is interesting to note that the first $3$ generators are the ''trivial'' ones (as seen in case $n=2$).

For the network on $n=4$ nodes, we get the first interesting Markov basis elements. Namely, there are inhomogeneous binomials of degree $2$ which
represent the ''obvious'' relations, which are of the form:
$$p_{ij}(0,1) p_{ij}(1,0) -p_{ij}(1,1).$$
Next, there are squarefree quadrics (homogeneous binomials of degree $2$):
$$ p_{ik}(1,0) p_{jl}(1,0) -p_{il}(1,0) p_{jk}(1,0),$$
and there are degree-three binomials that resemble the degree-three generator of the ideal $I_{\tilde\Z_3}$:
$$p_{ij}(0,1) p_{ik}(1,0) p_{jk}(0,1) -p_{ij}(1,0) p_{ik}(0,1) p_{jk}(1,0) .$$
Note that if we write $p_{jk}(1,0)$ and $j>k$, then we mean $p_{kj}(0,1)$, since for all $p_{jk}(a,b)$ we assume $j<k$.

In order to obtain a Gr\"obner basis for the ideal $I_{\tilde\Z_n}$, we can use what we know about $I_{\A_n}$. 
The advantage of studying $I_{\A_n}$ over $I_{\tilde\Z_n}$ is that it is a \emph{homogeneous} ideal; 
that is, both monomials in each binomial appear with the same degree.  
One can see that the ideal is homogeneous by inspecting the columns of the matrix: each column has the same $1$-norm.
The ideal of the simplified model admits a nice decomposition:
\begin{theorem} [{Decomposition of the ideal of the simplified model $\tilde\Z_n$}]
\label{thm:simplified-noRho-decomposition}
    With the notation as above, $I_{\tilde\Z_n} = I_{{\A}_n} + T $, where $T$ is the ideal generated by the binomials of the form
    $$p_{ij}(0,1) p_{ij}(1,0) -p_{ij}(1,1)$$
    for all pairs of nodes $i<j$.
\end{theorem}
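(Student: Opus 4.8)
The plan is to prove the two inclusions $I_{\tilde\Z_n} \supseteq I_{\A_n} + T$ and $I_{\tilde\Z_n} \subseteq I_{\A_n} + T$ separately, working throughout with the characterization of a toric ideal as generated by all binomials $p^u - p^v$ with $u - v$ in the kernel of the relevant design matrix. The easy direction is $\supseteq$. First I would check that each generator of $T$, namely $p_{ij}(1,0)\,p_{ij}(0,1) - p_{ij}(1,1)$, actually lies in $I_{\tilde\Z_n}$: this is immediate from the definition of $\tilde\Z_n$, since the column for $p_{ij}(1,1)$ is the sum of the columns for $p_{ij}(1,0)$ and $p_{ij}(0,1)$ (add $\alpha_i+\beta_j$ and $\alpha_j+\beta_i$ and $\theta+\theta$), exactly as already verified in the $n=2$ case $I_{\tilde\Z_2}=(p_{12}(1,0)p_{12}(0,1)-p_{12}(1,1))$. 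For $I_{\A_n} \subseteq I_{\tilde\Z_n}$: the matrix $\A_n$ is obtained from $\tilde\Z_n$ by deleting the columns indexed by the $p_{ij}(1,1)$ and deleting the resulting zero rows (the $\theta$-row survives but the point is that the remaining rows are the $\alpha$'s and $\beta$'s), so $\ker(\A_n)$ embeds into $\ker(\tilde\Z_n)$ by padding with zeros in the $p_{ij}(1,1)$ coordinates; hence every generator of $I_{\A_n}$ (a binomial in the $p_{ij}(1,0)$, $p_{ij}(0,1)$ variables only) is already a generator of $I_{\tilde\Z_n}$. This gives $\supseteq$.

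The substantive direction is $I_{\tilde\Z_n} \subseteq I_{\A_n} + T$. Here I would take an arbitrary binomial $p^u - p^v \in I_{\tilde\Z_n}$, so $\tilde\Z_n(u-v) = 0$, and show $p^u - p^v \in I_{\A_n} + T$. The key reduction step uses the generators of $T$ to rewrite monomials: modulo $T$, we may replace any occurrence of $p_{ij}(1,1)$ in a monomial by the product $p_{ij}(1,0)\,p_{ij}(0,1)$. Concretely, in the quotient ring $\mathbb{C}[p_{ij}(\bullet,\bullet)]/T$, the class of any monomial $p^u$ equals the class of a monomial $p^{u'}$ involving only the variables $p_{ij}(1,0)$ and $p_{ij}(0,1)$ (replace each $p_{ij}(1,1)^{c_{ij}}$ by $(p_{ij}(1,0)p_{ij}(0,1))^{c_{ij}}$). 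So it suffices to show that if $p^{u'}-p^{v'} \in I_{\tilde\Z_n}$ with $u', v'$ supported on the $\{(1,0),(0,1)\}$ variables only, then $p^{u'}-p^{v'} \in I_{\A_n}$. For such $u', v'$, the condition $\tilde\Z_n(u'-v') = 0$ restricted to the $\alpha$- and $\beta$-rows is exactly the condition $\A_n(u'-v')=0$ (the $\theta$-row gives one extra linear equation, but on vectors supported away from the $(1,1)$-columns the $\theta$-row is the constant row counting total degree, which is implied once we know the degrees match — and they do, because the $\alpha_i$-row sums already force the multidegrees to agree). Thus $u'-v' \in \ker(\A_n)$ and $p^{u'}-p^{v'} \in I_{\A_n}$, as needed. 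Assembling: given $p^u - p^v \in I_{\tilde\Z_n}$, write $p^u \equiv p^{u'}$ and $p^v \equiv p^{v'}$ modulo $T$ with $u',v'$ on the reduced variable set; then $p^{u'}-p^{v'} \in I_{\tilde\Z_n}$ (since $I_{\tilde\Z_n} \supseteq T$ and $p^u-p^v \in I_{\tilde\Z_n}$), hence $p^{u'}-p^{v'}\in I_{\A_n}$, hence $p^u - p^v = (p^u - p^{u'}) - (p^v - p^{v'}) + (p^{u'}-p^{v'}) \in T + I_{\A_n}$.

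The main obstacle I anticipate is the bookkeeping in the claim that "$\tilde\Z_n(w) = 0$ with $w$ supported on the $(1,0)$ and $(0,1)$ columns implies $\A_n(w) = 0$" — i.e., making sure that dropping the $\theta$-row (and any other rows that become zero after deleting the $(1,1)$-columns) loses no information on such vectors. This needs a careful look at which rows of $\tilde\Z_n$ are nonzero on the $(1,0)$/$(0,1)$ columns: the $\alpha_i$-row has a $1$ in column $p_{ij}(1,0)$ and in column $p_{ji}(0,1)$ (for $j \ne i$), similarly $\beta_i$, and the $\theta$-row is all $1$'s on these columns. The $\alpha$- and $\beta$-rows are precisely the rows of $\A_n$. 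The $\theta$-constraint $\sum_k w_k = 0$ on the reduced support then follows by summing, say, all the $\alpha_i$-row constraints (each reduced-support column $p_{ij}(1,0)$ is hit by exactly one $\alpha$-row, namely $\alpha_i$), so it is redundant. Once this is nailed down, the rest is the standard toric-ideal manipulation above; I would also remark that this argument shows, more precisely, that $T$ together with $\G_n$ (the cycle binomials from Theorem \ref{prop:G-basis-of-common-submatrix}) generate $I_{\tilde\Z_n}$, which is the form in which the decomposition will be used later.
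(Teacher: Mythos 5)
Your proof is correct and follows essentially the same route as the paper's: both use the generators of $T$ to rewrite each occurrence of $p_{ij}(1,1)$ as $p_{ij}(1,0)\,p_{ij}(0,1)$ (the paper does this via the telescoping identity $f = (p_{ij}(1,0)p_{ij}(0,1)m_1 - m_2) - m_1(p_{ij}(0,1)p_{ij}(1,0) - p_{ij}(1,1))$, you do it by reducing modulo $T$), thereby reducing an arbitrary binomial of $I_{\tilde\Z_n}$ to one supported on the $(1,0)/(0,1)$ variables, which is then observed to lie in $I_{\A_n}$. Your additional verification that the $\theta$-row imposes no extra condition on such reduced binomials fills in a step the paper dispatches with ``by definition,'' but the argument is the same.
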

\begin{proof}
    One inclusion ($\supset$) is clear.  To prove the other inclusion, consider $f\in I_{\tilde\Z_n}$ such that $p_{ij}(1,1)$ divides one of its terms for some $i,j$.
    Since the ideal is toric, it suffices to consider the case when $f$ is a binomial.  Then it can be written as
    $f= p_{ij}(1,1) m_1 - m_2$ where $m_1$ and $m_2$ are monomials.  But then we can write 
    \begin{align*}
        f = (p_{ij}(1,0)p_{ij}(0,1) m_1 - m_2) - m_1 (p_{ij}(0,1) p_{ij}(1,0) -p_{ij}(1,1)).
    \end{align*}
    Repeating this procedure if necessary, we can write $f$ as a combination of binomials whose terms are not divisible by any of the variables
    $p_{ij}(1,1)$ and those binomials that generate $T$.  To conclude, note that those not divisible by any of the $p_{ij}(1,1)$ are in the ideal $I_{{\A}_n}$ by definition.
\end{proof}

Combining the above results, we obtain a Markov (Gr\"obner) basis for the ideal of the simplified model: 
\begin{theorem}[{Gr\"obner basis for the simplified model $\tilde\Z_n$}]
 \label{thm:Gbasis-of-A(noRho)-using-graphs}
    Let $\G_T$ be the binomial generators of $T$,  that is,
    $$\G_T:=\{p_{ij}(0,1) p_{ij}(1,0) -p_{ij}(1,1)\}
    $$ for all pairs of nodes $i<j$.
    Let $\G_n$ be the set of binomials arising from the cycles of $G_n$, as in Theorem (\ref{prop:G-basis-of-common-submatrix}).

    Then the ideal of the simplified model $I_{\tilde\Z_n}$ has a Gr\"obner basis, and thus a Markov basis, consisting of the union of the
     binomials in  $\G_n$ and $\G_T$.
\end{theorem}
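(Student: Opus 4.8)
The plan is to reduce the statement to a single application of Buchberger's criterion, leaning on the two facts already available: the decomposition $I_{\tilde\Z_n}=I_{{\A}_n}+T$ (Theorem \ref{thm:simplified-noRho-decomposition}) and the fact that $\G_n$ is the \emph{universal} Gr\"obner basis of $I_{{\A}_n}$ (Theorem \ref{prop:G-basis-of-common-submatrix}). The decomposition already shows that $\G_n\cup\G_T$ generates $I_{\tilde\Z_n}$ as an ideal, since $\G_T$ generates $T$; so the only thing left is to exhibit a term order for which this generating set is actually a Gr\"obner basis, and then invoke the Fundamental Theorem of Markov bases to pass from Gr\"obner to Markov.

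First I would fix a block term order $\prec$ in which the variables $p_{ij}(1,1)$ form the top block and the variables $p_{ij}(1,0),p_{ij}(0,1)$ the bottom block (ordered there by any convenient term order). With this choice, each binomial $p_{ij}(0,1)p_{ij}(1,0)-p_{ij}(1,1)$ of $\G_T$ has leading term the single variable $p_{ij}(1,1)$, whereas each binomial of $\G_n$ is supported only on the $(1,0)$/$(0,1)$ variables and so has its leading term in the bottom block. Because $\G_n$ is the universal Gr\"obner basis of $I_{{\A}_n}$, it remains a Gr\"obner basis of $I_{{\A}_n}$ for the restriction of $\prec$ to the subring $k[p_{ij}(1,0),p_{ij}(0,1)]$.

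Next I would run Buchberger's criterion on $\G_n\cup\G_T$, splitting the $S$-pairs into three kinds. For a pair inside $\G_T$ coming from distinct dyads $\{i,j\}\neq\{k,l\}$, the leading terms $p_{ij}(1,1)$ and $p_{kl}(1,1)$ are coprime, so the $S$-polynomial reduces to $0$ by the coprimality (product) criterion. For a mixed pair, the leading term $p_{ij}(1,1)$ of the $\G_T$-element does not appear at all in the $\G_n$-element, hence the leading terms are again coprime and the $S$-polynomial reduces to $0$. For a pair inside $\G_n$, the $S$-polynomial lies in $k[p_{ij}(1,0),p_{ij}(0,1)]$ and reduces to $0$ modulo $\G_n$ since $\G_n$ is a Gr\"obner basis there; the point to check is that this reduction is not disturbed by the presence of $\G_T$, which holds because no leading term of a $\G_T$-element (a $(1,1)$-variable) divides any monomial arising in a computation confined to the bottom-block subring. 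Thus $\G_n\cup\G_T$ passes Buchberger's criterion and is a Gr\"obner basis with respect to $\prec$, hence a Markov basis.

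I expect the only genuinely delicate point to be that last bookkeeping step, namely that combining a Gr\"obner basis of $I_{{\A}_n}$ with a Gr\"obner basis of $T$ really does produce a Gr\"obner basis of the sum; this is true here precisely because $\prec$ eliminates the ``new'' variables $p_{ij}(1,1)$, so the $\G_T$-relations never interact with the $\G_n$-reductions. An equivalent and perhaps cleaner packaging of the same argument, which I would also mention, is that modulo $T$ one has a ring isomorphism $k[p_{ij}(1,0),p_{ij}(0,1),p_{ij}(1,1)]/T\cong k[p_{ij}(1,0),p_{ij}(0,1)]$ via the triangular substitution $p_{ij}(1,1)\mapsto p_{ij}(1,0)p_{ij}(0,1)$, under which $I_{\tilde\Z_n}/T$ corresponds to $I_{{\A}_n}$; transporting $\G_n$ back along this substitution and adjoining $\G_T$ yields the claimed basis.
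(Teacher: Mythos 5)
Your proposal is correct and follows essentially the same route as the paper's proof: use the decomposition $I_{\tilde\Z_n}=I_{{\A}_n}+T$ for generation, pick an elimination/block order putting the $p_{ij}(1,1)$ on top so the $\G_T$ elements have coprime single-variable leading terms, and observe that all $S$-pairs reduce to zero because the $(1,1)$-variables never appear in $\G_n$. Your explicit three-way case split of the $S$-pairs and the remark that universality of $\G_n$ lets you use the restricted order are slightly more careful than the paper's terser version of the same argument, but they are not a different proof.
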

\begin{proof}
    First we will show that the set $\G_T$ actually forms a Gr\"obner basis for $T$.
    Namely, pick an elimination order where the variables $p_{ij}(1,1)$ are greater then the remaining variables. Then, the degree-one terms
    of the binomials in $\G_T$ are the initial terms.  Since they are all relatively prime, the given generators form a Gr\"obner basis
    as all S-pairs reduce to zero (for details about Gr\"obner basis computations, the reader should refer to Buchberger's criterion \cite{Buch}
    as well as standard references \cite{CLO} and \cite{St}). \\
    Next, take any Gr\"obner basis $\G_n$ for the ideal $I_{\A_n}$ with respect to some order $\<$.
    According to Lemma \ref{thm:simplified-noRho-decomposition}, $\G_n\cup \G_T$ generates the ideal $I_{\tilde\Z_n}$.  Let $\<'$ be the refinement of the term order $\<$
    by the elimination order used for $\G_T$.  The we get that $\G_n\cup \G_T$ is in fact  a Gr\"obner basis for $I_{\tilde\Z_n}$ with respect to $\<'$,
    since no initial term in $\G_T$ appears as a term in $\G_n$ and thus all the S-pairs will again reduce to zero.

    What remains to be done is to find a Gr\"obner basis $\G_n$ for the ideal $I_{\A_n}$.  
    But this was done in Theorem (\ref{prop:G-basis-of-common-submatrix}). %See {\bf \textcolor{green}{include ref to previous result!!!}}  
\end{proof}

%\begin{cor}[\cite{St}]
%    The circuits and the Graver basis for the ideal $I_{\tilde{\A}_n}$ agree.\\
%    Therefore, the degree of any binomial in the Graver basis of $I_{\A_n}$ is at most $n$, and all circuits are squarefree.
%\end{cor}
%\begin{proof}
%    The first statement is just Corollary ?.? in \cite{St}. The second one follows by ?.?.
%    (Reason: $|supp(u)|\leq rank(\A)+1=2n$ and each entry of each circuit is at most $D(\A)=1$.)
%\end{proof}

%Moreover, computations suggest the following stronger statement:
%\\
%{\bf 
%	REMOVE THIS PARAGRAPH?? AND CONJECTURE ALSO!! \\
%}
%\begin{conj}
%    The binomials $\G_T\cup\G_n$ from Theorem \ref{thm:Gbasis-of-A(noRho)-using-graphs} are a
%    \emph{reduced} Gr\"obner basis of $I_{\tilde\Z_n}$ for every term order.
%\end{conj}
%This is a powerful statement, since a binomial in $I_{\A}$ is a circuit if it is irreducible and it has minimal support
%(that is, the number of variables appearing in the binomial is the smallest with respect to inclusion).
%But the circuits are precisely $\G_T\cup\G_n$.
%Therefore, for these models, not only the Markov bases, but also the reduced Gr\"obner bases will have very special structure,
%provided the conjecture is true.

To derive \emph{minimal} Markov bases for the simplified model, it remains to find a minimal generating set for the ideal of the edge subring of an incomplete bipartite graph.
To the best of our knowledge, there isn't a result that states these are generated in degrees two and three.  
In particular, we claim:
%	\marginpar{not crucial\\ at all for $p_1$ analysis, but algebraically nice.}
\begin{conj}%[PROOF NOT FINISHED!]
  \label{thm:minimal-Markov-simplified-noRho}
    A {minimal} Markov basis for the ideal $I_{\tilde\Z_n}$ consists of the elements of degrees $2$ and $3$. All inhomogeneous elements are of the form
    $$p_{ij}(0,1) p_{ij}(1,0) -p_{ij}(1,1).$$
    All quadrics are of the form $$ p_{ik}(1,0) p_{jl}(1,0) -p_{il}(1,0) p_{jk}(1,0),$$ where $i$, $j$, $k$, $l$ vary over all possible four-node
    subnetworks. \\
    All cubics are of the form
    $$p_{ij}(0,1) p_{ik}(1,0) p_{jk}(0,1) -p_{ij}(1,0) p_{ik}(0,1) p_{jk}(1,0), $$
    where $i$, $j$, and $k$ vary over all possible triangles ($3$-cycles) in the network.
\end{conj}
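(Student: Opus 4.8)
The plan is to peel off the inhomogeneous part of $I_{\tilde\Z_n}$ by a variable elimination, reducing everything to the toric ideal $I_{\A_n}$ of the edge ring $k[G_n]$, and then to resolve the remaining statement using the cycle structure of $G_n=K_{n,n}\setminus\{(i,i)\}_{i=1}^n$. By Theorem~\ref{thm:simplified-noRho-decomposition}, $I_{\tilde\Z_n}=I_{\A_n}+T$ with $T=(p_{ij}(1,0)p_{ij}(0,1)-p_{ij}(1,1):i<j)$, and no variable $p_{ij}(1,1)$ occurs in a generator of $I_{\A_n}$. The substitution $p_{ij}(1,1)\mapsto p_{ij}(1,0)p_{ij}(0,1)$ identifies the coordinate ring of the simplified model modulo $(T)$ with $\mathbb C[p_{ij}(1,0),p_{ij}(0,1)]$ and carries $I_{\tilde\Z_n}$ onto $I_{\A_n}$; a standard elimination-of-variables argument (the degree-one part of the generator of $T$ indexed by $\{i,j\}$ is the indeterminate $p_{ij}(1,1)$, and these are linearly independent and absent from $I_{\A_n}$) then shows that a minimal generating set of $I_{\tilde\Z_n}$ is the disjoint union of $\{p_{ij}(0,1)p_{ij}(1,0)-p_{ij}(1,1)\}_{i<j}$ and the preimage of a minimal generating set of $I_{\A_n}$. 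The binomials $p_{ij}(0,1)p_{ij}(1,0)-p_{ij}(1,1)$ are indispensable (they lie in every Markov basis), because the fiber of their multidegree consists of exactly the two monomials $p_{ij}(1,1)$ and $p_{ij}(1,0)p_{ij}(0,1)$; since $I_{\A_n}$ is homogeneous these account for all the inhomogeneous generators. So it remains to prove that $I_{\A_n}$ has as its unique minimal generating set the quadrics $p_{ik}(1,0)p_{jl}(1,0)-p_{il}(1,0)p_{jk}(1,0)$ together with the cubics $p_{ij}(0,1)p_{ik}(1,0)p_{jk}(0,1)-p_{ij}(1,0)p_{ik}(0,1)p_{jk}(1,0)$. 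Under the correspondence $p_{ab}(1,0)\leftrightarrow\alpha_a\beta_b$, $p_{ab}(0,1)\leftrightarrow\alpha_b\beta_a$ of Lemma~\ref{lm:homog-part-of-our-ideal-corresponds-to-toric-ideal-of-graph}, these are exactly the binomials of the $4$-cycles of $G_n$ and of the \emph{chordless} (induced) $6$-cycles of $G_n$, respectively.

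For generation I would start from Theorem~\ref{prop:G-basis-of-common-submatrix}: $I_{\A_n}$ is generated by the binomials $f_c$ of \emph{all} cycles $c$ of $G_n$, so it suffices to express every $f_c$ in terms of the $4$-cycle and induced-$6$-cycle binomials, which I would do by induction on the length of $c$. If $c$ has a genuine chord $e$ (an edge of $G_n$ joining two non-consecutive vertices of $c$), then $e$ splits $c$ into two strictly shorter cycles $c_1,c_2$ of $G_n$, and the standard edge-ring identity (see \cite{Vill}, \cite{OH}) gives $f_c=m_1 f_{c_1}\pm m_2 f_{c_2}$ for monomials $m_1,m_2$, the chord variable cancelling so that this is an honest identity in $I_{\A_n}$; the inductive hypothesis then applies. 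The base cases are lengths $4$ and $6$: a length-$4$ cycle is one of the defining quadrics; an induced $6$-cycle is one of the defining cubics; and a non-induced $6$-cycle has a chord that splits it into two $4$-cycles and so is already reduced. The remaining combinatorial point is that \emph{every cycle of $G_n$ of length at least $8$ has a genuine chord}, and this is where the shape of $G_n$ enters: writing $c=(\alpha_{i_1},\beta_{j_1},\alpha_{i_2},\ldots,\alpha_{i_k},\beta_{j_k})$ with $k\ge 4$, each pair $(\alpha_{i_1},\beta_{j_b})$ with $2\le b\le k-1$ fails to be a genuine chord only if $i_1=j_b$; if all $k-2\ge 2$ of them fail then $j_2=j_3$, contradicting that the vertices of a cycle are distinct. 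Hence $G_n$ has induced cycles only of lengths $4$ and $6$, the induction goes through, and the quadrics and cubics above generate $I_{\A_n}$.

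For minimality I would show each listed quadric and cubic is indispensable by computing the fiber of its multidegree. A quadric $p_{ik}(1,0)p_{jl}(1,0)-p_{il}(1,0)p_{jk}(1,0)$ (with $i,j,k,l$ pairwise distinct, which is forced for all four edges to lie in $G_n$) has multidegree $\alpha_i\alpha_j\beta_k\beta_l$, and the monomials of $k[G_n]$ of that multidegree are precisely the two perfect matchings of $\{\alpha_i,\alpha_j\}$ onto $\{\beta_k,\beta_l\}$; since the fiber has only two elements, the quadric lies in every Markov basis. Likewise an induced-$6$-cycle cubic has multidegree $\alpha_i\alpha_j\alpha_k\beta_i\beta_j\beta_k$, and the monomials of $k[G_n]$ of that multidegree are the perfect matchings of $\{i,j,k\}$ to itself avoiding the diagonal, i.e.\ the two derangements of a $3$-element set; again the fiber has two elements, so the cubic is indispensable. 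Since the quadrics and cubics generate $I_{\A_n}$ and are all indispensable, they form the unique minimal generating set of $I_{\A_n}$; in particular no other quadrics or cubics occur and there are no minimal generators of higher degree. Combining this with the reduction above and the Fundamental Theorem of Markov bases (\cite{diac:stur:1998}) yields exactly the minimal Markov basis asserted.

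The main obstacle is the combinatorial fact invoked in the generation step, namely that $K_{n,n}$ with a perfect matching deleted has no induced cycle of length $\ge 8$ — equivalently, that its edge ring is generated in degrees at most $3$, which (as remarked just before the statement) does not appear to be recorded in the literature; once this is available, the remainder is bookkeeping with the cycle/chord calculus of \cite{Vill} and \cite{OH}. A secondary point requiring care is verifying that the chord-splitting relation is a genuine membership in $I_{\A_n}$, rather than only after inverting the chord variable, so that the induction stays inside the subideal generated by the short binomials; and likewise that the elimination-of-variables step in the first paragraph does not introduce unexpected cancellation (a graded Nakayama argument after localizing at the irrelevant maximal ideal handles this).
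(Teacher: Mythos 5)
The statement you are proving is left \emph{unproved} in the paper: it is stated as a conjecture, and the authors explicitly remark just before it that they know of no result asserting that the toric ideal of the edge subring of $K_{n,n}$ minus a perfect matching is generated in degrees $2$ and $3$. So there is no proof in the paper to compare against; what you have written is an attempt to settle the open claim, building on the two ingredients the paper does supply (the decomposition $I_{\tilde\Z_n}=I_{\A_n}+T$ of Theorem~\ref{thm:simplified-noRho-decomposition} and the cycle description of the Graver basis of $I_{\A_n}$ in Theorem~\ref{prop:G-basis-of-common-submatrix}). Your argument appears to be correct, and the new content is exactly the piece the authors identify as missing: the observation that $G_n$ has no chordless cycle of length $\ge 8$. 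I checked the pigeonhole step (if every $(\alpha_{i_1},\beta_{j_b})$, $2\le b\le k-1$, fails to be an edge then $j_2=j_3=i_1$, contradicting distinctness of cycle vertices, so any cycle with $k\ge 4$ has a genuine chord), the chord-splitting identity ($f_c=C\,f_{c_1}+B\,f_{c_2}$ with the chord variable cancelling, so the reduction stays inside the subideal), and the two base cases (non-induced $6$-cycles split into two $4$-cycles; induced $6$-cycles are forced to have $i_1=j_2$, $i_2=j_3$, $i_3=j_1$ and give precisely the listed cubics). The indispensability computations also check out, including the point that the fibers must be taken in the full ring with the $p_{ij}(1,1)$ variables and the $\theta$-grading present: for the homogeneous quadric the $\alpha$- and $\beta$-contents are disjoint so no $p_{ab}(1,1)$ can appear, for the cubic a term $p_{ab}(1,1)p_{cd}(1,0)$ would force $c=d$, and in each case the fiber has exactly two coprime monomials, so the binomial survives every multigraded minimal generating set. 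Together with generation this gives uniqueness of the minimal generating set, which is slightly stronger than the conjectured statement. Two presentational caveats: the first-paragraph ``elimination of variables'' discussion is redundant once you have the direct fiber computations (and is the least rigorous part as written, so I would lean on the fibers rather than on it), and you should state explicitly that ``minimal Markov basis'' is being interpreted via the Fundamental Theorem as a minimal binomial generating set of the toric ideal, which is the reading the paper intends.
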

%\begin{proof}
%	It is clear that all of these elements are contained in the ideal since they are contained in the Graver basis.  However it is nontrivial to prove that they, in fact, are a 	generating set.
%			\marginpar{to do??}
%	By Theorem \ref{thm:simplified-noRho-decomposition}, we need to find a minimal generating set of $I_{\A_n}$ and of $T$.  
%	It is clear that the inhomogeneous degree-two generators of $T$ are minimal, by construction.  It remains to prove that 
%	primitive binomials of degree $2$ and $3$ generate $I_{\A_n}$.
%	%	 I used to think Villarreal already states (or Hibi?) that these are generated
%	%		by  four- and six- cycles (so degrees two and three). No.  \\
%	Guess: I think I can prove it by induction..... see NOTES.\\	Known: for the complete bipartite graph, it's generated by quadrics (see 9.1.1 and 9.1.11 in \cite{Vill}).  \\
%	Unknown: for an incomplete bipartite, in general.
%\end{proof}

%\begin{rmk} 
%	\marginpar{ to remove if can't prove above result.}
%    In terms of the network, the two terms in each cubic correspond to the clockwise and counterclockwise directed $3$-cycles (or triangles),
%    while the quadrics represent a pair of directed edges $i\to k$ and $j\to l$ in one term and the pair $i\to l$ and $j\to k$ in the other.
%\end{rmk}

%%%%%%%%%%%%%%%%%%%%%%%%%%%%%%%%%%%%%%%%%%%%%%%%%%%%%%%%%%%%%%
\subsubsection{Case II: constant reciprocation} {\bf{($\rho_{ij}=\rho$, simplified model)}}
\label{subsec:simplified-constantRho}

There is a small but crucial difference between $\tilde\Z_n$ and $\tilde\C_n$: one more row, 
 representing the constant $\rho$ added to those columns indexed $p_{ij}(1,1)$ for all $i,j\in [n]$.
This row is filled with the pattern $0,0,1$ repeated as many times as necessary.
Note that adding the extra row makes this ideal {\emph{homogeneous}}, that is, the two terms of each binomial have the same degree.
(Note that homogeneity is easy to verify: the dot product of each column of $\tilde\C_n$ with the vector $w=[1,\dots,1,-n-1]$ results in the
vector $[1,\dots,1]$, as required, for example, by Lemma 4.14. in \cite{St}.)

For two nodes, the ideal is trivial.

For $n=3$ nodes, the Markov basis consists of $4$ binomials of degree $3$ of the following forms:
\begin{align*}
 p_{ik}(1,0) p_{ik}(0,1) p_{jk}(1,1) &- p_{ik}(1,1) p_{jk}(1,0) p_{jk}(0,1),\\
 p_{ij}(1,0) p_{ik}(0,1) p_{jk}(1,0) &- p_{ij}(0,1) p_{ik}(1,0) p_{jk}(0,1),
 \end{align*}
 and $6$ of degree $4$ of the form:
$$ p_{ij}(0,1)^2 p_{ik}(1,1) p_{jk}(0,1) - p_{ij}(1,1) p_{ik}(0,1)^2 p_{jk}(1,0). $$
For $n\geq 4$, the Markov bases consist of elements of the above type, but also include quadrics of the form:
\begin{align*}
p_{ik}(1,0) p_{jl}(1,0) &- p_{il}(1,0) p_{jk}(1,0),\\
p_{ij}(1,1) p_{kl}(1,1) &- p_{ik}(1,1) p_{jl}(1,1).
\end{align*}

\begin{conj}  [{Minimal Markov basis of the simplified model $\tilde\C_n$}]
	The ideal $I_{\tilde\C_n}$ is generated in degrees $2$, $3$, and $4$, and the binomials are of the form described above.
\end{conj}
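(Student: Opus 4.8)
The plan is to imitate the treatment of $\tilde\Z_n$ in Theorems~\ref{thm:simplified-noRho-decomposition}--\ref{thm:Gbasis-of-A(noRho)-using-graphs}, the only new feature being the extra $\rho$-row, which forces each binomial in $I_{\tilde\C_n}$ to carry the same number of $(1,1)$-variables in each of its two monomials. The first step is to record the structure of $\tilde\C_n$: the column indexed by $p_{ij}(1,1)$ agrees with the sum of the columns indexed by $p_{ij}(1,0)$ and $p_{ij}(0,1)$ in every row \emph{except} the $\rho$-row (where it has a $1$ and they have $0$'s); and, restricted to the $(1,1)$-columns alone, $\tilde\C_n$ is, up to a common factor coming from the $\theta$- and $\rho$-rows, the incidence matrix of the complete graph $K_n$ in the ``diagonal'' monomials $\gamma_i := \alpha_i\beta_i$, since $p_{ij}(1,1)\mapsto \gamma_i\gamma_j\theta^2\rho$. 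Thus $\ker(\tilde\C_n)$ is the sublattice of $\ker(\tilde\Z_n)$ on which $\sum_{i<j} v_{p_{ij}(1,1)}=0$, so $I_{\tilde\C_n}\subseteq I_{\tilde\Z_n}$, and its two ``ends'' are governed by $I_{\A_n}$ (Lemma~\ref{lm:homog-part-of-our-ideal-corresponds-to-toric-ideal-of-graph} and Theorem~\ref{prop:G-basis-of-common-submatrix}) and by the toric ideal of the edge ring $k[K_n]$.

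The second step is a decomposition $I_{\tilde\C_n}=I_{\A_n}+J$ parallel to Theorem~\ref{thm:simplified-noRho-decomposition}, where $J$ is the ``reciprocation ideal.'' Given a binomial $f=m_1-m_2\in I_{\tilde\C_n}$, one rewrites: whenever $p_{ij}(1,1)\mid m_1$ one replaces it by $p_{ij}(1,0)p_{ij}(0,1)$ and simultaneously replaces an occurrence of $p_{k\ell}(1,0)p_{k\ell}(0,1)$ in $m_1$ by $p_{k\ell}(1,1)$, the ``cost'' being the binomials $p_{ij}(1,1)p_{k\ell}(1,0)p_{k\ell}(0,1)-p_{ij}(1,0)p_{ij}(0,1)p_{k\ell}(1,1)$. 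When $\{i,j\}$ and $\{k,\ell\}$ share a node these are exactly the conjectured cubics $p_{ik}(1,0)p_{ik}(0,1)p_{jk}(1,1)-p_{ik}(1,1)p_{jk}(1,0)p_{jk}(0,1)$; when $\{i,j\}\cap\{k,\ell\}=\emptyset$ they should reduce modulo $I_{\A_n}$ together with the quadrics $p_{ij}(1,1)p_{k\ell}(1,1)-p_{ik}(1,1)p_{j\ell}(1,1)$ coming from the $4$-cycles of $K_n$ (the edge ring of $K_n$ having a quadratically generated toric ideal: every cycle of $K_n$ of length $\ge 4$ has a chord and any two vertex-disjoint triangles of $K_n$ are joined by an edge, so the Ohsugi--Hibi criterion \cite{OH} applies), while the quartic $p_{ij}(0,1)^2p_{ik}(1,1)p_{jk}(0,1)-p_{ij}(1,1)p_{ik}(0,1)^2p_{jk}(1,0)$ is exactly what is needed when the rewriting of a triangle move of $I_{\A_n}$ is obstructed and a reciprocated edge must be ``borrowed.'' To make this rigorous I would fix an elimination term order in which the $(1,1)$-variables are largest, refined by the cycle order used in Theorem~\ref{prop:G-basis-of-common-submatrix}, collect the union of the conjectured binomials, and verify Buchberger's criterion \cite{Buch}: the S-pairs split into ones internal to $I_{\A_n}$ (covered by Theorem~\ref{prop:G-basis-of-common-submatrix}), ones internal to the $K_n$-quadrics (covered by the quadratic generation of $k[K_n]$), and the mixed ones involving the cubic and quartic ``bridge'' binomials; checking that the latter reduce to zero within degree $\le 4$ is the heart of the matter.

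The hard part is precisely this last reduction, i.e. showing that no generator of degree $\ge 5$ survives and that the five listed families are minimal. Unlike $I_{\A_n}$, whose Graver basis is completely understood because $G_n$ is bipartite (Theorem~\ref{prop:G-basis-of-common-submatrix}, after \cite{OH,Vill}), and unlike $k[K_n]$, whose relations are quadratic, the matrix $\tilde\C_n$ is \emph{not} the incidence matrix of any graph --- the $(1,1)$-columns behave like ``$\rho$-marked double edges'' glued onto the bipartite graph $G_n$ --- so no edge-ring dictionary applies verbatim, and the mixed primitive binomials seem to require a direct classification by their support pattern on dyads and on vertices. One natural attempt is to encode $\tilde\C_n$ as the edge ring (or the toric ideal of a small cell complex) of an auxiliary graph-like object and invoke the degree bounds of \cite{OH}; I expect this to fail to be a genuine edge ring, so that a hands-on combinatorial argument is unavoidable, and it would in any case subsume the still-open Conjecture~\ref{thm:minimal-Markov-simplified-noRho} for $I_{\tilde\Z_n}$. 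A complete proof would most plausibly go through an analogue of the recursion in Proposition~\ref{prop:recursion-for-Gbasis-of-A(noRho)} (using that the $(n-1)$-node design matrices are submatrices, cf.\ \cite{St}), base-casing on the small-$n$ computations with {\tt 4ti2} and running an inductive ``no new generators appear'' argument --- which is presumably why the statement is left as a conjecture.
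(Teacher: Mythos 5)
The statement you are addressing is labelled a \emph{conjecture} in the paper, and the authors explicitly state that ``Gr\"obner bases or even Markov bases in this case remain an open problem''; there is no proof in the paper to compare yours against. Your write-up is candid that it is a plan rather than a proof, and its structural observations are correct and consistent with how the paper handles the other two cases: the kernel of $\tilde\C_n$ is indeed the sublattice of $\ker(\tilde\Z_n)$ on which the total $(1,1)$-degree is balanced, so $I_{\tilde\C_n}\subseteq I_{\tilde\Z_n}$; and the $(1,1)$-columns alone do reproduce the incidence matrix of $K_n$ in the variables $\gamma_i=\alpha_i\beta_i$, whence the quadrics $p_{ij}(1,1)p_{kl}(1,1)-p_{ik}(1,1)p_{jl}(1,1)$, exactly as in Theorem \ref{thm:simplified-EdgeRho-decomposition}. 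But this does not yet prove the conjecture, and you correctly flag that the degree bound and minimality are the missing heart of the argument.

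The concrete gap I would press you on is the proposed decomposition $I_{\tilde\C_n}=I_{\A_n}+J$ and the claim that the ``disjoint-dyad'' bridge binomials reduce modulo $I_{\A_n}$ and the $K_n$-quadrics. Unlike the $\tilde\E_n$ case, here $I_{\A_n}$ and the $K_n$-part are \emph{not} supported on disjoint variable sets in any useful sense: the single $\rho$-row couples the $(1,1)$-variables to the directed-edge variables only through $\theta$, so mixed binomials are unavoidable and the ``sum of two ideals'' argument of Theorem \ref{thm:simplified-EdgeRho-decomposition} does not transfer. A test case: for $n=4$ the fiber of the multidegree $\gamma_1\gamma_2\gamma_3\gamma_4\theta^4\rho$ consists of exactly the three monomials $p_{12}(1,1)p_{34}(1,0)p_{34}(0,1)$, $p_{13}(1,1)p_{24}(1,0)p_{24}(0,1)$, $p_{14}(1,1)p_{23}(1,0)p_{23}(0,1)$. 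No quadric of $I_{\A_4}$, no $K_4$-quadric (which needs two $(1,1)$-variables), and no shared-node cubic applies inside this fiber, so it cannot be connected by the families you (or the paper) list; a minimal generating set must contain cubics of the form $p_{ab}(1,1)p_{cd}(1,0)p_{cd}(0,1)-p_{ac}(1,1)p_{bd}(1,0)p_{bd}(0,1)$ with four distinct indices. So your reduction step for disjoint dyads fails as stated, and in fact the conjecture's phrase ``of the form described above'' needs to be read generously (or amended) for the statement even to be plausible. Any genuine proof would have to classify the primitive mixed binomials directly --- your own observation that $\tilde\C_n$ is not the incidence matrix of a graph, so the Ohsugi--Hibi machinery does not apply verbatim, is exactly why the authors leave this open.
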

Note: due to the existence of $\rho_{ij}$, we do not get the relations $\G_T$ from the case $\rho_{ij}=0$.
Recall they were of the form $p_{ij}(1,0)p_{ij}(0,1)-p_{ij}(1,1)$. They cannot be in the ideal in this case, since the bi-directed edge between $i$ and $j$
is valued differently then the two single edges $i\to j$ and $j\to i$.

%This is all that I can say when $\rho_{ij}$ is constant. In some sense this is a worst case (among the three) from an algebraic point of view;
%the circuits do not equal the Graver basis for any network except the one on $3$ nodes.
Gr\"obner bases or even Markov bases in this case remain an open problem that we will continue to study.

%%%%%%%%%%%%%%%%%%%%%%%%%%%%%%%%%%%%%%%%%%%%%%%%%%%%%%%%%%%%%%
\smallskip

\subsubsection{Case III: edge-dependent reciprocation} {\bf{($\rho_{ij}=\rho+\rho_i+\rho_j$, simplified model)}}
\label{subsec:simplified-RhoEdgeDependent}

To construct the design matrices $\tilde\Z_n$ for this case, we start with the matrix $\tilde\C_n$ from the case $\rho_{ij}=\rho$,
and introduce $n$ more rows indexed by $\rho_1,\dots, \rho_n$.
Every third column of the new matrix, indexed by $p_{ij}(1,1)$, has two nonzero entries: a $1$ in the rows corresponding to $\rho_i$ and $\rho_j$.
For example, when $n=3$, the matrix is:
\begin{align*}\tilde\E_3 =  \mtx
1&0&1&1&0&1&0&0&0\\
0&1&1&0&0&0&1&0&1\\
0&0&0&0&1&1&0&1&1\\
0&1&1&0&1&1&0&0&0\\
1&0&1&0&0&0&0&1&1\\
0&0&0&1&0&1&1&0&1\\
1&1&2&1&1&2&1&1&2\\
0&0&1&0&0&1&0&0&1\\
0&0&1&0&0&1&0&0&0\\
0&0&1&0&0&0&0&0&1\\
0&0&0&0&0&1&0&0&1
\mtxend .
\end{align*}
The kernel of this matrix is generated by one vector, and in fact the ideal $I_{\tilde\E_3}$ is the principal ideal
$$ I_{\tilde\E_3} = ( p_{12}(1,0)p_{23}(1,0)p_{13}(0,1) - p_{12}(0,1)p_{23}(0,1)p_{13}(1,0) ) . $$
When we calculate the ideals for larger networks, some more familiar binomials appear.

\begin{theorem}  [{Decomposition of the simplified model $\tilde\E_n$}]
\label{thm:simplified-EdgeRho-decomposition}
 	Let $I_{\A_n}$ be the ideal as in Section \ref{subsec:common-submatrix-ideal}.
    Let $Q$ be the ideal generated by the quadrics of the form
    \begin{align*}
        p_{ij}(1,1)p_{kl}(1,1)-p_{ik}(1,1)p_{jl}(1,1)
    \end{align*}
    for each set of indices $1\leq i,j,k,l \leq n$. Then:
    \begin{align*}
        I_{\tilde\E_n} = I_{\A_n} + Q
    \end{align*}
    for every $n\geq 4$.
\end{theorem}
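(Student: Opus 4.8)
The plan is to follow the pattern of Theorem~\ref{thm:simplified-noRho-decomposition}, but since the ``rewriting'' of the variables $p_{ij}(1,1)$ in terms of single edges is no longer available (the parameters $\rho,\rho_i,\rho_j$ are in the way), I would replace it by a \emph{separation-of-variables} argument that peels the double-edge variables $p_{ij}(1,1)$ off from the single-edge variables $p_{ij}(1,0),p_{ij}(0,1)$. The inclusion $\supseteq$ is routine: one checks directly that the exponent difference of each generator of $Q$ lies in $\ker(\tilde\E_n)$ (only the rows indexed by $\rho,\rho_m,\alpha_m,\beta_m,\theta$ are involved), and $I_{\A_n}\subseteq I_{\tilde\E_n}$ because, by construction, $\A_n$ is the submatrix of $\tilde\E_n$ obtained by deleting the $p_{ij}(1,1)$-columns together with the rows $\rho,\rho_1,\dots,\rho_n$ which then become identically zero; hence any binomial relation among the single-edge variables lifts verbatim.

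For $\subseteq$ it suffices, $I_{\tilde\E_n}$ being toric, to treat an arbitrary binomial $f=p^u-p^v\in I_{\tilde\E_n}$. Write $u=(u',u'')$, $v=(v',v'')$ for the single- and double-edge exponents, and regard $u''$, $v''$ as loopless multigraphs on $\{1,\dots,n\}$. Equating the entries of $\tilde\E_n u$ and $\tilde\E_n v$ in the row indexed by $\rho_i$ gives $\sum_{j\neq i}u''_{ij}=\sum_{j\neq i}v''_{ij}$ for each $i$, i.e.\ $u''$ and $v''$ have the \emph{same degree sequence}. The heart of the matter is then this: the submatrix of $\tilde\E_n$ on the $p_{ij}(1,1)$-columns is, up to the two rows $\theta$ and $\rho$ that are constant along it, the incidence matrix of the complete graph $K_n$ on the algebraically independent ``super-variables'' $z_i:=\alpha_i\beta_i\rho_i$; since every relation forces $\sum_{i<j}u''_{ij}=\sum_{i<j}v''_{ij}$, the constant factor $\theta^2\rho$ cancels and $p^{u''}-p^{v''}$ lies in the toric ideal of the edge subring $k[K_n]=k[z_iz_j]$. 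I would then invoke the known fact that for $n\geq 4$ this ideal is generated by the degree-two binomials appearing in $Q$ --- equivalently, the classical statement that any two loopless multigraphs with a common degree sequence are joined by a sequence of swaps $\{ij,kl\}\leftrightarrow\{ik,jl\}$ (each of which contributes a multiple of a generator of $Q$) --- to conclude $p^{u''}-p^{v''}\in Q$.

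Granting this, the proof closes quickly: subtracting $p^{u'}\bigl(p^{u''}-p^{v''}\bigr)\in Q$ from $f$ leaves $p^{v''}\bigl(p^{u'}-p^{v'}\bigr)$, which therefore lies in the prime ideal $I_{\tilde\E_n}$; since no monomial lies in a toric ideal we have $p^{v''}\notin I_{\tilde\E_n}$, so primality yields $p^{u'}-p^{v'}\in I_{\tilde\E_n}$. This last binomial is supported on single-edge variables only, and since the columns of $\A_n$ are precisely the single-edge columns of $\tilde\E_n$ (with the identically-zero rows removed), $u'-v'\in\ker(\A_n)$, whence $p^{u'}-p^{v'}\in I_{\A_n}$ and $f\in Q+I_{\A_n}$, as desired.

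I expect the main obstacle to be the ``heart of the matter'' step: pinning down that the toric ideal of the edge subring of $K_n$ is generated in degree two by exactly the quadrics listed in $Q$ for every $n\geq 4$, and checking that the degenerate index choices (where the swap would create a loop or be trivial) are precisely the ones that need not, and do not, contribute. This plays the role that $\G_T$ played in Theorem~\ref{thm:Gbasis-of-A(noRho)-using-graphs}; I would either cite it from the literature on edge rings of complete graphs (cf.\ \cite{Vill}) or from the theory of the second hypersimplex, or prove it directly via swap-connectivity of multigraph realizations of a degree sequence. A minor but necessary bookkeeping point is to verify explicitly that $\A_n$ coincides with the single-edge submatrix of $\tilde\E_n$ after deleting zero rows (and that the presence of the all-ones $\theta$ row does not change the kernel), so that single-edge relations genuinely descend to $I_{\A_n}$.
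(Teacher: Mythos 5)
Your proposal is correct, and its overall architecture coincides with the paper's: the same decomposition $I_{\A_n}+Q$, the same identification of the $p_{ij}(1,1)$-columns (via the rows $\rho_1,\dots,\rho_n$) with the incidence matrix of $K_n$, and the same external input that the toric ideal of the edge ring $k[K_n]$ is generated by the quadrics of $Q$ for $n\geq 4$. Where you genuinely diverge is in the inclusion $I_{\tilde\E_n}\subseteq I_{\A_n}+Q$. The paper argues by a primitivity reduction: any binomial whose terms involve a $p_{ij}(1,1)$ is shown, case by case, to fail to be primitive against some generator of $Q$, and one reduces until only single-edge variables remain; this is stated somewhat informally (``the pattern continues''). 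You instead split each binomial $p^u-p^v$ into its single-edge and double-edge parts in one step, observe that the $\rho_i$-rows force $u''$ and $v''$ to have equal degree sequences so that $p^{u''}-p^{v''}\in Q$ outright, and then recover $p^{u'}-p^{v'}\in I_{\tilde\E_n}$ by subtracting $p^{u'}(p^{u''}-p^{v''})$ and invoking primality of the toric ideal (no monomial lies in it). This buys a cleaner and arguably more airtight argument than the paper's sketch, at the cost of having to verify the two bookkeeping points you already flag: that the $\theta$ and $\rho$ rows, being rational combinations of the incidence rows on the relevant columns, do not change the kernels, and that the degenerate index choices in $Q$ are harmless. Both of those check out, so I see no gap.
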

\begin{proof}
    Consider the submatrix of $\tilde\E_n$ consisting of those columns that have only zeros in the last $n$ rows.  Erasing those zeros, we get
    precisely the columns of $\A_n$.  Thus $I_{\tilde\E_n} \supset I_{\A_n}$.

    Similarly, the ideal of relations among those columns that have nonzero entries in the last $n$ rows is also contained in $I_{\tilde\E_n}$.
    We will now show that this ideal equals $Q$.
    To simplify notation, let $M$ be the matrix consisting of the columns in question, that is, those that are indexed by $p_{ij}(1,1)$ for all pairs
    $i<j$. Recalling the definition of the action of the simplified parametrization map on $p_{ij}(1,1)$:
    \begin{align*}
        p_{ij}(1,1) \mapsto \alpha_i \beta_j \alpha_j \beta_i \theta \rho \rho_j \rho_j,
    \end{align*}
    and the fact that the last $n$ rows of $M$ are indexed by $\rho_1,\dots,\rho_n$,
    we see that to study the toric ideal $I_M$ it suffices to study the ideal $I_{M'}$ where $M'$ is the submatrix consisting of the
    last $n$ rows of $M$.
    But $I_{M'}$ is a well-studied toric ideal! Namely, $M'$ agrees with the incidence matrix of the complete graph $K_n$ on $n$ vertices:
    for each pair $\{i,j\}$ there exists an edge $(i,j)$.  Therefore the toric ideal $I_{M'}$ agrees with  the toric ideal of the edge ring $k[K_n]$,
    where the vertices of $K_n$ are labelled by the nodes $1,\dots,n$, and an edge between $i$ and $j$ represents the variable $p_{ij}(1,1)$.
    It is well-known (see, for example, \cite{OH}) that $I_{M'}$ is generated by quadrics.
    Moreover, from \cite{CoNa} and references given therein (see also Corollary 9.2.3. in \cite{Vill}) 
    it follows that these quadrics be interpreted as $2\times 2$-minors of a certain tableaux.
    But our definition of the ideal $Q$ is precisely the set of those $2\times 2$-minors, thus $I_{M'}=Q$ and we obtain $I_{\tilde\E_n} \supset Q$.

    To complete the proof, we need to show the reverse inclusion.  To that end, let $f=f^+ -f^- \in I_{\tilde\E_n}$.
    If no $p_{ij}(1,1)$ divides either term of $f$, then $f \in I_{\A_n}$ and we are done.
    On the other hand, suppose $p_{ij}(1,1) | f^+$ for some pair $i,j$.
    Then the definition of $I_{\tilde\E_n}$ implies that one of the following conditions are satisfied:
    \begin{enumerate}
        \item $p_{ij}(1,1) | f^- $. But then the binomial $f$ is not primitive, and thus it cannot be required in any minimal generating set, or a
            Gr\"obner basis, of $I_{\tilde\E_n}$.
        \item $p_{kl}(1,1) | f^+ $ for some other pair of indices $k,l$. But this in turn implies that
        \begin{enumerate}
            \item $p_{ij}(1,1) p_{kl}(1,1) | f^-$ and $f$ fails to be primitive trivially; or
            \item without loss of generality, $p_{ik}(1,1) p_{jl}(1,1) | f^-$ and $f$ fails to be primitive by the quadric in $Q$; or
            \item $f^+$ is divisible by another variable $p_{st}(1,1)$, and the pattern continues.
        \end{enumerate}
    \end{enumerate}
    In general, it is clear that any $f$ whose terms are divisible by the variables representing columns of $M$ will fail to be primitive
    by one of the binomials in the ideal $Q$.  Therefore, the ideal $I_{\tilde\E_n}$ is generated by the binomials of $I_{{\A}_n}$ and $Q$,
    as desired.
\end{proof}

Next we obtain a Gr\"obner basis for our toric ideal.
Recall from Section \ref{subsec:common-submatrix-ideal} that $G_n = K_{n,n}\backslash \{(i,i)\}_{i=1}^n$.
We have seen that the graph $G_n$ played an essential role in the case when $\rho_{ij}=0$, and it will play an essential role here as well, 
since it is essential in describing the Gr\"obner bases of the common submatrix $\A_n$.  However, in order to study the Gr\"obner basis
of the ideal $Q$, we need to use graphs with more edges. 
It comes as no surprise, then, that these ideals will have a more complicated Gr\"obner basis then those from Section \ref{subsec:common-submatrix-ideal}.
Thus we need to generalize Definition \ref{defn:binomials-from-cycles} for the \emph{complete graph} $K_n$ on $n$ vertices:
\begin{defn}\rm
    \label{defn:binomials-from-walks}
    As before, following \cite{OH} and \cite{Vill}, we say that a binomial $f_w$
    \emph{arises from} an even closed walk $w$ of $K_n$ if:
    \begin{enumerate}
    \item $w$ is an even closed walk $w$ of $K_n$; namely, $w$ is a closed walk on a subset of the vertices of $K_n$ with the property that
          it is of even length (even number of edges)
          $$  w = ( \{v_1,v_2\},\dots \{v_{2k-1},v_{2k}\} ) ,
          $$
          with $1\leq v_1,\dots,v_{2k}\leq n$.  The \emph{closed} condition requires that $v_{2k}=v_1$.
    \item $f_w$ is the binomial obtained by placing the variables corresponding to the edges of the walk $w$ alternately in one and then the other
          monomial of $f_w$:
          $$ f_w := f_w^+ - f_w^-
          $$
          with
          $$ f_w^+ := p_{v_1,v_2}(1,1) p_{v_3,v_4}(1,1) \dots  p_{v_{2k-1},v_{1}}(1,1)
          $$
          and
          $$ f_w^- := p_{v_{2},v_{3}}(1,1)p_{v_{4},v_{5}}(1,1) \dots p_{v_{2k-2},v_{2k-1}}(1,1),
          $$
          where for compactness of notation we have let $p_{ij}(1,1):=p_{ji}(1,1)$ if $i>j$.
    \end{enumerate}
    We say that the even closed walk $w$ is \emph{primitive} if the corresponding binomial is primitive.
\end{defn}
Let us state the characterization of such walks from \cite{OH}:
\begin{lm}[\cite{OH}, Lemma 3.2.]
    A primitive even closed walk on a graph $G$ is one of the following:
    \begin{itemize}
        \item an even cycle of $G$;
        \item two odd cycles having exactly one common vertex;
        \item two odd cycles having no common vertex together with two more walks, both of which connect a vertex of the first cycle with
                a vertex of the second cycle.
    \end{itemize}
\end{lm}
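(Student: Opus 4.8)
The plan is to prove the two directions separately: first, that each of the three listed configurations is indeed a primitive even closed walk, and second (the substantive half), that there are no others. For the first direction I would, in each case, check two things. \emph{Evenness and closedness}: an even cycle is even by definition; a union of two odd cycles through a common vertex has length $(2p+1)+(2q+1)$, which is even; and in the third configuration the two connecting walks must have lengths of the same parity, so the total length is again even. \emph{Primitivity}: using the incidence-matrix description of $I_G$, a binomial $f_w$ fails to be primitive precisely when some \emph{proper} sub-multiset of the odd-position edges of $w$ and some sub-multiset of its even-position edges have the same vertex-degree vector. For an even cycle this is ruled out by a propagation argument around the cycle (indeed $f_w$ is then a circuit). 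For the bow-tie and the dumbbell one runs the same propagation starting from a cycle edge: since every vertex other than the ``junction'' vertices is met exactly twice by the walk, a balanced proper sub-collection is forced, one vertex at a time, to exhaust an entire side --- a contradiction.

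For the second direction, let $w$ be a primitive even closed walk. I would first record the constraints forced by primitivity: (a) if a cyclically contiguous segment of $w$ is itself a closed walk of even length and strictly shorter than $w$, then its odd- and even-position edges form a nontrivial proper balanced pair, contradicting primitivity, so $w$ has no proper even closed sub-walk; (b) by similar excision arguments each edge is traversed at most twice by $w$. I would then pass to the connected multigraph underlying $w$, in which every vertex has even degree, and take an edge-disjoint decomposition of it into cycles. By (a), if one of these cycles is even it must exhaust $w$, which is case (1). Otherwise every cycle in the decomposition is odd; their total length being even, there is an even number of them, and I would argue that number is exactly $2$ --- three or more odd cycles can always be regrouped, two at a time (merged along a common vertex, or along a sub-walk of $w$ joining them), into a strictly shorter even closed walk, again contradicting (a). With exactly two odd cycles $C_1,C_2$ I would then analyze how $w$ glues them, keeping $w$ connected: if $C_1$ and $C_2$ meet, they must meet in exactly one vertex (two common vertices give a proper even closed sub-walk), which is case (2); if they are disjoint, the edges of $w$ lying outside $C_1\cup C_2$ assemble into walks joining $C_1$ to $C_2$, and exactly two such connecting walks occur (fewer leaves $w$ disconnected; more, or wrong parities, produce a forbidden even closed sub-walk), which is case (3).

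The hard part will be the bookkeeping in this last step: showing rigorously that the alternating $+/-$ labeling of positions, interacting with the odd lengths of $C_1$, $C_2$ and of the connecting walks, always lets one convert ``can be regrouped into a shorter even closed walk'' into an honest proper sub-binomial witnessing non-primitivity, and handling the various ways the pieces can share vertices. This is precisely \cite[Lemma~3.2]{OH}; in the paper we simply invoke it, and the outline above indicates the route to a self-contained proof.
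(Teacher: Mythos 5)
The paper offers no proof of this lemma: it is quoted directly from Ohsugi and Hibi \cite{OH} (their Lemma 3.2) and used as a black box, so there is no internal argument to compare yours against; what follows judges your outline on its own terms. The substantive direction (primitive $\Rightarrow$ one of the three forms) follows the standard route of the cited source --- excision of proper even closed sub-walks, control of edge multiplicities, and a cycle decomposition of the even-degree multigraph underlying $w$ --- and is broadly viable. One step does not follow as written, however: ``if one of these cycles is even it must exhaust $w$'' is not a consequence of your observation (a), because a cycle occurring in an edge-disjoint decomposition need not be a cyclically contiguous segment of $w$, so its edges need not inherit a consistent alternating $+/-$ labelling from the positions of $w$. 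You need a separate argument that an even cycle contained in the support of $w$ yields a binomial $g$ with $g^+\mid f_w^+$ and $g^-\mid f_w^-$; this is exactly the bookkeeping you defer, and it is the crux, not a detail.

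The more serious problem is your first half. The lemma asserts only the implication ``primitive $\Rightarrow$ one of the three configurations''; it does not claim the converse, and the converse is false for the third configuration. For example, take vertex-disjoint triangles $C_1=(1,2,3)$ and $C_2=(4,5,6)$ with connecting walks $P_1=(1,4)$ and $P_2=(1,7,5,4)$: this is literally ``two odd cycles having no common vertex together with two walks connecting a vertex of the first to a vertex of the second,'' and the total length is even, yet the edge $\{4,5\}$ is traversed once in an odd position (inside $C_2$) and once in an even position (inside $P_2$), so the variable $e_{45}$ divides both terms of $f_w$ and $f_w$ is not primitive. The full characterization of which walks of this shape are primitive requires extra conditions on the blocks and cut vertices of $w$ (this is later work of Reyes, Tatakis and Thoma, not \cite{OH}). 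So your proposed ``propagation argument'' for the dumbbell configuration cannot succeed in the generality in which you state it; you should drop the converse direction entirely, as it is not part of the statement being proved.
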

In Section \ref{subsec:common-submatrix-ideal}, we have seen only even cycles. This is because the graph in question, namely $G_n$, was bipartite, and therefore had no odd cycles.
We are now ready to state the main result of this section.
\begin{theorem}  [{Gr\"obner basis of the simplified model $\tilde\E_n$}]
\label{thm:Gbasis-of-A(edgeRho)-using-graphs}
    The ideal $I_{\tilde\E_n}$ admits a Gr\"obner basis consisting of the binomials arising from the cycles of the bipartite graph $G_n$
    together with the binomials arising from the primitive closed even walks of $K_n$, the complete graph on $n$ vertices.
\end{theorem}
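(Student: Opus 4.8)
The plan is to exploit the decomposition $I_{\tilde\E_n} = I_{\A_n} + Q$ proved in Theorem~\ref{thm:simplified-EdgeRho-decomposition}, produce a Gr\"obner basis for each of the two summands separately, and then glue them together. This mirrors the strategy used for Theorem~\ref{thm:Gbasis-of-A(noRho)-using-graphs}, but is in fact cleaner here because the two summands live in complementary sets of variables.

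First I would recall from Theorem~\ref{prop:G-basis-of-common-submatrix} that the set $\G_n$ of binomials arising from the cycles of the bipartite graph $G_n$ is a Gr\"obner basis of $I_{\A_n}$ with respect to \emph{every} term order (indeed, it is the universal Gr\"obner basis). Next I would identify a Gr\"obner basis of $Q$. As noted inside the proof of Theorem~\ref{thm:simplified-EdgeRho-decomposition}, under the identification $p_{ij}(1,1) \leftrightarrow$ the edge $\{i,j\}$ the ideal $Q$ is exactly the toric ideal of the edge ring $k[K_n]$ of the complete graph. Applying the Ohsugi--Hibi description of the Graver basis of the toric ideal of a graph (\cite{OH}, together with the characterization of primitive even closed walks in their Lemma~3.2) to $G = K_n$, one obtains that the set $\mathcal W_n$ of binomials arising from the primitive even closed walks of $K_n$, in the sense of Definition~\ref{defn:binomials-from-walks}, is the Graver basis of $Q$, hence its universal Gr\"obner basis, and in particular a Gr\"obner basis of $Q$ with respect to any term order.

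Now fix any term order $\prec$ on the polynomial ring of the simplified model. The binomials in $\G_n$ involve only the single-edge variables $p_{ij}(1,0)$ (equivalently $p_{ij}(0,1)$), whereas the binomials in $\mathcal W_n$ involve only the variables $p_{ij}(1,1)$, and these two families of variables are disjoint. Therefore the leading monomial of any element of $\G_n$ is coprime to the leading monomial of any element of $\mathcal W_n$, so every $S$-polynomial formed from one element of each set reduces to zero by Buchberger's first criterion. The $S$-polynomials among elements of $\G_n$ reduce to zero because $\G_n$ is a Gr\"obner basis of $I_{\A_n}$, and likewise for $\mathcal W_n$ and $Q$ (and reductions of $(1,1)$-only, resp. single-edge-only, $S$-polynomials cannot involve binomials from the other family, again by disjointness of variables). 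By Buchberger's criterion, $\G_n \cup \mathcal W_n$ is a Gr\"obner basis of the ideal it generates; by Theorem~\ref{thm:simplified-EdgeRho-decomposition} that ideal is $I_{\A_n} + Q = I_{\tilde\E_n}$. Since $\prec$ was arbitrary, this even shows $\G_n \cup \mathcal W_n$ is a universal Gr\"obner basis of $I_{\tilde\E_n}$.

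I expect the only delicate point to be the second step: correctly transcribing the Ohsugi--Hibi result so that the binomials it produces for $K_n$ match, on the nose, the "binomials arising from the primitive closed even walks" of Definition~\ref{defn:binomials-from-walks} under the edge-to-variable dictionary $p_{ij}(1,1) \leftrightarrow \{i,j\}$, and making sure the hypothesis $n \geq 4$ from Theorem~\ref{thm:simplified-EdgeRho-decomposition} is carried along. Everything else --- the disjoint-variable observation and the bookkeeping of $S$-pairs --- is routine, and in particular no elimination-order refinement (unlike in Theorem~\ref{thm:Gbasis-of-A(noRho)-using-graphs}) is needed, precisely because $Q$ is built from the $p_{ij}(1,1)$ alone.
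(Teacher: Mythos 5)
Your proposal is correct and follows essentially the same route as the paper: invoke the decomposition $I_{\tilde\E_n} = I_{\A_n} + Q$, use Theorem~\ref{prop:G-basis-of-common-submatrix} for $I_{\A_n}$, identify $Q$ with the toric ideal of $k[K_n]$ and apply the Ohsugi--Hibi description of its Graver basis, and conclude via the disjointness of the two variable sets. Your write-up merely makes explicit the Buchberger $S$-pair bookkeeping that the paper leaves implicit in the phrase ``since the ideals are in disjoint sets of variables.''
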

\begin{proof}
    Recall that $\G_n$ are precisely the binomials arising from the cycles of $G_n$.
    We have seen in Theorem \ref{thm:Gbasis-of-A(noRho)-using-graphs} that $\G_n$ form a Gr\"obner basis for $I_{\A_n}$ (in fact, they are
    a Graver basis).
    Since the ideals $I_{\A_n}$ and $Q$ are in disjoint sets of variables, it remains to find a Gr\"obner basis for $Q$.

    We do this by generalizing the argument in the proof of Theorem \ref{thm:Gbasis-of-A(noRho)-using-graphs}.
    Namely, from the proof of Theorem \ref{thm:simplified-EdgeRho-decomposition} we know that $Q$ is the toric ideal of the edge ring
    $k[K_n]$.  This allows us to use \cite{OH} and \cite{Vill} again, and we obtain (e.g. Lemmas 3.1. and 3.2. in \cite{OH}) that
    the Graver basis of $Q$ consists of the binomials \emph{arising from the primitive even closed walks on $K_n$}. 
    In addition, note that Theorem 9.2.1 of \cite{Vill} provides a quadratic Gr\"obner basis as well.
\end{proof}

A proof of Conjecture \ref{thm:minimal-Markov-simplified-noRho} would imply the following: 
%\marginpar{NOT crucial again but alg. nice.}
%\marginpar{remove next thm if removing \ref{thm:minimal-Markov-simplified-noRho}}
\begin{conj} [Minimal Markov basis of the simplified model $\tilde\E_n$]
	For $n\geq 4$, the ideal $I_{\tilde\E_n}$ is minimally generated by homogeneous binomials of degrees $2$ and $3$.
	
	More precisely, the degree $2$ and $3$ binomials in $\G_n$, together with the quadratic generators of $Q$, 
	form a Markov basis for the model.
\end{conj}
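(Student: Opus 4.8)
The plan is to assume Conjecture~\ref{thm:minimal-Markov-simplified-noRho} and read off the minimal generators of $I_{\tilde\E_n}$ from the decomposition $I_{\tilde\E_n}=I_{\A_n}+Q$ of Theorem~\ref{thm:simplified-EdgeRho-decomposition}. The key structural input, already used in the proof of Theorem~\ref{thm:Gbasis-of-A(edgeRho)-using-graphs}, is that $I_{\A_n}$ and $Q$ involve disjoint sets of variables: $I_{\A_n}\subseteq k[\,p_{ij}(1,0),p_{ij}(0,1)\,]$ while $Q\subseteq k[\,p_{ij}(1,1)\,]$; moreover both are homogeneous for the standard grading (for $I_{\A_n}$ this is the constant-column-sum remark of Section~\ref{subsec:common-submatrix-ideal}, and $Q$ is generated by quadrics), so $I_{\tilde\E_n}$ is a homogeneous ideal with a well-defined minimal number of generators in each degree. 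First I would record the elementary lemma that, for homogeneous ideals $I_1\subseteq k[\mathbf x]$ and $I_2\subseteq k[\mathbf y]$ with $\mathbf x$ and $\mathbf y$ disjoint, a minimal homogeneous generating set of $I_1+I_2$ in $k[\mathbf x,\mathbf y]$ is the disjoint union of minimal homogeneous generating sets of $I_1$ and of $I_2$; this follows from $k[\mathbf x,\mathbf y]/(I_1+I_2)\cong k[\mathbf x]/I_1\otimes_k k[\mathbf y]/I_2$, which yields $(I_1+I_2)/\mathfrak m(I_1+I_2)\cong I_1/\mathfrak m_1 I_1\,\oplus\,I_2/\mathfrak m_2 I_2$, together with graded Nakayama. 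This reduces everything to describing minimal generators of $Q$ and of $I_{\A_n}$ separately.

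For $Q$ there is nothing left to prove: the proof of Theorem~\ref{thm:simplified-EdgeRho-decomposition} identifies $Q$ with the toric ideal of the edge ring $k[K_n]$, whose minimal generating set for $n\ge 4$ is exactly the set of quadratic binomials $p_{ij}(1,1)p_{kl}(1,1)-p_{ik}(1,1)p_{jl}(1,1)$ (the $2\times 2$ minors of the tableau appearing there; see \cite{OH}, \cite{Vill}, \cite{CoNa}). Thus the $Q$-part of a minimal generating set of $I_{\tilde\E_n}$ is precisely the family of quadratic generators of $Q$ named in the statement.

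For $I_{\A_n}$ I would transport Conjecture~\ref{thm:minimal-Markov-simplified-noRho} along the decomposition $I_{\tilde\Z_n}=I_{\A_n}+T$ of Theorem~\ref{thm:simplified-noRho-decomposition}, where $T=\big(\,p_{ij}(1,0)p_{ij}(0,1)-p_{ij}(1,1):i<j\,\big)$. Since each variable $p_{ij}(1,1)$ occurs in $I_{\tilde\Z_n}$ only in the single generator of $T$ indexed by $\{i,j\}$, the substitution $p_{ij}(1,1)\mapsto p_{ij}(1,0)p_{ij}(0,1)$ defines a ring isomorphism from $k[\,\tilde\Z_n\text{-variables}\,]/T$ onto $k[\,p_{ij}(1,0),p_{ij}(0,1)\,]$ carrying $I_{\tilde\Z_n}/T$ onto $I_{\A_n}$. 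Tracking minimal generators through this isomorphism — and using that each generator of $T$ sits in a minimal multidegree at which the multigraded piece of $I_{\tilde\Z_n}$ is one-dimensional, and so must occur in every minimal generating set — gives that a minimal generating set of $I_{\tilde\Z_n}$ is the union of $\G_T$ (the generators of $T$) and a minimal generating set of $I_{\A_n}$. Hence Conjecture~\ref{thm:minimal-Markov-simplified-noRho}, which predicts the minimal generators of $I_{\tilde\Z_n}$ to be $\G_T$ together with the homogeneous quadrics $p_{ik}(1,0)p_{jl}(1,0)-p_{il}(1,0)p_{jk}(1,0)$ and the triangle cubics $p_{ij}(0,1)p_{ik}(1,0)p_{jk}(0,1)-p_{ij}(1,0)p_{ik}(0,1)p_{jk}(1,0)$, forces the minimal generators of $I_{\A_n}$ to be exactly those quadrics and cubics — all of degree $2$ or $3$ and all lying in $\G_n$ (the quadrics come from the $4$-cycles of $G_n$, the cubics from the $6$-cycles supported on three node labels). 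Since $\G_n$ is a Markov basis of $I_{\A_n}$ (Theorem~\ref{prop:G-basis-of-common-submatrix}), its degree-$2$ and degree-$3$ members still generate $I_{\A_n}$, and together with the quadratic generators of $Q$ they generate $I_{\tilde\E_n}=I_{\A_n}+Q$ and hence form a Markov basis; combining with the two previous paragraphs, $I_{\tilde\E_n}$ is minimally generated in degrees $2$ and $3$, as claimed.

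The genuine obstacle is Conjecture~\ref{thm:minimal-Markov-simplified-noRho} itself — that the toric ideal of the incomplete bipartite graph $G_n$ has no minimal generator of degree exceeding $3$ — which is the hard open input being assumed; a direct attack would require showing that every binomial arising from a cycle of $G_n$ of length $\ge 8$, and every $6$-cycle binomial supported on more than three labels, reduces modulo the quadrics and triangle cubics. Within the implication above, the only delicate points are the multidegree bookkeeping needed to split a minimal generating set of $I_{\tilde\Z_n}$ across the \emph{non}-disjoint decomposition $I_{\A_n}+T$ (so the disjoint-variable lemma cannot be applied directly and one must argue through the substitution isomorphism instead), and the routine verification that the disjoint-variable lemma is valid in whatever grading makes $I_{\tilde\E_n}$ homogeneous.
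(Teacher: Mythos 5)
The statement you are proving is presented in the paper as a \emph{conjecture}, with no proof supplied: the authors only remark that ``a proof of Conjecture \ref{thm:minimal-Markov-simplified-noRho} would imply'' it. Your proposal is precisely a fleshing-out of that one-line implication, and it follows the route the paper clearly intends: use the decomposition $I_{\tilde\E_n}=I_{\A_n}+Q$ of Theorem \ref{thm:simplified-EdgeRho-decomposition}, handle $Q$ via the identification with the toric ideal of $k[K_n]$ (quadratically generated), and extract the degree bound for $I_{\A_n}$ from Conjecture \ref{thm:minimal-Markov-simplified-noRho} via the decomposition $I_{\tilde\Z_n}=I_{\A_n}+T$ of Theorem \ref{thm:simplified-noRho-decomposition}. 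The two supporting lemmas you supply --- the tensor-product/graded-Nakayama argument for ideals in disjoint variables, and the substitution $p_{ij}(1,1)\mapsto p_{ij}(1,0)p_{ij}(0,1)$ that kills $T$ and carries $I_{\tilde\Z_n}$ onto $I_{\A_n}$ --- are both sound, and the substitution argument in particular cleanly shows that the listed quadrics and cubics generate $I_{\A_n}$ whenever Conjecture \ref{thm:minimal-Markov-simplified-noRho} holds. The one thing to be explicit about is that this is a \emph{conditional} proof and cannot be more than that with the tools in the paper: the genuine mathematical content --- that the toric ideal of the edge subring of $G_n=K_{n,n}\setminus\{(i,i)\}$ is generated in degrees $2$ and $3$ --- is exactly the open input you correctly identify, so the statement remains a conjecture after your argument, exactly as the paper leaves it.
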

%\begin{proof}
%	By Theorem \ref{thm:simplified-EdgeRho-decomposition}, we need to find a minimal generating set of $I_{\A_n}$ and of $Q$.  
%	Theorem \ref{thm:minimal-Markov-simplified-noRho} gives a minimal generating set consisting of quadrics and cubics for the common
%	submatrix ideal $I_{\A_n}$.  On the other hand, Theorem 9.2.1. of \cite{Vill} provides a quadratic Gr\"obner basis of $Q$.
%\end{proof}

\smallskip
%%%%%%%%%%%%%%%%%%%%%%%%%%%%%%%%%%%%%%%%%%%%%%%%%%%%%%%%%%%%%%%%%%%%%%%%%%%%%%%%%%%%

\subsection{From the edge subring of a graph back to the  $p_1$  model}
\label{section:back-to-lambdas}

We are now ready to introduce the $\lambda_{ij}$ back into the parametrization and consider the original design matrices $\Z_n$, $\C_n$ and $\E_n$.  
Recall that they can be obtained from $\tilde\Z_n$, $\tilde\C_n$ and $\tilde\E_n$, respectively, by adding 
 ${n\choose 2}$  columns representing the  variables $p_{i,j}(0,0)$ for all $i<j$
 and ${n\choose 2}$ rows indexed by $\lambda_{ij}$.
 %, consisting of four $1$ entries
%in the columns indexed by $p_{i,j}(0,0)$, $p_{i,j}(1,0)$, $p_{i,j}(0,1)$, $p_{i,j}(1,1)$, and zeroes everywhere else.
%For purposes of clarity, let us denote the design matrix for the model including the new rows and columns 
%by $M_n$.
\begin{eg}\rm
    Let $n=4$. Then, for the case in which $\rho_{i,j} = 0$, the $15 \times 24$ design matrix $\Z_4$ is:
\[
\left[
\begin{array}{cccccccccccccccccccccccc}
1 & 1 & 1 & 1 & 0 & 0 & 0 & 0 & 0 & 0 & 0 & 0 & 0 & 0 & 0 & 0 & 0 & 0 & 0 & 0 & 0 & 0 & 0 & 0\\
0 & 0 & 0 & 0 & 1 & 1 & 1 & 1 & 0 & 0 & 0 & 0 & 0 & 0 & 0 & 0 & 0 & 0 & 0 & 0 & 0 & 0 & 0 & 0\\
0 & 0 & 0 & 0 & 0 & 0 & 0 & 0 & 1 & 1 & 1 & 1 & 0 & 0 & 0 & 0 & 0 & 0 & 0 & 0 & 0 & 0 & 0 & 0\\
0 & 0 & 0 & 0 & 0 & 0 & 0 & 0 & 0 & 0 & 0 & 0 & 1 & 1 & 1 & 1 & 0 & 0 & 0 & 0 & 0 & 0 & 0 & 0\\
0 & 0 & 0 & 0 & 0 & 0 & 0 & 0 & 0 & 0 & 0 & 0 & 0 & 0 & 0 & 0 & 1 & 1 & 1 & 1 & 0 & 0 & 0 & 0\\
0 & 0 & 0 & 0 & 0 & 0 & 0 & 0 & 0 & 0 & 0 & 0 & 0 & 0 & 0 & 0 & 0 & 0 & 0 & 0 & 1 & 1 & 1 & 1\\
0 & 1 & 1 & 2 & 0 & 1 & 1 & 2 & 0 & 1 & 1 & 2 & 0 & 1 & 1 & 2 & 0 & 1 & 1 & 2 & 0 & 1 & 1 & 2\\
0 & 1 & 0 & 1 & 0 & 1 & 0 & 1 & 0 & 1 & 0 & 1 & 0 & 0 & 0 & 0 & 0 & 0 & 0 & 0 & 0 & 0 & 0 & 0\\
0 & 0 & 1 & 1 & 0 & 0 & 0 & 0 & 0 & 0 & 0 & 0 & 0 & 1 & 0 & 1 & 0 & 1 & 0 & 1 & 0 & 0 & 0 & 0\\
0 & 0 & 0 & 0 & 0 & 0 & 1 & 1 & 0 & 0 & 0 & 0 & 0 & 0 & 1 & 1 & 0 & 0 & 0 & 0 & 0 & 1 & 0 & 1\\
0 & 0 & 0 & 0 & 0 & 0 & 0 & 0 & 0 & 0 & 1 & 1 & 0 & 0 & 0 & 0 & 0 & 0 & 1 & 1 & 0 & 0 & 1 & 1\\
0 & 0 & 1 & 1 & 0 & 0 & 1 & 1 & 0 & 0 & 1 & 1 & 0 & 0 & 0 & 0 & 0 & 0 & 0 & 0 & 0 & 0 & 0 & 0\\
0 & 1 & 0 & 1 & 0 & 0 & 0 & 0 & 0 & 0 & 0 & 0 & 0 & 0 & 1 & 1 & 0 & 0 & 1 & 1 & 0 & 0 & 0 & 0\\
0 & 0 & 0 & 0 & 0 & 1 & 0 & 1 & 0 & 0 & 0 & 0 & 0 & 1 & 0 & 1 & 0 & 0 & 0 & 0 & 0 & 0 & 1 & 1\\
0 & 0 & 0 & 0 & 0 & 0 & 0 & 0 & 0 & 1 & 0 & 1 & 0 & 0 & 0 & 0 & 0 & 1 & 0 & 1 & 0 & 1 & 0 & 1\\
\end{array}
\right]
\]
\end{eg}

The following analysis applies to both the case of constant $\rho_{ij}$ and the case of edge-dependent $\rho_{ij}$,
thus we will treat them simultaneously.

There is a very concise way to describe the new toric ideal in terms of the cases when $\lambda_{ij}$ were ignored.
For a binomial $f$, we will say that $f$ is \emph{multi-homogeneous with respect to each pair $\{i,j\}$} if
the degrees in the variables indexed by the pair $\{i,j\}$ agree for the two monomials of $f$. More precisely,
if $f=f^+ - f^-$, then we require that
\begin{align*} \deg_{p_{i,j}(0,0)}(f^+) &+ \deg_{p_{i,j}(1,0)}(f^+) +\deg_{p_{i,j}(0,1)}(f^+) +\deg_{p_{i,j}(1,1)}(f^+) \\
  =
   \deg_{p_{i,j}(0,0)}(f^-) &+\deg_{p_{i,j}(1,0)}(f^-)+\deg_{p_{i,j}(0,1)}(f^-)+\deg_{p_{i,j}(1,1)}(f^-)
\end{align*}
for each pair $\{i,j\}$.
This allows us to make a simple observation.
\begin{prop}  [Geometry of the $p_1$ model]
	\label{thm:geometry-of-p1:multihomog}
    The toric ideals $I_{\Z_n}$, $I_{\C_n}$ and $I_{\E_n}$  of the $p_1$ model on an $n$-node network 
    are precisely those parts of the ideals $I_{\tilde\Z_n}$, $I_{\tilde\C_n}$ and $I_{\tilde\E_n}$, respectively, 
    which are \emph{multi-homogeneous} with respect to \emph{each} pair $\{i,j\}$. \\
	Therefore, the toric variety for the $p_1$ models is obtained by slicing the  corresponding varieties 
	%from	Section \ref{section:simplified-model-and-graphs}
	of the simplified model by ${n\choose 2}$ hyperplanes.
\end{prop}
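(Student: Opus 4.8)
The plan is to reduce the statement to a direct comparison of the integer kernels of the two design matrices and then invoke the standard dictionary relating a toric ideal to its (de)homogenization with respect to a distinguished set of grading variables. Write $R$ for the polynomial ring in the $4{n\choose 2}$ variables $p_{ij}(a,b)$ and $\tilde R$ for the subring in the $3{n\choose 2}$ variables with $(a,b)\neq(0,0)$, and let $\delta\colon R\to\tilde R$ be the ring map sending every $p_{ij}(0,0)\mapsto 1$ and fixing the other variables. Grade $R$ by $\mathbb Z^{{n\choose 2}}$ with $p_{ij}(a,b)$ of multidegree $e_{\{i,j\}}$ for \emph{all four} values of $(a,b)$; then ``multi-homogeneous with respect to each pair'' means multi-homogeneity for this grading.

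First I would record, uniformly in the three cases, that $\Z_n$ (resp.\ $\C_n$, $\E_n$) is obtained from $\tilde\Z_n$ (resp.\ $\tilde\C_n$, $\tilde\E_n$) by adjoining ${n\choose 2}$ columns, one per $p_{ij}(0,0)$, and ${n\choose 2}$ rows, one per $\lambda_{ij}$, where the $\lambda_{ij}$-row is the indicator of the four columns of the dyad $\{i,j\}$ and the $p_{ij}(0,0)$-column is the indicator of the single row $\lambda_{ij}$. Reading the equations $\Z_n u=0$ off this block form gives, for $u\in\mathbb Z^{4{n\choose 2}}$ with non-$(0,0)$ part $\tilde u$: $u\in\ker\Z_n$ iff (i) $\sum_{a,b}u_{ij}(a,b)=0$ for every dyad $\{i,j\}$, and (ii) $\tilde u\in\ker\tilde\Z_n$ (the remaining rows involve only the non-$(0,0)$ coordinates and agree there with $\tilde\Z_n$). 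Hence $u\mapsto\tilde u$ is a lattice isomorphism $\ker\Z_n\xrightarrow{\sim}\ker\tilde\Z_n$, the inverse reconstructing $u_{ij}(0,0)=-\sum_{(a,b)\neq(0,0)}\tilde u_{ij}(a,b)$. The identical three sentences hold for $\C_n,\tilde\C_n$ and for $\E_n,\tilde\E_n$, since the adjoined $\lambda$-block is the same in all three cases; this is exactly what makes the proposition uniform.

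Next I would translate this to ideals. For a binomial $p^{u^+}-p^{u^-}$ (with $u^\pm$ the positive and negative parts of $u$), condition (i) says precisely that the two terms have equal multidegree; so $I_{\Z_n}=\langle p^{u^+}-p^{u^-}:u\in\ker\Z_n\rangle$ is multi-homogeneous, and applying $\delta$ sends each such generator to $\tilde p^{\tilde u^+}-\tilde p^{\tilde u^-}\in I_{\tilde\Z_n}$, giving $\delta(I_{\Z_n})\subseteq I_{\tilde\Z_n}$; by the lattice isomorphism every generator of $I_{\tilde\Z_n}$ is obtained this way, so $\delta(I_{\Z_n})=I_{\tilde\Z_n}$. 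Conversely, multi-homogenizing a generator of $I_{\tilde\Z_n}$ — multiplying its two terms by the appropriate monomials in the $p_{ij}(0,0)$ — returns exactly the matching generator of $I_{\Z_n}$, because condition (i) says the lift is already multi-homogeneous; thus $I_{\Z_n}$ is the multi-homogenization of $I_{\tilde\Z_n}$. Together these say that $I_{\Z_n}$ is precisely the set of multi-homogeneous binomials whose $\delta$-image lies in $I_{\tilde\Z_n}$ — ``the multi-homogeneous part of $I_{\tilde\Z_n}$'' in the sense of the statement — and the same for $I_{\C_n}$ and $I_{\E_n}$. Feeding this into Theorems~\ref{thm:simplified-noRho-decomposition} and~\ref{thm:simplified-EdgeRho-decomposition} then transfers the explicit generators and bases found there to the $p_1$ models.

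For the variety claim, $\delta$ is restriction to the affine chart $\{p_{ij}(0,0)=1:i<j\}$, a codimension-${n\choose 2}$ linear slice; the identity $\delta(I_{\Z_n})=I_{\tilde\Z_n}$ says this slice of $V(I_{\Z_n})$ is isomorphic to $V(I_{\tilde\Z_n})$, and $I_{\Z_n}$ being the multi-homogenization says that, conversely, $V(I_{\Z_n})$ is the multi-graded cone over $V(I_{\tilde\Z_n})$; the rank identity $\rank\Z_n=\rank\tilde\Z_n+{n\choose 2}$ checks the dimensions. I do not expect a real obstacle here — the whole content sits in the kernel comparison of step two. The only point I would be careful with in the write-up is the homogenization/dehomogenization bookkeeping: that no binomial of $I_{\Z_n}$ is lost on the round trip (equivalently, that $I_{\Z_n}$ is saturated with respect to $\prod_{i<j}p_{ij}(0,0)$, which is automatic since $I_{\Z_n}$ is a prime toric ideal containing no variable), and pinning down the precise ambient space so that ``slicing by ${n\choose 2}$ hyperplanes'' is literally accurate.
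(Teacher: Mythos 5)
Your proposal is correct and follows essentially the same route as the paper's proof: both rest on the observation that the adjoined $\lambda_{ij}$ rows force every kernel vector (hence every binomial) to be balanced on each dyad, which is exactly the multi-homogeneity condition. Your write-up is more careful than the paper's two-paragraph sketch — in particular the explicit kernel bijection $u\mapsto\tilde u$, the dehomogenization map $\delta$, and the saturation remark make precise the role of the $p_{ij}(0,0)$ columns, which the paper's argument leaves implicit — but no new idea is needed.
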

\begin{proof}
    The rows indexed by $\lambda_{ij}$ that are added to the matrices $I_{\tilde\Z_n}$, $I_{\tilde\C_n}$ and $I_{\tilde\E_n}$ 
    require precisely that the binomials in the ideal are multi-homogeneous according to the criterion given above.
    For example, consider the first row indexed by $\lambda_{1,2}$. For any binomial $f=p^{u^+}-p^{u^-}$ in the ideal, 
    the exponent vector $u^+ -u^-$ being in the kernel of the matrix means that the number of variables $p_{1,2}(\bullet,\bullet)$
    which appear in $p^{u^+}$ equals the number of variables $p_{1,2}(\bullet,\bullet)$ that appear in $p^{u^-}$.  

	For the hyperplane section statement, note that the varieties defined by 
	$I_{\tilde\Z_n}$, $I_{\tilde\C_n}$ and $I_{\tilde\E_n}$
	 live in a smaller-dimensional space than those defined by
	 $I_{\Z_n}$, $I_{\C_n}$, and $I_{\E_n}$, respectively; 
	 but we may embed them in the natural way into a higher-dimensional space.  The hyperplanes are defined by 
	 the rows indexed by the $\lambda_{ij}$'s.
\end{proof}

In general, multi-homogeneous part of any given ideal is \emph{not} generated by the multi-homogeneous generators of the 
original ideal.  
But for the ideal of the $p_1$ model we are able to use homogeneity to decompose the Markov moves into ''essential'' parts.
The upshot of this result is that analysis and computations become easier. In addition, all moves obtained this way are applicable and irredundant. 

\begin{theorem} [Essential Markov moves for the $p_1$ model] 
	\label{thm:essentialMarkovForModel}
	The Markov moves for the $p_1$ model in the case of zero and edge-dependent reciprocation 
	can be obtained from the Graver basis of the common submatrix $\A_n$, together with 
	the Markov basis for the ideal $Q$ as defined in Theorem \ref{thm:simplified-EdgeRho-decomposition}.
		
\end{theorem}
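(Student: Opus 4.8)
The plan is to reduce the statement about the multi-homogeneous ideals $I_{\Z_n}$ and $I_{\E_n}$ to the structural decompositions already established for the simplified models, and then to show that multi-homogenization of a Markov move can always be realized by overlapping ``essential'' moves coming from $\G_n$ (the Graver basis of $I_{\A_n}$) and from a generating set of $Q$. First I would invoke Proposition \ref{thm:geometry-of-p1:multihomog}: every binomial in $I_{\Z_n}$ (respectively $I_{\E_n}$) is, after forgetting the $p_{ij}(0,0)$ variables, a multi-homogeneous binomial in $I_{\tilde\Z_n}$ (respectively $I_{\tilde\E_n}$). By Theorems \ref{thm:simplified-noRho-decomposition} and \ref{thm:simplified-EdgeRho-decomposition} these ideals decompose as $I_{\A_n}+T$ and $I_{\A_n}+Q$ with $I_{\A_n}$ and the extra piece ($T$ or $Q$) living in disjoint sets of variables. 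So any element of the simplified ideal is a combination of binomials from $\G_n$ and from the generators of $T$ (resp.\ $Q$).

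Next I would handle the two cases. For edge-dependent reciprocation the key point is that $T$ does not appear: the relevant extra ideal is $Q$, which is exactly the ideal called for in the theorem, so a move in $I_{\E_n}$ is already a signed sum of $\G_n$-moves and $Q$-moves, and the only thing to check is that this sum can be arranged so that every partial sum stays in the sample space (one edge per dyad) once the appropriate $p_{ij}(0,0)$ decorations are reinstated — this is precisely the ``overlap'' mechanism illustrated in Examples \ref{eg:overlap} and \ref{eg:overlap-and-lift}. For zero reciprocation the extra ideal in the simplified model is $T$, generated by the inapplicable binomials $p_{ij}(1,0)p_{ij}(0,1)-p_{ij}(1,1)$; here I would argue that whenever such a generator is used in expressing a genuinely multi-homogeneous (hence applicable) move of $I_{\Z_n}$, its effect of converting a $(1,1)$-edge into a pair of single edges (or vice versa) must be cancelled by another use of $T$ or absorbed into a $\G_n$-move, since the multi-homogeneity constraint forbids the net creation or destruction of a bidirected edge on any dyad without a compensating change. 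Thus the $T$-contributions telescope and what survives is expressible through $\G_n$-moves alone, together with the trivial re-labelling $p_{ij}(1,1)\leftrightarrow p_{ij}(1,0)p_{ij}(0,1)$ which in network terms is the identity move. Reinserting the $\lambda_{ij}$ rows only forces the decorations by $p_{ij}(0,0)$ needed for multi-homogeneity, which as noted in the remark after the binomial-to-move dictionary have no effect on the network.

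The main obstacle I expect is the bookkeeping in the last step: showing that the decomposition of a multi-homogeneous binomial into $\G_n$- and $Q$-pieces can be \emph{ordered} so that every intermediate network is legal (at most one configuration per dyad) and so that the pieces themselves, after decoration, are applicable moves rather than just formal binomials. This is exactly where multi-homogeneity of the \emph{ideal} does not automatically give multi-homogeneity of a chosen generating expression, so I would need to multi-homogenize each $\G_n$- or $Q$-generator individually by attaching the minimal monomial in the $p_{ij}(0,0)$ (and, for the zero-$\rho$ case, possibly converting stray $p_{ij}(1,1)$ factors via $T$) and then verify that the signed telescoping sum of these decorated generators reproduces the original move while never leaving $\mathcal X_n$. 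I would carry this out by induction on the total degree of the move, peeling off one cycle or one quadric at a time and checking that the residual binomial is still multi-homogeneous and of smaller degree, so that the inductive hypothesis applies; the base cases are the quadrics of $\G_n$ and of $Q$ and the $3$-cycle cubic, all of which are manifestly applicable after decoration.
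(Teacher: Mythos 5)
Your proposal follows essentially the same route as the paper's proof: use the containment $I_{\E_n}\subset I_{\tilde\E_n}$ (and likewise for $\Z_n$), the decompositions of Theorems \ref{thm:simplified-noRho-decomposition} and \ref{thm:simplified-EdgeRho-decomposition}, and then peel a non-primitive binomial of the model ideal into an overlap of primitive binomials of the simplified ideal, which by Theorems \ref{prop:G-basis-of-common-submatrix} and \ref{thm:Gbasis-of-A(edgeRho)-using-graphs} come from $\G_n$ and the generators of $Q$. The one place you go beyond the paper is the zero-reciprocation case, where the paper simply asserts that replacing $\E_n$ by $\Z_n$ changes nothing, whereas you explicitly address how the inapplicable $T$-generators $p_{ij}(1,0)p_{ij}(0,1)-p_{ij}(1,1)$ must telescope away in any multi-homogeneous (hence applicable) move --- a worthwhile refinement of the same argument rather than a different one.
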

\begin{rmk}\rm
	The case of constant reciprocation is more challenging as we do not yet have a simple decomposition as in the other two cases. 
	However, a similar argument can be used to claim that the Markov moves in case of constant reciprocation can be obtained by 
	repeating the moves for the simplified model, while respecting the multi-homogeneity requirement.
\end{rmk}

Before embarking on a proof of \ref{thm:essentialMarkovForModel}, let us make the claim more precise. 
Take 
\[q=q^+-q^-\in I_{\tilde\Z_n} \mbox{ or } I_{\tilde\E_n}
\]
in the ideal of the simplified model.
The monomials $q^+$ and $q^-$ are represented by directed edges in our $n$-node network.
If $q^+$ and $q^-$ represent the same cycle with different orientation, then $q$ is multi-homogeneous already.
On the other hand, suppose
\[ q = \prod_{k=1}^{d} p_{i_k,j_k}(a_k,b_k) - \prod_{k=1}^{d} p_{s_k,t_k}(c_k,d_k)
\]
where $ p_{i_k,j_k}(a_k,b_k),p_{s_k,t_k}(c_k,d_k)\in \{p_{i,j}(1,0),p_{i,j}(0,1),p_{i,j}(1,1)\}$, $i<j$, for $1\leq k\leq d$.
Then we may define
\[ \tilde{q} := \prod_{k=1}^{d} p_{i_k,j_k}(a_k,b_k)\prod_{k=1}^{d} p_{s_k,t_k}(0,0) - \prod_{k=1}^{d} p_{s_k,t_k}(c_k,d_k)\prod_{k=1}^{d} p_{i_k,j_k}(0,0).
\]
Also, one can modifiy $\tilde{q}$ by taking $k$ in a subset of $\{1,\dots,d\}$, as we may not need every $k$ from $1$ to $d$ to make $q$ multi-homogeneous.
We call each such $\tilde{q}$ a {\bf lifting} of $q$, including the case when $q$ is lifted using less then $d$ variables in each term. \\
(Note: in addition, if not all of $(a_k,b_k)$ and $(c_k,d_k)$ are $(1,1)$, we may lift $q$ by $p_{i,j}(1,1)$
instead of $p_{i,j}(0,0)$ as we just described in $\tilde{q}$.)

It is clear that all of these lifts are in the toric ideal of the model. It is not clear that these are sufficient
to generate it (or give its Gr\"obner basis).  In particular, it seems that there is another kind of lifting that needs to
be included to generate the ideal of the model. Essentially, it involves overlapping two (or more!) minimal generators of the ideal $I_{\A_n}$ in a special way.

For example,  %Here is an example of this additional lifting that seems necessary to obtain the Markov bases for the statistical model. 
consider the binomial 
\begin{align*}
	 p_{1,2}(1,0)p_{1,3}(0,1)p_{1,4}(0,1)p_{2,3}(1,1)- p_{1,2}(0,1)p_{1,3}(1,1)p_{2,3}(0,1)p_{2,4}(0,1) ,
\end{align*}
which is in the ideal for the model on $n\geq 4$ nodes.  It is not of the form $\tilde{q}$, that is, it is not lifted in the nice way described above.  However, it can still
be obtained from binomials on the graph on $3$ nodes.  Let 
\begin{align*}
	f^+&=p_{1,2}(1,0)p_{1,3}(0,1)p_{2,3}(1,0) \\
	f^- &=p_{1,2}(0,1)p_{1,3}(1,0)p_{2,3}(0,1)
\end{align*}
and 
\begin{align*}
	g^+ &=p_{1,4}(0,1)p_{2,3}(0,1)\\
	g^-  &=p_{1,3}(0,1)p_{2,4}(0,1).
\end{align*}
Note that $f:= f^+-f^-$ and $g:= g^+-g^-$ are in the simplified model ideal for $n\geq 3$ nodes. 
If we use the fact that in the graph the edges from node $1$ to node $3$ and from node $3$ to node $1$ combine to a
double-ended edge  between $1$ and $3$ to rewrite the binomial
$$
	f\boxtimes g := f^+ g^+  -  f^- g^-
$$
then we obtain precisely 
\begin{align*}
	 p_{1,2}(1,0)p_{1,3}(0,1)p_{1,4}(0,1)p_{2,3}(1,1)- p_{1,2}(0,1)p_{1,3}(1,1)p_{2,3}(0,1)p_{2,4}(0,1) .
\end{align*}
We will call such an operation ($f\boxtimes g$) an {\bf overlap} of two binomials, since it corresponds to overlapping the edges of the graphical representations of $f^+-f^-$ and $g^+-g^-$. 
	Take a binomial $f\boxtimes g$ in the ideal of the model $I_{\E_n}$.  Note that it may happen that 
	  neither $f$ nor $g$ are in $I_{\E_n}$. But in terms of 
	\emph{moves}, $f\boxtimes g$ is equivalent to performing two successive moves: the one defined by $f$, and the one defined by $g$.	
	In particular, binomial overlaps give rise to consecutive Markov moves which respect multi-homogeneity.  

\begin{rmk}\rm
	Note that $Q$ appears only in the decomposition for the ideal $I_{\tilde\E_n}$, and not $I_{\tilde\Z_n}$. But for example the binomial
	\[
		p_{12}(1,1)p_{34}(1,1)p_{23}(0,0)p_{14}(0,0)  -  p_{12}(0,0)p_{34}(0,0)p_{23}(1,1)p_{14}(1,1)
	\]
	is a homogenization of a generator of $Q$, and it lives in the ideal $I_{\tilde\Z_4}$. Homogenization by $p_{ij}(0,0)$ does not affect the 
	\emph{move} itself.
\end{rmk}

\begin{proof}[Proof of Theorem \ref{thm:essentialMarkovForModel}]
	Clearly, all moves from $I_{\A_n}$ and $Q$ can be homogenized by lifting:
	that is, if $q\in I_{\A_n}$, then $\tilde{q}\in I_{\E_n}$.

	The proof relies on a simple, yet crucial, observation that by definition, the ideal of the model is contained in the ideal of the 
	simplified model; e.g. 
	\[ I_{\E_n} \subset I_{\tilde\E_n}.
	\]

	Let $f\in I_{\E_n}$.  Then $f\in I_{\tilde\E_n}$.  If $f$ is in the Graver basis of $I_{\tilde\E_n}$, then we are conclude by 
	  Theorem \ref{thm:Gbasis-of-A(edgeRho)-using-graphs}.
		Alternately, assume $f$ is not in the Graver basis of $I_{\tilde\E_n}$. Then there exists a binomial $p\in I_{\tilde\E_n}$ such that $p^+$ divides $f^+$
	and $p^-$ divides $f^-$. 
	Equivalently, we can write $f$ as $f=p\boxtimes \tilde f$ for $\tilde f$ defined appropriately (e.g. $\tilde f^+ := f^+ / p^+$).
	Since we may assume that $p$ is primitive, we can keep decomposing $\tilde f$ until we obtain an overlap of $k$ primitive binomials from 
	the ideal of the simplified model.  
	
	  Note also that we may assume that in the end we are not in the case where $\tilde f^+ = \tilde f^-$. Indeed,
	if $f$ is a multiple of another binomial in the ideal, say, $f=\tilde f^+ g^+ - \tilde f^+ g^-$, then $g^+-g^-$ is also in the ideal. 
	  In terms of moves, multiples do not contribute anything: they instruct us to remove and add the same edge.
	  
	Replacing $\E_n$ by $\Z_n$ does not change the above argument.
\end{proof}

%%%%%%%%%%%%%%%%%%%%%%%%%%%%%%
Using these two constructions of lifting and overlapping, we make the following conjecture:
\begin{conj}
    Minimal Markov (Gr\"obner) bases for the $p_1$ models can be obtained from Markov (Gr\"obner) bases of $I_{\A_n}$
    by repeated \emph{lifting} and \emph{overlapping} of the binomials in the minimal Markov bases of various $(n-1)$-node subnetworks.
\end{conj}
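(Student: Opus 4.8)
Since this is stated as a conjecture, what follows is an attack plan with the essential obstruction flagged rather than a finished argument. The natural point of departure is the overlap decomposition already produced in the proof of Theorem~\ref{thm:essentialMarkovForModel}: every $f\in I_{\E_n}$ (or $I_{\Z_n}$) can be written as an overlap $f=p_1\boxtimes\cdots\boxtimes p_k$ of primitive binomials of $I_{\tilde\E_n}$ (or $I_{\tilde\Z_n}$), each of which, by Theorem~\ref{thm:Gbasis-of-A(edgeRho)-using-graphs}, either arises from a cycle of the bipartite graph $G_n$ (hence lies in the Graver basis $\G_n$ of $I_{\A_n}$) or from a primitive even closed walk of $K_n$ (hence lies in the Graver basis of the quadric ideal $Q$ of Theorem~\ref{thm:simplified-EdgeRho-decomposition}). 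Proving the conjecture amounts to two strengthenings of this: (a) replace ``Graver basis'' by a \emph{minimal} Markov basis, allowing as primitive building blocks only the Markov basis of $I_{\A_n}$ together with binomials supported on at most $n-1$ nodes; and (b) show the resulting set of lifts and overlaps is \emph{irredundant}.

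For (a), grant Conjecture~\ref{thm:minimal-Markov-simplified-noRho} and its edge-dependent analogue, so that the minimal generators of the simplified ideals are the degree-$2$ and degree-$3$ binomials --- the $4$-cycles and $6$-cycles of $G_n$ --- together with the quadric generators of $Q$. Then induct on combinatorial length: a cycle binomial of $G_n$ of length $2\ell$ with $\ell\ge 3$ is split along a chord, an edge $(\alpha_i,\beta_j)\in E(G_n)$ joining two non-consecutive vertices of the cycle, into an overlap of a shorter cycle binomial and a $4$-cycle binomial. Since $G_n=K_{n,n}\setminus\{(i,i)\}_{i=1}^n$ omits only the diagonal, any two vertices of one colour class have a common neighbour in the other, so such a chord always exists, and Proposition~\ref{prop:recursion-for-Gbasis-of-A(noRho)} lets us arrange that the two factors each omit some node, giving the reduction to $(n-1)$-node subnetworks. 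The three shapes of primitive even closed walk of $K_n$ feeding $Q$ (an even cycle; two odd cycles sharing a vertex; two odd cycles linked by two connecting walks) are handled by the same splitting, now using that $K_n$ has all chords, so only the quadric generators of $Q$ are needed. Each overlap step is re-homogenized by the lifting operation $\tilde q$ (padding with unused $p_{ij}(0,0)$ or $p_{ij}(1,1)$), which by Proposition~\ref{thm:geometry-of-p1:multihomog} keeps the outcome inside $I_{\E_n}$ (or $I_{\Z_n}$).

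Step (b), minimality, is where I expect the genuine difficulty, and presumably the reason the statement remains conjectural. As the paper stresses, the multi-homogeneous subideal of $I_{\tilde\E_n}$ need not be generated by the multi-homogeneous elements of a generating set of $I_{\tilde\E_n}$, so minimality cannot simply be imported from the simplified model --- indeed the $p_1$ models already have minimal generators of strictly larger degree than their simplified counterparts (for $n=4$, quintics versus degrees $2$ and $3$), and these extra generators are exactly the ``necessary'' overlaps. I would try to isolate them through the $\mathbb Z^{\binom n2}$-grading in which each dyad $\{i,j\}$ carries its own degree: then $I_{\E_n}$ is precisely the graded piece of $I_{\tilde\E_n}$ with one prescribed degree per dyad, and the number of minimal generators in a given multidegree is a multigraded Betti number, equal to the rank of the first reduced homology of the corresponding fiber graph of networks with a fixed in- and out-degree sequence and a fixed reciprocated-dyad pattern. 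The aim would be to show that the lift of each minimal generator of the simplified model occupies a multidegree whose fiber contains exactly two networks, hence is indispensable, while the overlaps genuinely required are exactly those in multidegrees not already covered by a smaller generator, every other overlap having support strictly containing that of a lower-degree generator and so being redundant. The crux I cannot see how to close is ruling out ``accidental'' new minimal generators --- multidegrees in which the sliced fiber graph is disconnected although no generator of the simplified ideal lives there --- and, with it, the combinatorial claim that repeated lifting and overlapping of $(n-1)$-node moves and of the Markov basis of $I_{\A_n}$ reaches every minimal generator. This appears to demand a fine analysis of the fiber graphs of $G_n$ and $K_n$ under the degree-sequence statistic, and in any case the whole argument is contingent on first establishing Conjecture~\ref{thm:minimal-Markov-simplified-noRho} (the constant-reciprocation case being a further complication, since there one lacks even a decomposition of $I_{\tilde\C_n}$ of the type used above).
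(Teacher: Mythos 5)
The statement you are asked about is labeled a conjecture in the paper, and the paper supplies no proof of it; the closest the paper comes is Theorem~\ref{thm:essentialMarkovForModel}, which shows only that Markov moves (not \emph{minimal} ones) can be obtained from the Graver basis of $I_{\A_n}$ together with generators of $Q$ via lifting and overlapping. Your proposal correctly treats the statement as open, and your attack plan is faithful to the paper's own machinery: you reuse the $\boxtimes$-decomposition from the proof of Theorem~\ref{thm:essentialMarkovForModel}, the decompositions $I_{\tilde\Z_n}=I_{\A_n}+T$ and $I_{\tilde\E_n}=I_{\A_n}+Q$, the recursion of Proposition~\ref{prop:recursion-for-Gbasis-of-A(noRho)}, and the multi-homogeneity slicing of Proposition~\ref{thm:geometry-of-p1:multihomog}. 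You also put your finger on exactly the obstruction the authors themselves flag --- that the multi-homogeneous part of an ideal need not be generated by the multi-homogeneous elements of a generating set --- and your proposal to control minimality via multigraded Betti numbers and connectivity of sliced fiber graphs is a sensible, genuinely new ingredient not present in the paper. You are also right that the whole enterprise is contingent on Conjecture~\ref{thm:minimal-Markov-simplified-noRho} and that the constant-reciprocation case lacks even the starting decomposition.

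One concrete slip in step (a): your chord-splitting claim, as stated for all cycles of length $2\ell$ with $\ell\ge 3$, fails at $\ell=3$. A $6$-cycle of $G_n$ of the form $(\alpha_i,\beta_j,\alpha_k,\beta_i,\alpha_j,\beta_k)$ has as its only candidate chords the diagonal edges $(\alpha_i,\beta_i)$, $(\alpha_j,\beta_j)$, $(\alpha_k,\beta_k)$, all of which are removed from $G_n$; so ``any two vertices of one colour class have a common neighbour'' does not produce a chord \emph{of the cycle} here. This is not fatal --- it is precisely why the cubics (reoriented triangles) must appear as irreducible building blocks in the conjectured minimal basis --- but the induction should begin at $\ell\ge 4$, where each $\alpha$ in the cycle has at least two non-consecutive $\beta$'s available and at most one is its forbidden diagonal partner. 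A second, smaller gloss (inherited from the paper's own proof of Theorem~\ref{thm:essentialMarkovForModel}): you pass from ``$f$ is in the Graver basis of $I_{\tilde\E_n}$'' to ``$f$ arises from a cycle of $G_n$ or a walk of $K_n$,'' but Theorem~\ref{thm:Gbasis-of-A(edgeRho)-using-graphs} describes a Gr\"obner basis, not the full Graver basis of the sum ideal, and primitive binomials of $I_{\tilde\E_n}$ could in principle mix variables from the $I_{\A_n}$ and $Q$ blocks; any completed argument would need to rule this out explicitly.
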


We close this section by remarking that these various liftings imply a (possibly non-sharp) degree bound on the Markov and Gr\"obner bases for the model.
For example, each lifting of the first stype will add at most ${n\choose 2}$ edges to each monomial of $q$ thus increasing the degree by at most ${n\choose 2}$ from
the degree needed for the simplified model, while overlapping $k$ binomials will allow the degrees of generators to increase $k$ times.  

Note that we have already seen lifts and overlaps in Examples \ref{eg:overlap} and \ref{eg:overlap-and-lift}.

%%%%%%%%%%%%%%%%%%%%%%%%%%%%%%%%%%%%%%%%%%%%%%%%%%%%%%%%%%%%%%%%%%%%%%%%%%%%%%%%%%%%
\smallskip

\section{The marginal Polytope and the MLE}\label{sec:mle}

In this section we describe the geometric properties of $p_1$ models and we investigate conditions for existence of the MLE.

\subsection{Geometric Properties of $p_1$ Models}

As we explained in section \ref{sec:p1.model}, the statistical model specified by a $p_1$ model with design matrix $A$ is the set $M_A$ of all vectors satisfying the Holland and Leinhardt equations \eqref{eq:hl.eqs}. 
Letting $V_A \subset \mathbb{R}^{2n(n-1)}$ be the toric variety corresponding to the toric ideal $I_A$ (see section \ref{section:algebra-toric-ideal-of-p1}), it is readily verified that the closure of $M_A$ is precisely the set
\[
S_A := V_A \cap D_n,
\] 
where $D_n =  \left\{ \left(p_{1,2}, p_{1,3}\ldots,p_{n-1,n} \right) \colon p_{i,j} \in \Delta_3, \forall i \neq j \right\}$, with $\Delta_3$ the standard $3$-simplex.  Any point in $S_A$ is comprised of ${n \choose 2}$ probability distributions over the four possible dyad configurations $\{(0,0)$, $(1,0)$, $(0,1)$, $(1,1)\}$, one for each pair of nodes in the network. However, in addition to all strictly positive dyadic probabilities distributions satisfying \eqref{eq:hl.eqs}, $S_A$ contains also probability distributions with zero coordinates that are obtained as pointwise limits of points in $M_A$. 

The {\it marginal polytope} of the $p_1$ model with design matrix $A$ is defined as
\[
P_A := \{ t \colon t = A p, p \in D_n\}.
\]
As we will see, $P_A$ plays a crucial role for deciding whether the MLE exists given an observed network $x$.
For our purposes, it is convenient to represent $P_A$ as a Minkowski sum. To see this, decompose the matrix $A$ as
\[
A = [A_{1,2} A_{1,3} \ldots A_{n-1,n}], 
\]
where each $A_{i,j}$ is the submatrix of $A$ corresponding to the four indeterminates
\[
\{ p_{i,j}(0,0),p_{i,j}(1,0), p_{i,j}(0,1),p_{i,j}(1,1) \}
\] 
for the $(i,j)$-dyad. Then, denoting by $\sum$ the Minkowski sum of polytopes, it follows that 
\[
P_A = \sum_{i < j} \mathrm{conv}(A_{i,j}).
\]
We will concern ourselves with the boundary of $P_A$, which we will handle using the following theorem, due to Gritzman and Sturmfels \cite{gritzman.sturmfels:1993}. For a polytope $P \subset \mathbb{R}^d$ and a vector $c \in \mathbb{R}^d$, we write
\[
S(P ; c) := \{ x \colon x^\top c \geq y^\top c, \forall y \in P\}
\]
for the set of maximizers $x$ of the inner product $c^\top x$ over $P$.
\begin{theorem}[\cite{gritzman.sturmfels:1993}]\label{thm:GS:93}
Let $P_1, P_2, \ldots, P_k$ be polytopes in $\mathbb{R}^d$ and let $P = P_1 + P_2 +  \ldots + P_k$. 
A nonempty subset $F$ of $P$ is a face of $P$ if and only if 
$F = F_1 + F_2 + \ldots + F_k$ for some face $F_i$ of $P_i$ such that there exists $c \in \mathbb{R}^d$  with $F_i = S(P_i ; c)$ for all $i$. Furthermore, the decomposition 
$F = F_1 + F_2 + \ldots + F_k$ of any nonempty face $F$ is unique. 
\end{theorem}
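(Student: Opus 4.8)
The plan is to reduce everything to a single additivity identity for maximizer sets over Minkowski sums, namely
\[
S\Bigl(\sum_{i=1}^k P_i;\, c\Bigr) \;=\; \sum_{i=1}^k S(P_i;c)\qquad\text{for every } c\in\mathbb{R}^d .
\]
To prove this I would first record that a linear functional splits additively over a Minkowski sum, so $\max_{x\in P}c^\top x=\sum_i\max_{x_i\in P_i}c^\top x_i$; then a point $x=\sum_i x_i$ with $x_i\in P_i$ attains the maximum over $P$ if and only if each $x_i$ attains the maximum over $P_i$, which is exactly the displayed equality. I would isolate this as a lemma with a three-line proof.

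Granting the identity, the ``if and only if'' is immediate. Recall the standard fact that the nonempty faces of a polytope $P$ are exactly the sets $S(P;c)$, $c\in\mathbb{R}^d$ (with $c=0$ recovering $F=P$); I would cite this rather than reprove it. If $F$ is a face of $P$, choose $c$ with $F=S(P;c)$ and put $F_i:=S(P_i;c)$: each $F_i$ is a face of $P_i$, they share the witness $c$, and the identity gives $F=\sum_i F_i$. Conversely, if $F=\sum_i F_i$ with $F_i=S(P_i;c)$ for a common $c$, the identity gives $F=S(P;c)$, a face of $P$.

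For uniqueness, suppose $F=\sum_i F_i=\sum_i G_i$ are two such decompositions, with $F_i=S(P_i;c)$ and $G_i=S(P_i;c')$. Two observations drive the argument. First, if $y\in F=S(P;c)$ and $y=\sum_i z_i$ is \emph{any} Minkowski decomposition with $z_i\in P_i$, then $c^\top y=\sum_i c^\top z_i\le\sum_i\max_{x\in P_i}c^\top x=\max_{x\in P}c^\top x=c^\top y$ forces $c^\top z_i=\max_{x\in P_i}c^\top x$, i.e.\ $z_i\in F_i$; applying the same to $c'$ gives $z_i\in G_i$. Second, I would invoke $\mathrm{relint}\bigl(\sum_i F_i\bigr)=\sum_i\mathrm{relint}(F_i)$, so that picking $y\in\mathrm{relint}(F)$ yields a decomposition $y=\sum_i z_i$ with $z_i\in\mathrm{relint}(F_i)$. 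Combining, $z_i\in\mathrm{relint}(F_i)\cap G_i$. Since $F_i\cap G_i$ is a face of the polytope $F_i$ containing a relative-interior point of $F_i$, it must equal $F_i$, i.e.\ $F_i\subseteq G_i$; by symmetry $G_i\subseteq F_i$, hence $F_i=G_i$ for every $i$.

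The step I expect to be the genuine obstacle is uniqueness: one has to marshal the two auxiliary facts (every Minkowski decomposition of a point of $F$ respects the face decomposition, and $\mathrm{relint}$ commutes with Minkowski sums) and check they combine to pin down each summand; the ``if and only if'' is essentially free once the additivity identity is in hand. An alternative route to the whole statement, which I would mention, runs through normal fans: $\mathcal{N}(P)$ is the common refinement of the $\mathcal{N}(P_i)$, and the cone of $\mathcal{N}(P)$ whose relative interior contains $c$ determines each $S(P_i;c)$, making both the correspondence and its uniqueness transparent at the cost of setting up the common-refinement machinery.
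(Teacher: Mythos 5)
The paper does not prove this statement: it is imported verbatim from Gritzmann and Sturmfels \cite{gritzman.sturmfels:1993} as a known result, so there is no in-paper argument to compare yours against. On its own terms your proof is correct and complete. The additivity identity $S\bigl(\sum_i P_i;c\bigr)=\sum_i S(P_i;c)$ is established properly in both directions (the equality-forcing step correctly shows that \emph{every} Minkowski decomposition of a maximizer consists of maximizers, which is exactly what you need later), the equivalence then follows from the standard fact that the nonempty faces of a polytope are precisely the sets $S(P;c)$ (with $c=0$ giving the improper face), and the uniqueness argument is sound: you combine the decomposition-forcing observation with $\mathrm{relint}\bigl(\sum_i F_i\bigr)=\sum_i\mathrm{relint}(F_i)$ to produce a point $z_i\in\mathrm{relint}(F_i)\cap G_i$, and then the fact that a nonempty intersection of two faces of $P_i$ is a face of each, so a face of $F_i$ meeting $\mathrm{relint}(F_i)$ must be all of $F_i$. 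The only things you lean on without proof are genuinely standard (exposedness of polytope faces, Rockafellar's relint-of-a-sum identity, closure of the face lattice under intersection), and you flag them appropriately. The normal-fan alternative you sketch is indeed the slicker high-level route and is essentially how the original reference frames the result, but your elementary argument is self-contained and would serve as a legitimate substitute for the citation.
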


The marginal polytope $P_A$ and the set $S_A$ are closely related. Indeed, as shown in \cite{morton:2008}, the sets $S_A$ and $P_A$ are homeomorphic:
\begin{theorem}[\cite{morton:2008}]\label{thm:moment.map}
The map $\mu \colon S_A \rightarrow P_A$ given by
\[
p \mapsto A p
\]
is a homemorphism.
\end{theorem}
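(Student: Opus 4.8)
The plan is to exhibit $\mu$ as the restriction to $S_A = V_A \cap D_n$ of the classical moment map of the toric variety $V_A$, and then invoke the standard fact (see \cite{morton:2008}, and also the toric-geometry literature) that this moment map restricts to a homeomorphism from the nonnegative real part of a projective toric variety onto the corresponding polytope. Concretely, I would first observe that the map $p \mapsto Ap$ is manifestly continuous, and since $S_A$ is compact (it is the intersection of the Zariski-closed set $V_A$ with the compact product of simplices $D_n$) and $P_A = \mu(D_n \cap V_A)$ is Hausdorff, it suffices to prove that $\mu$ is a continuous bijection of $S_A$ onto $P_A$; a continuous bijection from a compact space to a Hausdorff space is automatically a homeomorphism. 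So the crux reduces to injectivity and surjectivity.

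For surjectivity, I would argue that every point $t \in P_A$ is the image under $\mu$ of some point of $S_A$, not merely of $D_n$. By definition $t = Ap$ for some $p \in D_n$; the point is that one can always choose such a preimage lying on the variety $V_A$. This is where the product-multinomial (multi-homogeneous) structure enters: within each dyad block the coordinates of $p$ sum to $1$, and the fibers of $A$ restricted to $D_n$ always meet $V_A$ because the log-linear model $M_A$ is the full relative interior of $S_A$ and the sufficient-statistic map $p \mapsto Ap$ is surjective onto the relative interior of $P_A$ from $M_A$ already (this is the standard exponential-family fact that the mean-value parametrization covers the interior of the convex support); boundary points of $P_A$ are then handled by taking limits, using compactness of $S_A$ and continuity of $\mu$. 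For injectivity, I would use the strict convexity/concavity available from the exponential-family interpretation: two points of $M_A$ with the same image under $A$ would give the same sufficient statistics, but $M_A \to P_A$ is injective because the log-linear parametrization $\log p = A^\top \zeta$ together with the normalization makes the map a diffeomorphism onto the interior of $P_A$ (minimal exponential family after accounting for the affine dependencies). The extension of injectivity to the boundary points of $S_A$ follows from Theorem \ref{thm:GS:93}: a point $p \in S_A \setminus M_A$ lies in the relative interior of a unique proper face $F$ of $D_n$ obtained by setting some dyad coordinates to zero, its image lies in the corresponding face $\sum_{i<j}\mathrm{conv}(A_{i,j})'$ of $P_A$ whose Minkowski decomposition is unique by Gritzmann--Sturmfels, and on that face the same minimality argument applied to the restricted (still log-linear) model gives injectivity.

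The main obstacle I anticipate is the careful bookkeeping at the boundary: one must check that the face structure of $S_A$ induced by vanishing coordinates corresponds bijectively, via $\mu$, to the face structure of $P_A$, and that $\mu$ maps relative interiors of faces of $S_A$ onto relative interiors of faces of $P_A$. The tool for this is precisely Theorem \ref{thm:GS:93}, which guarantees that each face of $P_A$ has a \emph{unique} Minkowski decomposition into faces of the summands $\mathrm{conv}(A_{i,j})$; matching each such decomposition with the pattern of which dyadic sub-distributions are degenerate (and in which of the four states) is the delicate combinatorial step. Once this correspondence is set up, injectivity and surjectivity on each face reduce to the already-known interior case, and the compactness argument closes the proof. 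I would present the interior case quickly by citing the exponential-family/moment-map correspondence and devote the bulk of the write-up to the face-by-face reduction, since that is where the product-multinomial sampling constraint makes the situation differ from the textbook toric picture.
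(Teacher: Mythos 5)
The paper does not actually prove this statement: it is imported verbatim from \cite{morton:2008} and presented as a known generalization of the classical moment-map theorem for projective toric varieties (\cite{fulton:1993}, \cite{ewald:1996}) to the multi-homogeneous, product-Multinomial setting, so there is no in-paper argument to compare yours against. That said, your strategy is the natural one and uses exactly the tools the paper deploys elsewhere: the reduction of ``homeomorphism'' to ``continuous bijection'' via compactness of $S_A$, the exponential-family mean-value parametrization for the bijection between $M_A$ and $\mathrm{ri}(P_A)$, and the unique Minkowski face decomposition of Theorem \ref{thm:GS:93} for the boundary are precisely the ingredients that reappear in the paper's own proof of Theorem \ref{thm:mle}. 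Your compactness framing of surjectivity (the image is compact, contains $\mathrm{ri}(P_A)$, hence contains its closure) is clean and correct.

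The one step that is genuinely incomplete is the one you flag yourself: the identification of the supports of points of $S_A$ with facial sets, and the claim that $\mu$ carries each such boundary stratum into the relative interior of the matching face of $P_A$. This is not mere bookkeeping --- it is the actual content of the generalization. In particular, a point $p \in D_n$ with a zero coordinate does not a priori map to the relative boundary of $P_A$: you need the affine independence of the columns of each $A_{i,j}$ (so that $\mathrm{conv}(A_{i,j})$ is a simplex and a vanishing coordinate forces $A_{i,j}p_{i,j}$ into a proper face), and then the Gritzmann--Sturmfels maximizer argument to conclude that a point admitting a decomposition with one summand on the boundary of its simplex, while also lying on the variety $V_A$, cannot sit in $\mathrm{ri}(P_A)$ --- otherwise interior surjectivity would produce a second, strictly positive preimage and injectivity would already fail. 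You would also need to check that the restriction of the model to each facial set is again a toric model of the same shape, so that the interior argument genuinely applies face by face; the paper points to the appendix of \cite{gms:06} for exactly this. With those two items supplied, your outline closes.
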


 This result is a generalization of the moment map theorem for toric varieties defined by homogeneous toric ideals (see \cite{fulton:1993} and \cite{ewald:1996}) to the  multi-homogeneous case.  In particular, it implies that $P_A = \{ t \colon t = A p, p \in S_A\}$.

\subsection{Existence of the MLE in $p_1$ Models}

We focus on two important problems related to maximum likelihood estimation in $p_1$ models that have been traditionally overlooked in both theory and practice. Our results should be compared in particular, with the ones given in \cite{haberman:77}. The geometric properties of $S_A$ and $P_A$ established above are fundamental tools of our analysis.
 
We begin 
a result justifying the name for the marginal polytope.
\begin{lm}\label{lm:marg.pol}
The polytope $P_A$ is the convex hull of the set of all possible observable marginals; in symbols:
\[
P_A = \mathrm{conv}(\{ t \colon t = Ax, x \in \mathcal{X}_n \}).
\]
\end{lm}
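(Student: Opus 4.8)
The plan is to prove the two inclusions between the convex set $P_A = \sum_{i<j}\mathrm{conv}(A_{i,j})$ and $C := \mathrm{conv}(\{Ax : x \in \mathcal{X}_n\})$ separately, exploiting the product structure of the sample space $\mathcal{X}_n$. Recall that each $x \in \mathcal{X}_n$ decomposes as $x = (x_{1,2},\dots,x_{n-1,n})$ where each $x_{i,j}$ is a vertex of $\Delta_3$, so $A x = \sum_{i<j} A_{i,j} x_{i,j}$ and each summand $A_{i,j}x_{i,j}$ is exactly a column of $A_{i,j}$, i.e. a vertex of $\mathrm{conv}(A_{i,j})$.

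For the inclusion $C \subseteq P_A$: since $P_A$ is convex, it suffices to check that every generator $Ax$ with $x \in \mathcal{X}_n$ lies in $P_A$. By the decomposition just noted, $Ax = \sum_{i<j} A_{i,j} x_{i,j}$ is a sum of one vertex of each $\mathrm{conv}(A_{i,j})$, hence lies in the Minkowski sum $\sum_{i<j}\mathrm{conv}(A_{i,j}) = P_A$. Taking convex hulls preserves this, giving $C \subseteq P_A$.

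For the reverse inclusion $P_A \subseteq C$: a point of $P_A$ has the form $t = \sum_{i<j} v_{i,j}$ with $v_{i,j} \in \mathrm{conv}(A_{i,j})$. Each $v_{i,j}$ is a convex combination $\sum_{k=1}^4 \lambda^{(i,j)}_k \, a^{(i,j)}_k$ of the four columns $a^{(i,j)}_k$ of $A_{i,j}$, where $(\lambda^{(i,j)}_k)_k \in \Delta_3$. The key move is to recognize the resulting expression for $t$ as a convex combination over the \emph{product} of simplices: expanding the Minkowski sum,
\[
t = \sum_{(k_{1,2},\dots,k_{n-1,n})} \Big( \prod_{i<j} \lambda^{(i,j)}_{k_{i,j}} \Big) \Big( \sum_{i<j} a^{(i,j)}_{k_{i,j}} \Big),
\]
where the outer sum ranges over all tuples selecting one of the four configurations for each dyad. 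The coefficients $\prod_{i<j}\lambda^{(i,j)}_{k_{i,j}}$ are nonnegative and sum to $\prod_{i<j}\big(\sum_k \lambda^{(i,j)}_k\big) = 1$, and each inner vector $\sum_{i<j} a^{(i,j)}_{k_{i,j}}$ equals $Ax$ for the network $x \in \mathcal{X}_n$ that puts dyad $(i,j)$ in configuration $k_{i,j}$. Hence $t$ is a convex combination of points of the form $Ax$, so $t \in C$.

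The only genuinely delicate point is the bookkeeping in the expansion of the Minkowski sum: one must be careful that distributing the product of the four-term convex combinations over all dyads yields exactly a convex combination indexed by $\mathcal{X}_n$, with the product weights summing to one. This is a routine (if slightly tedious) interchange of sums and products, and once it is spelled out both inclusions follow immediately, completing the proof. Alternatively, one can observe that the vertices of $\mathrm{conv}(A_{i,j})$ are among its defining columns and invoke the general fact that the Minkowski sum of polytopes equals the convex hull of all sums of vertices (one vertex per summand); this gives $P_A = \mathrm{conv}\{\sum_{i<j} a^{(i,j)}_{k_{i,j}}\} = \mathrm{conv}\{Ax : x \in \mathcal{X}_n\} = C$ directly, with the product structure of $\mathcal{X}_n$ accounting for the indexing.
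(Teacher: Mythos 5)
Your proof is correct, but the second inclusion is handled by a genuinely different (and more elementary) route than the paper's. For $P_A \subseteq \mathrm{conv}\{Ax : x\in\mathcal X_n\}$ you expand each $v_{i,j}\in\mathrm{conv}(A_{i,j})$ as a convex combination of the columns of $A_{i,j}$ and then distribute the product over all dyads, exhibiting any $t\in P_A$ explicitly as a convex combination, indexed by the product structure of $\mathcal X_n$, of points $Ax$; this is just the standard identity $\sum_{i<j}\mathrm{conv}(S_{i,j})=\mathrm{conv}\bigl(\sum_{i<j}S_{i,j}\bigr)$ proved from scratch, and your bookkeeping (weights $\prod_{i<j}\lambda^{(i,j)}_{k_{i,j}}$ summing to $1$, each inner sum equal to $Ax$) checks out. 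The paper instead invokes the Gritzmann--Sturmfels theorem (Theorem \ref{thm:GS:93}) on faces of Minkowski sums, applied to the extreme points of $P_A$: every vertex of $P_A$ decomposes uniquely as a sum of vertices of the $\mathrm{conv}(A_{i,j})$, and since the columns of each $A_{i,j}$ are affinely independent they are exactly those vertices, so $\mathrm{ext}(P_A)\subseteq\{Ax : x\in\mathcal X_n\}$ and one concludes by taking convex hulls. Your argument is self-contained and needs neither the face-decomposition theorem nor the affine independence of the columns; the paper's version costs essentially nothing extra because Theorem \ref{thm:GS:93} and the vertex structure of $P_A$ are needed anyway in the proof of Theorem \ref{thm:mle}, where the same decomposition is reused for higher-dimensional faces. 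Either proof is acceptable.
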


\begin{proof}
If $t = Ax$ for some  $x \in \mathcal{X}_n$, then, by definition, $t$ is in the Minkowski sum of $\mathrm{conv}(A_{i,j})$'s for all $i<j$, yielding that $\mathrm{conv}(\{ t \colon t = Ax, x \in \mathcal{X}_n \})$ is a subset of $P_A$. Conversely, by theorem \ref{thm:GS:93}, any extreme point $v$ of $P_A$ can be written as $v = v_{1,2} + v_{1,3} + \ldots + v_{n-1,n}$, where each $v_{i,j}$ is an extreme point of $\mathrm{conv}(A_{i,j})$ such that $v_{i,j} = S(\mathrm{conv}(A_{i,j});c)$ for some vector $c$ and all $i < j$. Since, for every $i \neq j$, the columns of $A_{i,j}$ are affinely independent, they are also the extreme points of $\mathrm{conv}(A_{i,j})$. Therefore, the extreme points of $P_A$ are a subset of $\{ t \colon t = Ax, x \in \mathcal{X}_n \}$, which implies that $P_A$ is a subset of $\mathrm{conv}(\{ t \colon t = Ax, x \in \mathcal{X}_n \})$. 
\end{proof}

Let $x$ denote the observed network. From standard statistical theory of exponential families (see, e.g., \cite{brown:86} and \cite{lau:96}), the MLE $\hat{p}$ exists if and only if the vector of margins $t = Ax$ belongs to the relative interior of $P_A$, denoted by $\mathrm{ri}(P_A)$. Furthermore, it is always unique and satisfies the {\it moment equations,} which we formulate using the moment map of Theorem \ref{thm:moment.map} as 
\begin{equation}\label{eq:mle}
\hat{p} = \mu^{-1}(t).
\end{equation}

When the MLE does  not exists, the likelihood function still achieves its supremum at a unique point, which also satisfies the moment equations \eqref{eq:mle}, and is called the {\it extended MLE}. The extended MLE is always a point on the boundary of $S_A$ and, therefore, contains zero coordinates. Although not easily interpretable in terms of the model parameters of the Holland and Leinhardt equations \eqref{eq:hl.eqs}, the extended MLE still provides a perfectly valid probabilistic representation of a network, the only difference being that certain network  configurations have a zero probability of occurring.

Here we are concerned with the following two problems that are essential for computing the MLE and extended MLE:
\begin{enumerate}
\item  decide whether the MLE exists, i.e. whether observed vector of margins $t$ belongs to $\mathrm{ri}(P_A)$, the relative interior of the marginal polytope; 
\item compute  $\mathrm{supp}(\hat{p})$, the support of $\hat{p}$, where $\mathrm{supp}(x) = \{ i \colon x_i \neq 0\}$. Clearly, this second task is nontrivial only when $t$ is a point on the relative boundary of $P_A$, denoted by $\mathrm{rb}(P_A)$, and we are interested in the extended MLE.
\end{enumerate}

Both problems require handling the geometric and combinatorial properties of the marginal polytope $P_A$ and of its faces, and to relate any point in $\mathrm{rb}(P_A)$ to the boundary of $S_A$. In general, this is challenging, as the combinatorial properties of Minkowki sums tend to be high. Fortunately, we can simplify these tasks by resorting to a larger polyhedron that is simpler to work with.

Let $C_A = \mathrm{cone}(A)$ be the {\it marginal cone} (see \cite{mle1:06}), which is easily seen to the convex hull of all the possible observable sufficient statistics if there were no constraint on the number of possible observations per dyad. Notice that, since the columns of $A$ are affinely independent, they define the extreme rays of $C_A$. Following \cite{gms:06}, a set $\mathcal{F} \subseteq \{1,2,\ldots,2n(n-1) \}$ is called a \emph{facial set} of $C_A$ if there exists a $c$ such that 
\[
c^\top a_i = 0, \;\;\; \forall i \in \mathcal{F} \quad \text{and} \quad c^\top a_i < 0, \;\;\;  \forall i \not \in \mathcal{F},
\]
where $a_i$ indicates the $i$-th column of $A$.  It follows that $F$ is face of $C_A$ if and only if $F = \mathrm{cone}(\{a_i \colon i \in \mathcal{F}\})$, for some facial set $\mathcal{F}$ of $C_A$, and that there is a one-to-one correspondence between the facial sets and the faces of $C_A$.

In our main result of this section, we show that the existence of the MLE can be determined using the simpler set $C_A$ and that the supports of the points on the boundary of $S_A$ are facial sets of $C_A$. 

\begin{theorem}\label{thm:mle}
Let $t \in P_A$. Then, $t \in \mathrm{ri}(P_A)$ if and only if $t \in \mathrm{ri}(C_A)$.
Furthermore, for every face $G$ of $P_A$ there exists one facial set $\mathcal{F}$ of $C_A$ such that $\mathcal{F} = \mathrm{supp}(p)$, where $p = \mu^{-1}(t)$, for each $t \in \mathrm{ri}(G)$.
\end{theorem}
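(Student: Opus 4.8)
The plan is to treat the two assertions in turn, exploiting the Minkowski description $P_A=\sum_{i<j}\mathrm{conv}(A_{i,j})$, the cone $C_A=\mathrm{cone}(A)$, and the standard fact that taking relative interiors commutes with linear images. Applying the latter to $A$ gives the explicit descriptions $\mathrm{ri}(C_A)=\{Ap:p\in\mathbb{R}^{2n(n-1)}_{>0}\}$ and $\mathrm{ri}(P_A)=A(\mathrm{ri}\,D_n)=\{Ap:p>0,\ \sum_{a,b}p_{i,j}(a,b)=1\ \text{for all }i<j\}$. For the first assertion, the inclusion $\mathrm{ri}(P_A)\subseteq\mathrm{ri}(C_A)$ is then immediate, since $\mathrm{ri}\,D_n\subseteq\mathbb{R}^{2n(n-1)}_{>0}$. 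For the converse, let $t\in P_A\cap\mathrm{ri}(C_A)$ and write $t=Ap$ with $p>0$. The row of $A$ indexed by $\lambda_{i,j}$ is the indicator of the block of columns $\{p_{i,j}(a,b)\}_{a,b}$, because $\lambda_{i,j}$ divides the image of $p_{i,j}(a,b)$ for every $a,b$ and of no other indeterminate. Since $t\in P_A$, some $q\in D_n$ has $Aq=t$, so the $\lambda_{i,j}$-coordinate of $t$ equals $\sum_{a,b}q_{i,j}(a,b)=1$; reading off the same coordinate from $t=Ap$ gives $\sum_{a,b}p_{i,j}(a,b)=1$ for every $i<j$. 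Hence $p\in\mathrm{ri}\,D_n$ and $t\in\mathrm{ri}(P_A)$. This part is pure bookkeeping with the $\lambda_{i,j}$ rows and should present no difficulty.

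For the second assertion, fix a face $G$ of $P_A$. By Theorem~\ref{thm:GS:93}, write $G=\sum_{i<j}G_{i,j}$ with $G_{i,j}=S(\mathrm{conv}(A_{i,j});c)$ for a common $c$. Since the columns of $A_{i,j}$ are the extreme points of $\mathrm{conv}(A_{i,j})$ (as used in the proof of Lemma~\ref{lm:marg.pol}), $G_{i,j}=\mathrm{conv}(\{a_k:k\in\mathcal F_{i,j}\})$, where $\mathcal F_{i,j}$ is the nonempty set of indices in block $(i,j)$ achieving $\max_k c^\top a_k$; set $\mathcal F=\bigcup_{i<j}\mathcal F_{i,j}$. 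First I would verify that $\mathcal F$ is a facial set of $C_A$: with $M_{i,j}=\max_{k\in\text{block }(i,j)}c^\top a_k$ and $e_{\lambda_{i,j}}$ the coordinate functional dual to the $\lambda_{i,j}$-row, the vector $c':=c-\sum_{i<j}M_{i,j}\,e_{\lambda_{i,j}}$ satisfies $c'^\top a_k=c^\top a_k-M_{i,j}$ for $k$ in block $(i,j)$ (because $e_{\lambda_{i,j}}^\top a_k=1$ exactly on that block), which is $\le 0$ with equality iff $k\in\mathcal F_{i,j}$; thus $c'$ witnesses $\mathcal F$. Note also that $G=P_{A_{\mathcal F}}$, the marginal polytope of the restricted design matrix $A_{\mathcal F}$ obtained by deleting the columns outside $\mathcal F$: since each $\mathcal F_{i,j}$ is nonempty, $A_{\mathcal F}$ retains its own $\lambda_{i,j}$ rows and is again the design matrix of a multi-homogeneous log-linear model, so Theorem~\ref{thm:moment.map} and the maximum likelihood existence criterion apply to it.

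Now fix $t\in\mathrm{ri}(G)$ and set $p=\mu^{-1}(t)\in S_A$. To obtain $\mathrm{supp}(p)\subseteq\mathcal F$: every point of $G$ is a sum of convex combinations of $\{a_k:k\in\mathcal F\}$, each annihilated by $c'$, so $c'^\top t=0$; then $0=c'^\top(Ap)=\sum_k(c'^\top a_k)p_k$ with all $c'^\top a_k\le 0$ and $p_k\ge 0$ forces $p_k=0$ whenever $c'^\top a_k<0$, i.e.\ whenever $k\notin\mathcal F$. Consequently the restriction $r:=(p_k)_{k\in\mathcal F}$ lies in $S_{A_{\mathcal F}}$ --- it lies in the product of simplices of the restricted model since deleting zero coordinates leaves the block sums unchanged, and it satisfies every binomial of $I_{A_{\mathcal F}}$ because $p$ satisfies the corresponding binomial of $I_A$ --- and $A_{\mathcal F}r=Ap=t$, so Theorem~\ref{thm:moment.map} applied to $A_{\mathcal F}$ gives $r=\mu_{\mathcal F}^{-1}(t)$. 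Finally, $t\in\mathrm{ri}(G)=\mathrm{ri}(P_{A_{\mathcal F}})$, so the MLE of the model $A_{\mathcal F}$ at the margin $t$ exists (the existence criterion recalled before the theorem is a statement about all margins in the relative interior of the marginal polytope, not only integral ones), whence $\mu_{\mathcal F}^{-1}(t)=r$ belongs to the strictly positive model $M_{A_{\mathcal F}}$; that is, $p_k>0$ for all $k\in\mathcal F$. Combining the two inclusions yields $\mathrm{supp}(p)=\mathcal F$, and since $\mathcal F$ was built from $G$ alone, the same facial set works for every $t\in\mathrm{ri}(G)$.

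The step I expect to be the crux is the lower bound $\mathrm{supp}(p)\supseteq\mathcal F$: the first assertion and the inclusion $\mathrm{supp}(p)\subseteq\mathcal F$ are essentially formal, but ruling out that the support of $\mu^{-1}(t)$ is only a proper facial subset of $\mathcal F$ is exactly where one must use that $t$ lies in the \emph{relative interior} of $G$ rather than merely in $G$, and it leans on the nontrivial direction of the maximum likelihood existence theorem (that margins in the relative interior of a marginal polytope are attained by strictly positive distributions within the model), here applied to the sub-model $A_{\mathcal F}$. A secondary technical point is the legitimacy of invoking Theorem~\ref{thm:moment.map} and the MLE criterion for $A_{\mathcal F}$, which is unproblematic precisely because $\mathcal F$, being a facial set, meets every dyadic block.
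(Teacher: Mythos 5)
Your proof is correct and shares the paper's overall skeleton (the Minkowski-sum description of $P_A$, Theorem \ref{thm:GS:93}, the role of the $\lambda_{i,j}$ rows, and facial sets of $C_A$), but it executes the two delicate steps by genuinely different routes, and in both places your version is the more rigorous one. For the first assertion, the paper proves the forward direction by simply citing $P_A\subset C_A$ with both sets closed (which by itself does not give containment of relative interiors) and the converse by contradiction with the uniqueness of the Gritzmann--Sturmfels decomposition; you instead use that relative interior commutes with linear images to get $\mathrm{ri}(C_A)=\{Ap:p>0\}$ and $\mathrm{ri}(P_A)=A(\mathrm{ri}\,D_n)$, and then read the normalization $\sum_{a,b}p_{i,j}(a,b)=1$ off the $\lambda_{i,j}$-coordinates of $t$ --- this proves both directions at once and repairs the glib forward implication. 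For the second assertion, the paper obtains the facial set abstractly from the uniqueness of the face decomposition (with a questionable aside that ``the columns of $A$ are affinely independent'') and then defers the crux $\mathrm{supp}(\mu^{-1}(t))=\mathcal F$ to ``inspection of the proof'' of Theorem \ref{thm:moment.map} and the appendix of \cite{gms:06}; you construct the witnessing functional $c'=c-\sum_{i<j}M_{i,j}e_{\lambda_{i,j}}$ explicitly, get $\mathrm{supp}(p)\subseteq\mathcal F$ from the sign argument $0=\sum_k(c'^\top a_k)p_k$, and prove the reverse inclusion by restricting to the sub-design-matrix $A_{\mathcal F}$ and applying the already-proved interior criterion to the sub-model. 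What your approach buys is a self-contained reduction of the boundary case to the interior case; what it costs is the need to justify that the restriction $r$ actually lies in $S_{A_{\mathcal F}}$, i.e.\ that the nonnegative part of the toric variety is cut out by the toric ideal --- the same external fact the paper leans on via \cite{gms:06}, so the reliance is equivalent, just packaged more transparently. (A slight streamlining of your last step: rather than showing $r\in S_{A_{\mathcal F}}$, take $q=\mu_{\mathcal F}^{-1}(t)>0$, extend it by zeros on the complement of $\mathcal F$, check the extension lies in $S_A$ with margin $t$, and conclude by injectivity of $\mu$ that it equals $p$.)
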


\begin{proof}
By Lemma \ref{lm:marg.pol}, $P_A \subset C_A$. Thus, since both $P_A$ and $C_A$ are closed, $t \in \mathrm{ri}(P_A)$ implies $t \in \mathrm{ri}(C_A)$. 
For the converse statement, suppose $t$ belongs to a proper face $G$ of $P_A$. Then, by Theorem \ref{thm:GS:93},
\begin{equation}\label{eq:t}
t = t_{1,2} + t_{1,3} + \ldots + t_{n-1,n}
\end{equation}
where $t_{i,j}$ belongs to a face of $\mathrm{conv}(A_{i,j})$, for each $i < j$. We now point out two crucial features of $P_A$ that can be readily checked:
\begin{enumerate}
\item[(i)] the first ${n \choose 2}$ coordinates of any point in $P_A$ are all equal to $1$;
\item[(ii)] the first ${n \choose 2}$ coordinates of each $t_{i,j}$ are all zeros except one, which takes on the value  $1$.
\end{enumerate}
Suppose now that $t \in \mathrm{ri}(C_A)$. Because of (i) and (ii), there exists a point in $x \in D_n$ with strictly positive entries such that $t = A x$. In turn, this implies that
\[
t = t'_{1,2} + t'_{1,3} + \ldots + t'_{n-1,n},
\]
where $t'_{i,j} \in \mathrm{ri}(\mathrm{conv}(A_{i,j}))$ for each $i < j$, which contradicts \eqref{eq:t}. Thus, $t \in \mathrm{rb}(C_A)$.
To prove the second claim, notice that, because of the uniqueness of the decomposition of Theorem \ref{thm:GS:93}, our arguments further yield that, for every proper face $G$ of $P_A$, there exists one face $F$ of $C_A$ such that $\mathrm{ri}(G) \subseteq \mathrm{ri}(F)$. Consequently, to every face $G$ of $P_A$  corresponds one facial set $\mathcal{F}$ of $C_A$ such that, if $t \in \mathrm{ri}(G)$, then
\begin{equation}\label{eq:p}
t = Ap
\end{equation}
for some $p \in D_n$ with $\mathrm{supp}(p) = \mathcal{F}$. Since columns of $A$ are affinely independent, this $p$ is unique.  By Theorem \ref{thm:moment.map}, there exists a unique point $p' = \mu^{-1}(t) \in S_A$ satisfying \eqref{eq:p}, so that $\mathrm{supp}(p') \subseteq \mathcal{F}$. Inspection of the proof of the same theorem yields that, in fact, $\mathrm{supp}(p')= \mathcal{F}$ (see also the appendix in  \cite{gms:06}), so that $p'=p$. 
\end{proof}

From the algorithmic standpoint, the previous theorem is rather useful, as it allows us to work directly with $C_A$, whose face lattice is much simpler (for example, while we know the extreme rays of $C_A$, it is highly nontrivial to find the vertices of $P_A$ among $4^{n(n-1)}$ vectors of observable sufficient statistics).
Algorithms for deciding whether $t \in \mathrm{ri}(C_A)$ and for finding the facial set corresponding to the face $F$ of $C_A$ such that $t \in \mathrm{ri}(F)$ are presented in \cite{mle1:06} and \cite{mle2}. 

We conclude this section with a statistical remark. Using the terminology of log-linear modeling  theory (see, e.g., \cite{bfh:07}), $p_1$ models are log-linear models arising from a  {\it product-Multinomial sampling scheme.} This additional constraint is what makes computing the Markov bases and dealing with $P_A$ particularly complicated in comparison with the same tasks under Poisson or Multinomial sampling scheme. In these simpler sampling settings, the toric ideal $I_A$ is homogenous, all Markov moves are applicable (provided that their degree is smaller than $\sum_i x_i$ for the Multinomial scheme), and the model $S_A$ is homoemorphic to $C_A$ for the Poisson scheme, and to $\mathrm{conv}(A)$ for the Multinomial scheme.

\subsection{Computations}
We now briefly summarize some of the computations we carried out involving $P_A$ and $C_A$, based on {\tt polymake} (see \cite{polymake}), {\tt minksum} (see \cite{minksum}) and our implementation of the iterative proportional scaling algorithms for fitting $p_1$ models described in \cite{holl:lein:1981}. Though this is a rather incomplete and limited report, it does  provide some indications of the complexity of $p_1$ models and of the subtleties of the  maximum likelihood estimation problem. 

In Table \ref{tab:C_A} we report the number of facets, dimensions and ambient dimensions of the cones $C_A$ for different values of $n$ and for the three specification of the reciprocity parameters $\rho_{i,j}$ we consider here. It is immediate to see how large the combinatorial complexity of $C_A$ is, even for network of really small dimension.

\begin{table}\label{tab:C_A}
\begin{tabular}[t]{|c|ccc|ccc|ccc|}
\hline
$n$ & \multicolumn{3}{|c|}{$\rho_{i,j}=0$} & \multicolumn{3}{|c|}{$\rho_{i,j}=\rho$} & \multicolumn{3}{|c|}{$\rho = \rho_i + \rho_j$}\\
\hline
& Facets & Dim. & Ambient Dim. & Facets &  Dim. & Ambient Dim. & Facets &  Dim. & Ambient Dim.\\
\hline
3 & 30 & 7 & 9 & 56 & 8 & 10 & 56 & 8 & 13\\
4 & 132 & 12& 14& 348 & 13 & 15 & 348 & 13 & 19\\
5 & 660 & 18 & 20 & 3032 & 19 & 21 & 3032 & 19 & 26\\
6 & 3181 & 25 & 27 & 94337 & 26 & 28 & 94337 & 26 & 34\\
\hline
%7& & & & & & & & &\\
%8 & & & & & & & & &\\
%9 & & & & & & & & &\\
\end{tabular}
\caption{Number of facets, dimensions and ambient dimensions of the the cones $C_A$ for different specifications of the $p_1$ model and different network sizes.}
\end{table}

Another point of interest is the assessment of how often the existence of the MLE  arises. In fact, because of the product Multinomial sampling constraint, nonexistence of the MLE is quite severe, especially for smaller networks. However, in light of the following results, we are led to conjecture that the larger the network the higher the chance that the MLE t exists.   %the computations I did.
Below we report our finding:
\begin{enumerate}
\item  $n=3$.\\
The sample space consists of $4^3 = 64$ possible networks. when $\rho_{i,j} = 0$, there are  $63$ different observable sufficient statistics, only one of which belongs to $\mathrm{ri}(P_A)$. Thus, only one of the $63$ observable sufficient statistics leads to the existence of the  MLE. The corresponding fiber contains only the following two networks:
\begin{center}
\begin{verbatim}
0 0 1 0 0 1 0 0 0 0 1 0 
0 1 0 0 0 0 1 0 0 1 0 0 
\end{verbatim}
\end{center}
and the associated MLE is the $12$-dimensional vector with entries all equal to $0.25$.
Incidentally, the marginal polytope $P_A$ has $62$ vertices and $30$ facets.
When $\rho_{i,j} = \rho \neq 0$ or $\rho_{i,j} = \rho_i + \rho_j$ the MLE never exists. 
\item  $n=4$. \\
The sample space contains $4096$ observable networks. If $\rho_{i,j}=0$, there are $2,656$ different observable sufficient statistics, only $64$ of which yield existent MLEs. Overall, out of the $4096$ possible networks, only $426$ have MLEs.
When $\rho_{i,j}=\rho \neq 0$, there are $3,150$ different observable sufficient statistics, only $48$ of which yield existent MLEs. Overall, out of the $4096$ possible networks, only $96$ have MLEs. When $\rho_{i,j} = \rho_i + \rho_j$, $3150$ different observable sufficient statistics.
\item  $n=5$. \\
The sample space consists of  $4^{10} = 1,048,576$ different networks. If $\rho_{i,j}=0$, there are $225,025$ different sufficient statistics, while if $\rho_{i,j} = \rho \neq 0$ or $\rho_{i,j} = \rho_i + \rho_j$, this number is $349,500$. Even though $n$ is very small, this case is already computationally prohibitive!
\end{enumerate}

\subsubsection{The case $\rho = 0$}
When $\rho = 0$, nonexistence of the MLE can be more easily detected. Even though this is the simplest of $p_1$ models, the observations below apply to more complex $p_1$ models as well, since nonexistence of the MLE for he case $\rho = 0$ implies nonexistence of the MLE in all the other cases.

Only for this case, it is more convenient to switch to the parametrization originally used in \cite{holl:lein:1981}, and describe networks using incidence matrices. Thus, each network on $n$ nodes can be represented as a $n \times n$ matrix with $0/1$ off-diagonal entries, where the $(i,j)$ entry is $1$ is there is an edge from $i$ to $j$ and $0$ otherwise. 
In this parametrization, the sufficient statistics for the parameters $\{ \alpha_i, i=1,\ldots,n\}$ and $\{ \beta_j, j=1,\ldots,n\}$ are the row and column sums, respectively. Just like with the other $p_1$ models, the minimal sufficient statistic for the density parameter $\theta$ is the total number of edges, which is a linear function of the sufficient statistics for the other parameter; because of this, it can be ignored. 

We remark that, while this parametrization is more intuitive and parsimonious than the one we chose, whenever $\rho \neq 0$, the sufficient statistics for the reciprocity parameter are not linear functions of the observed network. As a result, the Holland and Leinhardt parametrization does not lead to toric or log-linear models.

When $\rho = 0$, there are three cases iwhere the MLE does not exist. The first two are of capable of immediate verification. %are of immediate verification:
\begin{enumerate}
\item If a row or column sum is equal to $n-1$, this implies that the either $\hat{p}_{ij}(0,1) = \hat{p}_{ij}(0,0) = 0$ for some $i$, or $\hat{p}_{ij}(1,0)  = \hat{p}_{ij}(0,0) = 0$ for some $j$.
\item If a row or column sum is equal to $0$, this implies that the either $\hat{p}_{ij}(1,0) = \hat{p}_{ij}(1,1) = 0$ for some $i$, or $\hat{p}_{ij}(0,1) = \hat{p}_{ij}(1,1) = 0$ for some $j$.
\item The last case is subtler. From the theory of exponential families, it follows that the MLE $\hat{p}$ satisfies the moment equations, namely the row and column sums of $\hat{p}$ match the corresponding row and column sums of the observed network $x$. Thus, the MLE does not exists whenever this constrained cannot be satisfied by any strictly positive vector. As a result, for $n=3$, besides the two obvious cases indicated above, the MLE does not exists if the following patterns of zeros are observed:
\[
\left[ 
\begin{array}{ccc}
\times & 0 & $\;$\\
0 & \times & $\;$\\
$\;$ & $\;$ & \times
\end{array}
\right], \quad 
\left[ 
\begin{array}{ccc}
\times & $\;$ & 0\\
$\;$ & \times & $\;$\\
0 & $\;$ & \times
\end{array}
\right], \quad
\left[ 
\begin{array}{ccc}
\times & $\;$ & $\;$\\
$\;$ & \times & 0\\
$\;$ 0 & & \times
\end{array}
\right].
\]

When $n=4$, there are 4 patterns of zeros leading to a nonexistent MLE, even though the margins can be positive and smaller than $3$:
\[
\left[ 
\begin{array}{cccc}
\times & 0 & $\;$ & 0\\
0 & \times & $\;$ & 0\\
$\;$ & $\;$ & \times & $\;$\\
0 & 0 & $\;$ & \times\\
\end{array}
\right], \quad 
\left[ 
\begin{array}{cccc}
\times & 0 & 0 & $\;$\\
0 & \times & 0 & $\;$\\
0 & 0 & \times & $\;$\\
$\;$ & $\;$ & $\;$ & \times\\
\end{array}
\right],\quad 
\left[ 
\begin{array}{cccc}
\times & $\;$ & 0 & 0\\
$\;$ & \times & $\;$ & $\;$\\
0 & $\;$ & \times & 0\\
0 & $\;$ & 0 & \times\\
\end{array}
\right], \quad
\left[ 
\begin{array}{cccc}
\times & $\;$ & $\;$ & $\;$\\
$\;$ & \times & 0 & 0\\
$\;$ & 0 & \times & 0\\
$\;$ & 0 & 0 & \times\\
\end{array}
\right].
\]

These patterns have been found using {\tt polymake} in the following way. Represent the off-diagonal entries of the $n \times n$ incidence matrix matrix $x$ as a $n(n-1)$-dimensional vector, with the coordinates ordered, say, %in some fashion, for example
 lexicographically. Let $B$ be a $2n \times n(n-1)$ matrix such that $Bx$ is a vector of the row and column sums of the original matrix. For example, when $n=3$, 
 \begin{align*}
   B = \begin{bmatrix} 
     1  &  0    &  1  &  0    &  0  &  0 \\
     0  &  1    &  0  &  0    &  1  &  0  \\
     0  &  0    &  0  &  1   &  0  &  1  \\
     0  &  1    &  0  &  1    &  0  &  0 \\
     1  &  0    &  0  &  0    &  0  &  1  \\
     0  &  0    &  1  &  0    &  1  &  0  
  \end{bmatrix},
\end{align*}
where its columns correspond to the coordinates $(1,2)$, $(1,3)$, $(2,1)$, $(2,3)$, $(3,1)$ and $(3,2)$.
As the columns of $B$ are affinely independent,  in order to determine the patterns of zeros leading to a nonexistent MLE, it is sufficient to look at the faces of the cone $\mathrm{cone}(B)$. Based on our computations, which we carried out for networks of size up to $n=10$, for a network on $n$ nodes, there are $3n$ facets, $2n$ of which correspond to patterns of zeros leading to a zero row or column margin, and the remaining $n$ to patterns of zeros which cause nonexistent MLE without inducing zero margins, like the ones presented above. 
\end{enumerate} 

Finally, we note that the matrix $B$ described can also be obtained as a submatrix of the design matrix $\mathcal{Z}_n$ after removing the rows corresponding to the parameters $\lambda_{ij}$'s and the columns corresponding to the indeterminates $p_{ij}(1,1)$, since those columns are simply the sum of the two columns corresponding to the indeterminates $p_{ij}(1,0)$ and $p_{ij}(0,1)$, for all $i \neq j$. 
 %that's a loooong "enumerate"! 
Referring back to Section \ref{sec:markov}, we notice that we have seen the matrix $B$ before.  In fact, $B=\A_n$, what we referred to as the \emph{common submatrix} of each of the three version of the $p_1$ model. 
In Theorems \ref{thm:simplified-noRho-decomposition} and \ref{thm:simplified-EdgeRho-decomposition}, we have seen that the Markov basis of the toric ideal of this matrix essentially determines the Markov bases for the simplified models. 
More importantly, in Theorem \ref{thm:essentialMarkovForModel}, it plays a crucial role in determining the essential Markov moves for the model.

\smallskip

\section{Discussion}

We close with a  discussion   about the relationship between our parametrization and the log-linear parametrization suggested by \cite{fien:wass:1981,fien:wass:1981a}.
Fienberg and Wasserman's parametrization of the $p_1$ model encodes it as a $n^2 \times 2 \times 2$ contingency table, where the first variable corresponds to a dyad and the second and third represent the four dyadic configurations. Thus, a network is represented as a point $x$ in $\mathbb{R}^{4n^2}$ with 0/1 entries. 
This log-linear parametrization is clearly highly  redundant, as, besides the Multinomial constraints on each of the $n^2$ dyads, there are additional symmetric constraints of the form $x_{i,j,k,l} = x_{j,i,k,l}$, for all $i,j \in \{1,\ldots,n\}$ and $k,l \in \{0,1\}$. 
Although, as shown in \cite{fien:meye:wass:1985}, these redundancies can be convenient when computing the MLE, they are highly undesirable for finding Markov bases. Indeed, the toric ideal corresponding to the this parametrization has $4n^2$ indeterminates while our parametrization only contains $2n(n-1)$.
For example, when $n=5$, this means the toric ideal lives in the polynomial ring with $100$ indeterminates, instead of $40$ that we have.  
In addition, the number of generators explodes combinatorially: for the case of constant reciprocation, $\rho_{ij}=\rho$, the ideal of the network on $n=3$ nodes has $107$ minimal generators, and the one of the $4$-node network has $80,610$.  
 The case when $n=5$ is hopeless: even {\tt 4ti2}, the fastest software available to compute generating sets of toric ideals, cannot handle it. 
%This increases the computational complexity already for small $n$ to the point where Markov bases cannot even be computed!
One wonders what all of these extra generators are! First of all, note that 
this contingency-table representation is highly redundant.  Most of the Markov basis elements are
inapplicable because of the product multinomial constraints and the symmetry constraints. 
Finally, the many symmetries in the table imply many highly non-trivial constraints that have to be accounted for when eliminating non applicable moves. 
We were able to analyze the $n=4$ case and reduce all of the $80610$ moves to the ones we get using the design matrices $\C_n$, but the effort was nontrivial. 
Therefore, at least from the point of view of studying Markov bases, the parametrization we are using  in the present paper is preferable. % to the one of \cite{fien:wass:1981a}.

Our hope is that this article motivates a deeper study of the algebraic statistical challenges for $p_1$ models and their extensions.

%\section{Relationship with Markov Bases for Log-linear Models}
%\smallskip
%\section{Discussion:  Exact Inference and Extensions}
%\begin{itemize}
%\item Markov bases for exact tests; MCMC ideas related to Diaconis and Sturmfels~\cite{diac:stur:1998}.
%\item Model choice and the goodness of fit problem.
%\item Markov bases for ERG models.
%\end{itemize}
%Much more to come.

\section*{Acknowledgements}
The authors are grateful to Di Liu for sharing her code for fitting $p_1$ models.

%%%%%%%%%%%%%%%%%%%%%%%%%%%%%%%%%%%%%%%%%%%%%%%%%%%%%%%%%%%%%%%%%%%%%%%%%%%%%%%%%%%%

\end{document}